\newcommand{\R}{{\bbR}}
\newcommand{\N}{{\mathbb N}}
\newcommand{\C}{{\mathbb C}}
\newcommand{\bbC}{{\mathbb{C}}}
\newcommand{\bbN}{{\mathbb{N}}}
\newcommand{\bbR}{{\mathbb{R}}}
\newcommand{\cA}{{\mathcal A}}
\newcommand{\cB}{{\mathcal B}}
\newcommand{\cC}{{\mathcal C}}
\newcommand{\cD}{{\mathcal D}}
\newcommand{\cH}{{\mathcal H}}
\newcommand{\cK}{{\mathcal K}}
\newcommand{\cP}{{\mathcal P}}
\newcommand{\cR}{{\mathcal R}}
\DeclareMathOperator{\supp}{supp}
\DeclareMathOperator{\dom}{dom}
\DeclareMathOperator{\tr}{tr}
\DeclareMathOperator*{\slim}{s-lim}
\renewcommand{\Im}{\text{\rm Im}}
\renewcommand{\ln}{\text{\rm ln}}
\newcommand{\loc}{\text{\rm{loc}}}
\newcommand{\beq}{\begin{equation}}
\newcommand{\enq}{\end{equation}}
\newcommand{\sgn}{{\textrm{sgn}}}
\newcommand{\no}{\notag}
\newcommand{\lb}{\label}
\newcommand{\f}{\frac}
\newcommand{\ol}{\overline}
\newcommand{\wti}{\widetilde}
\newcommand{\hatt}{\widehat}
\newcommand{\bi}{\bibitem}
\let\geq\geqslant
\let\leq\leqslant
\def\theequation{\@arabic\c@equation}
\numberwithin{equation}{section}
\newtheorem{theorem}{Theorem}[section]
\newtheorem{lemma}[theorem]{Lemma}
\newtheorem{corollary}[theorem]{Corollary}
\newtheorem{hypothesis}[theorem]{Hypothesis}
\theoremstyle{remark}
\newtheorem{remark}[theorem]{Remark}
\newtheorem{definition}[theorem]{Definition}
\begin{document}

\title[Weak Convergence of Spectral Shift Functions]{An Abstract Approach to Weak Convergence of Spectral Shift Functions and Applications to Multi-Dimensional 
Schr\"odinger Operators}

\author[F.\ Gesztesy and R.\ Nichols]{Fritz Gesztesy and Roger Nichols}
\address{Department of Mathematics,
University of Missouri, Columbia, MO 65211, USA}
\email{gesztesyf@missouri.edu}
\urladdr{http://www.math.missouri.edu/personnel/faculty/gesztesyf.html}
\address{Department of Mathematics,
University of Missouri, Columbia, MO 65211, USA}
\email{nicholsrog@missouri.edu}

\dedicatory{Dedicated to the memory of Nigel J.\ Kalton (1946--2010)}
\date{\today}
\subjclass[2010]{Primary 35J10, 35P25, 47A40, 47B10; 
Secondary 34L40, 47E05, 47F05, 47N50.}
\keywords{Spectral shift functions, Fredholm determinants, multi-dimensional 
Schr\"odinger operators.}

\begin{abstract} 
We study the manner in which a sequence of spectral shift functions $\xi(\cdot;H_j,H_{0,j})$ 
associated with abstract pairs of 
self-adjoint operators $(H_j, H_{0,j})$ in Hilbert spaces $\cH_j$, $j\in\bbN$, converge to a limiting spectral shift function $\xi(\cdot;H,H_0)$ associated with a pair $(H,H_0)$ in the limiting Hilbert space $\cH$ as $j\to\infty$ (mimicking the infinite volume limit in concrete applications to multi-dimensional Schr\"odinger operators). 

Our techniques rely on a Fredholm determinant approach combined with certain measure theoretic facts. In particular, we show that prior vague convergence results for spectral 
shift functions in the literature actually extend to the notion of weak convergence. More precisely, in the concrete case of multi-dimensional Schr\"odinger operators on a 
sequence of domains $\Omega_j$ exhausting $\bbR^n$ as $j\to\infty$, we extend the convergence of associated spectral shift functions from vague to weak convergence and also from Dirichlet boundary conditions to more general self-adjoint boundary conditions 
on $\partial\Omega_j$. 
\end{abstract}

\maketitle

\section{Introduction}  \lb{s1}

We are interested in the manner in which a sequence of spectral shift functions for 
abstract pairs of self-adjoint operators $(H_j, H_{0,j})$ in 
Hilbert spaces $\cH_j$, $j\in\bbN$, converge to a limiting spectral shift function 
associated with a pair $(H,H_0)$ in a limiting 
Hilbert space $\cH$ as $j \to\infty$ (mimicking the infinite volume limit in concrete 
situations). As a concrete application we explicitly treat the case of multi-dimensional 
Schr\"odinger on bounded domains $\Omega_j \subset \bbR^n$, $n\in\bbN$, exhausting 
$\bbR^n$ as $j\to\infty$, with various boundary conditions on $\partial\Omega_j$, 
$j\in\bbN$ (we primarily focus on the cases $1 \leq n \leq 3$).  

An exhaustive treatment of the special one-dimensional case appeared in \cite{GN11}. 

Before we focus on the abstract situation discussed in this paper, it is 
appropriate to briefly survey the known results in this area. Consider self-adjoint 
Schr\"odinger operators $H_j$ and $H_{0,j}$ 
in $L^2((-j,j)^n; d^n x)$, $n\in\bbN$, $n\geq 2$, generated 
by the differential expression $-\Delta + V$ and $-\Delta$ on $(-j,j)^n$, respectively, 
with Dirichlet boundary conditions on $\partial (-j,j)^n$, where 
$0 \leq V \in L^\infty(\bbR^n; d^nx)$ is of fixed compact support in $(-j,j)^n$, 
real-valued, and nonzero a.e.\ Denoting by $\xi\big(\lambda; H_j,H_{0,j}\big)$ for 
a.e.\ $\lambda \in \R$, the spectral shift function associated with the pair 
$(H_j, H_{0,j})$ (cf.\ \cite[Ch.\ 8]{Ya92}), normalized to be zero in a neighborhood of 
$-\infty$, Kirsch \cite{Ki87} showed in 1987 that (perhaps, somewhat unexpectedly) for any $\lambda > 0$, 
\begin{equation}
\sup_{j\in\bbN} \big|\xi\big(\lambda; H_j,H_{0,j}\big)\big| = \infty.    \lb{1.1}
\end{equation}
Moreover, denoting by $H$ and $H_0$ the corresponding self-adjoint Schr\"odinger operators in $L^2(\bbR^n; d^n x)$ generated by the differential expression 
$-\Delta + V$ and $-\Delta$ on $\R^n$, respectively, one cannot expect pointwise 
a.e.\ convergence (or convergence in measure) 
of $\xi\big(\cdot; H_j,H_{0,j}\big)$ to $\xi\big(\lambda; H,H_0\big)$ in the infinite volume limit $j\to\infty$ by the following elementary argument: For a.e.\ $\lambda > 0$, 
$\xi\big(\lambda; H,H_0\big)$ is a continuous function with respect to $\lambda$, 
related to the determinant of the underlying fixed energy scattering matrix. Yet 
$\xi\big(\cdot; H_j,H_{0,j}\big)$, as a difference of eigenvalue counting functions corresponding to the number of eigenvalues (counting multiplicity) of $H_j$ and 
$H_{0,j}$, respectively, is integer-valued and hence cannot possibly converge to a non-constant continuous function as $j\to\infty$. In particular, this argument applies to the 
one-dimensional context (in which case $\xi\big(\lambda; H,H_0\big) \to 0$ as 
$\lambda\to\infty$).

Having ruled out pointwise a.e.\ convergence of spectral shift functions in the infinite 
volume limit $j\to\infty$ in all space dimensions, it becomes clear that one has to invoke the concept of certain  generalized limits. Indeed, in 1995, Geisler, Kostrykin, and 
Schrader \cite{GKS95} proved for potentials $V \in \ell^1(L^2(\bbR^3; d^3x))$ (a 
Birman--Solomyak space, cf., e.g., \cite[Ch.\ 4]{Si05}) that for all $\lambda \in \R$,
\begin{equation}
\lim_{j\to\infty} \int_{(-\infty,\lambda]} \xi\big(\lambda'; H_j,H_{0,j}\big) d\lambda'
= \int_{(-\infty,\lambda]} \xi\big(\lambda'; H,H_0\big) d\lambda'.    \lb{1.2}
\end{equation}
Since $H_j$ and $H_{0,j}$ are bounded from below uniformly with respect to 
$j\in\bbN$, the limiting relation \eqref{1.2} involving distribution functions of the 
spectral shift measures is equivalent to vague convergence of the latter as observed in 
\cite[Prop.\ 4.3]{HLMW01}.  

In the one-dimensional half-line context, Borovyk and Makarov \cite{BM09} (see also 
Borovyk \cite{Bo08}) proved in 2009 that for potentials $V \in L^1((0,\infty);(1+ |x|)dx)$ 
real-valued, and denoting by $H_R$ the self-adjoint Schr\"odinger operator in 
$L^2((0,R);dx)$ and $H$ the corresponding self-adjoint Schr\"odinger operator in 
$L^2((0,\infty);dx)$, both with Dirichlet boundary conditions (and otherwise maximally defined or defined in terms of quadratic forms), and analogously for $H_{0,R}$ and 
$H_0$ in the unperturbed case $V=0$, the following vague limit holds: 
\begin{align}
\begin{split}
& \, \text{For any $g \in C_0(\bbR)$,} \\ 
& \lim_{R\to\infty} \int_{\R} \xi\big(\lambda; H_R,H_{0,R}\big) d\lambda \, g(\lambda)
= \int_{\R} \xi\big(\lambda; H,H_0\big) d\lambda \, g(\lambda).   \lb{1.3}
\end{split} 
\end{align}
In addition, they proved that the following Ces{\` a}ro limit, 
\begin{equation}
\lim_{R\to\infty} \f{1}{R} \int_0^R \xi\big(\lambda; H_r,H_{0,r}\big) dr = 
\xi\big(\lambda; H,H_0\big), \quad 
\lambda \in \bbR\backslash\big(\sigma_{\rm d} (H) \cup \{0\}\big)    \lb{1.4}
\end{equation}
exists (and the limit in \eqref{1.4} extends to $\lambda = 0$ if $H$ has no zero energy resonance). 

Returning to the case of multi-dimensional boxes $[-R,R]^n$, Hislop and 
M\"uller \cite{HM10} (see also \cite{HM10a}) proved a result going somewhat beyond 
vague convergence in 2010. More precisely, assuming a real-valued background 
potential $V^{(0)}$ satisfying $V_-^{(0)} \in K(\R^n)$, $V_+^{(0)} \in K_{\rm loc}(\R^n)$ 
and a potential $0 \leq V \in K_{\rm loc}(\R^n)$, $\supp (V)$\,compact (cf.\ \cite{AS82}, 
\cite{Si82} for the definition of (local) Kato classes), they show that 
\begin{align}
& \, \text{For any $f \in C_0(\bbR)$, and for any $f = \chi_{J}$, $J\subset\R$ a finite interval,}    \no \\ 
& \lim_{R\to\infty} \int_{\R} 
\xi\big(\lambda; H_{0,R} + V^{(0)} +V,H_{0,R}+ V^{(0)}\big) d\lambda \, f(\lambda)  
\lb{1.5} \\
& \quad = 
\int_{\R} \xi\big(\lambda; H_0 + V^{(0)} +V,H_0+ V^{(0)}\big) d\lambda \, f(\lambda).   \no
\end{align}
In addition, they derived a weaker version than the Ces\'aro limit in (1.4) in the multi-dimensional context. More precisely, they proved that for every sequence of lengths 
$\{L_j\}_{j\in\N} \subset (0,\infty)$ with $\lim_{j\to\infty} L_j = \infty$, there exists a subsequence $\{j_k\}_{k\in\N} \subset \N$ with
$\lim_{k\to\infty} j_k = \infty$, such that for every subsequence 
$\{k_{\ell}\}_{\ell\in\N} \subset \N$ with $\lim_{\ell\to\infty} k_{\ell} = \infty$,
\begin{align}
\begin{split} 
& \lim_{L\to\infty} \f{1}{L} \sum_{\ell=1}^L 
\xi\big(\lambda; H_{L_{j_{k_\ell}}}^{(0)} + V^{(0)} +V, 
H_{L_{j_{k_\ell}}}^{(0)} + V^{(0)}\big)    \lb{1.6} \\
& \quad \leq \xi\big(\lambda; H_0 + V^{(0)} +V,H_0 + V^{(0)}\big)
\end{split} 
\end{align}
for (Lebesgue) a.e.\ $\lambda \in \R$.

Before describing our results we should mention that spectral shift functions feature prominently in the context of eigenvalue counting functions and hence in the 
context of the integrated density of states. We refer, for instance, to \cite{CHKN02}, 
\cite{CHN01}, \cite{HK02}, \cite{HKNSV06}, \cite{HS02}, \cite{Ko99}, \cite{KS99}, 
\cite{KS00}, \cite{Na01}, and the references cited therein. For bounds on the 
spectral shift function we refer to \cite{CHN01}, \cite{HKNSV06}, \cite{HS02}, and 
\cite{So93}. 

In Section \ref{s2} we collect basic properties of spectral shift functions used in the bulk of this paper. In Section \ref{s3} we prove our principal abstract result, the convergence of a sequence of spectral shift functions $\xi(\cdot;H_j,H_{0,j})$ associated with pairs of self-adjoint operators $(H_j, H_{0,j})$ in Hilbert spaces $\cH_j$, $j\in\bbN$, to the limiting 
spectral shift function $\xi(\cdot;H,H_0)$ associated with the pair $(H,H_0)$ in a limiting 
Hilbert space $\cH$ as $j \to\infty$, thus mimicking the infinite volume limit in concrete 
situations. Finally, in Section \ref{s4} we provide detailed applications to Schr\"odinger 
operators in dimensions $n =1,2,3$ in the case of Dirichlet boundary conditions and sketch 
extensions to higher dimensions $n \geq 4$ and Robin boundary conditions.  
 
Finally, we briefly summarize some of the notation used in this paper: Let $\cH$ be a
separable complex Hilbert space, $(\cdot,\cdot)_{\cH}$ the scalar product in $\cH$
(linear in the second argument), and $I_{\cH}$ the identity operator in $\cH$.
Next, let $T$ be a linear operator mapping (a subspace of) a
Hilbert space into another, with $\dom(T)$ and $\ker(T)$ denoting the
domain and kernel (i.e., null space) of $T$. 
The closure of a closable operator $S$ is denoted by $\ol S$. 
The resolvent set, spectrum, essential spectrum, discrete spectrum, and resolvent set 
of a closed linear operator in $\cH$ will be denoted by $\rho(\cdot)$, $\sigma(\cdot)$, $\sigma_{\rm ess}(\cdot)$, $\sigma_{\rm d}(\cdot)$, and $\rho(\cdot)$, respectively. 
The Banach space of bounded (resp., compact) linear operators on $\cH$ is
denoted by $\cB(\cH)$ (resp., $\cB_{\infty}(\cH)$). The corresponding 
$\ell^p$-based trace ideals will be denoted by $\cB_p (\cH)$, $p>0$. 
The trace of trace class operators in $\cH$ is denoted by ${\tr}_{\cH}(\cdot)$, modified Fredholm determinants are abbreviated by ${\det}_{p,\cH}(I_{\cH} + \cdot)$, 
$p\in\bbN$ (the subscript $p$ being omitted in the trace class case $p=1$).

The form sum (resp.\ difference) of two self-adjoint operators $A$ and $W$ will be denoted by $A +_q W$ (resp., $A -_q W = A +_q (-W)$).

\section{Basic Facts on Spectral Shift Functions}  \lb{s2}

In this preparatory section we succinctly summarize properties of the spectral shift function as needed in the bulk of this paper (for details on this material we refer to \cite{BY93}, 
\cite[Ch.\ 8]{Ya92}, \cite{Ya07}, \cite[Sect.\ 0.9, Chs.\ 4, 5, 9]{Ya10}).

We start with the following basic assumptions:

\begin{hypothesis} \lb{h2.1}
Suppose $A$ and $B$ are self-adjoint operators in $\cH$ with $A$ bounded from below. \\
$(i)$ Assume that $B$ can be written as the form sum of $A$ and 
a self-adjoint operator $W$ in $\cH$
\begin{equation}
B = A +_q W,     \lb{2.1}
\end{equation}
where $W$ can be factorized into
\begin{equation}
W = W_1^* W_2,    \lb{2.2}
\end{equation}
such that 
\begin{equation}
\dom(W_j) \supseteq \dom\big(|A|^{1/2}\big), \quad j =1,2,   \lb{2.2a}
\end{equation}
$(ii)$ Suppose that for some $($and hence for all\,$)$ $z \in \rho(A)$,
\begin{equation}
\ol{W_2 (A - z I_{\cH})^{-1} W_1^*} \in \cB_1(\cH),    \lb{2.3}
\end{equation}
and that 
\begin{equation}
\lim_{z \downarrow -\infty} \big\|\ol{W_2 (A - z I_{\cH})^{-1} W_1^*}\big\|_{\cB_1(\cH)} = 0. 
\lb{2.3a}
\end{equation}
$(iii)$ In addition, we suppose that for some $($and hence for all\,$)$ 
$z \in \rho(B) \cap \rho(A)$,
\begin{equation}
\big[(B - z I_{\cH})^{-1} - (A - z I_{\cH})^{-1}\big] \in \cB_1(\cH).  \lb{2.4}
\end{equation}
\end{hypothesis}

\medskip

Extensions of Hypothesis \ref{h2.1} where $\cB_1(\cH)$ in \eqref{2.3} is replaced by 
$\cB_p(\cH)$ for some $p\in\bbN$, and/or the resolvents in \eqref{2.4} will be replaced by 
higher powers of resolvents, will be discussed a bit later.

Given Hypothesis \ref{h2.1}\,$(i)$, one observes that 
\begin{equation}
\dom\big(|B|^{1/2}\big) = \dom\big(|A|^{1/2}\big),    \lb{2.5} 
\end{equation}
and that the resolvent of $B$ can be written as (cf., e.g., the detailed discussion in 
\cite{GLMZ05} and the references therein)
\begin{align}
(B - z I_{\cH})^{-1} &= (A - z I_{\cH})^{-1}    \no \\ 
& \quad - \ol{(A - z I_{\cH})^{-1} W_1^*} 
\big[I_{\cH} + \ol{W_2 (A - z I_{\cH})^{-1} W_1^*}\big]^{-1} W_2 (A - z I_{\cH})^{-1}, 
\no\\
& \hspace*{6.7cm} z \in \rho(B) \cap \rho(A).    \lb{2.6} 
\end{align}
In particular, $B$ is bounded from below in $\cH$.

Moreover, assuming the full Hypothesis \ref{h2.1} one infers that (cf.\ \cite{GZ11}) 
\begin{align}
& {\tr}_{\cH}\big((B - z I_{\cH})^{-1} - (A - z I_{\cH})^{-1}\big)   \no \\
& \quad 
= - \f{d}{dz} \ln\Big({\det}_{\cH}\Big(\ol{(B - z I_{\cH})^{1/2}(A - z I_{\cH})^{-1}
	(B - z I_{\cH})^{1/2}}\Big)\Big)    \no \\
&\quad =  - \f{d}{dz} \ln\Big({\det}_{\cH}\Big(I_{\cH} +\ol{W_2 (A - z I_{\cH})^{-1} 
W_1^*}\Big)\Big), \quad z \in \rho(B) \cap \rho(A).     \lb{2.7}
\end{align}

In addition, Hypothesis \ref{h2.1} guarantees the existence of the real-valued spectral shift function $\xi(\cdot; B,A)$ satisfying
\begin{align}
{\tr}_{\cH}\big((B - z I_{\cH})^{-1} - (A - z I_{\cH})^{-1}\big) 
= - \int_{\bbR} \f{\xi(\lambda; B,A) \, d\lambda}{(\lambda - z)^2},  \quad 
z \in \rho(B) \cap \rho(A)),        \lb{2.8}  
\end{align}
and 
\begin{align}
& \xi(\lambda; B,A) = 0, \quad \lambda < \inf(\sigma(B), \sigma(A)),    \lb{2.9} \\
& \xi(\cdot; B,A) \in L^1\big(\bbR; (\lambda^2 + 1)^{-1} d\lambda\big). 
\lb{2.10}
\end{align}
Moreover, for a large class of functions $f$ (e.g., any $f$ s.t.\ 
$\hatt f (\cdot) \in L^1(\bbR; (|p| + 1) dp)$) one infers that 
$[f(B) - f(A)] \in \cB_1(\cH)$ and 
\begin{equation}
{\tr}_{\cH} (f(B) - f(A)) = \int_{\bbR} f'(\lambda) \, \xi(\lambda;B,A) \, d\lambda. 
\lb{2.11}
\end{equation}
This applies, in particular, to powers of the resolvent, where 
$f(\cdot) = (\cdot -z)^{-n}$, $n \in \bbN$, and we refer to \cite[Ch.\ 8]{Ya92} 
for more details. 

Throughout this manuscript we assume that the normalization \eqref{2.9} is applied. 

For subsequent purpose we summarize the results \eqref{2.7}, \eqref{2.8} as 
\begin{align}
& {\tr}_{\cH}\big((B - z I_{\cH})^{-1} - (A - z I_{\cH})^{-1}\big) 
= - \int_{\bbR} \f{\xi(\lambda; B,A) \, d\lambda}{(\lambda - z)^2}        \no \\
&\quad =  - \f{d}{dz} \ln\Big({\det}_{\cH}\Big(I_{\cH} +\ol{W_2 (A - z I_{\cH})^{-1} 
W_1^*}\Big)\Big), \quad z \in \rho(B) \cap \rho(A).     \lb{2.11A}
\end{align}
Here ${\det}_{\cH}(\cdot)$ denotes the Fredholm determinant (cf.\ \cite[Ch.\ IV]{GK69},
\cite{Si77}, \cite[Ch.\ 3]{Si05}). 

We also note the following monotonicity result: If 
\begin{align}
\begin{split}
& B \geq A \, \text{ (resp., $B \leq A$) in the sense of quadratic forms, then } \\
& \quad \xi(\lambda; B,A) \geq 0 \, \text{ (resp., $\xi(\lambda; B,A) \leq 0$).}
\lb{2.11a}
\end{split} 
\end{align}
Here, $B \geq A$ is meant in the sense of quadratic forms, that is, 
\begin{align}
& \dom \big(|A|^{1/2}\big) \supseteq \dom \big(|B|^{1/2}\big) \, \text{ and } 
\lb{2.11b} \\ 
& \quad \big(|B|^{1/2} f, \sgn(B) |B|^{1/2} f\big)_{\cH} \geq 
\big(|A|^{1/2} f, \sgn(A) |A|^{1/2} f\big)_{\cH}, \quad 
f \in \dom \big(|B|^{1/2}\big).    \no 
\end{align}

Next, suppose that the self-adjoint operator $C$ in $\cH$ can be written as the form sum of $B$ and a self-adjoint operator $Q$ in $\cH$, $C = B +_q Q$, where $Q$ can be factored as $Q=Q_1 Q_2$, with $Q, Q_1, Q_2$ satisfying the assumptions of $W, W_1, W_2$ in Hypotheses \ref{h2.1}. Then the formula
\begin{equation}
\xi(\lambda;C,A) = \xi(\lambda;C,B) + \xi(\lambda;B,A) \, 
\text{ for a.e.\ $\lambda \in \bbR$,}     \lb{2.11c}
\end{equation}
holds. 

Finally, we mention the connection between $\xi(\cdot; B,A)$ and the Fredholm determinant in \eqref{2.7},
\begin{equation}
\xi(\lambda; B,A) = \pi^{-1} \lim_{\varepsilon\downarrow 0} \Im\Big(\ln\Big(I_{\cH}
+ \ol{W_2 (A - (\lambda + i \varepsilon) I_{\cH})^{-1} W_1^*}\Big)\Big) \, 
\text{ for a.e.\ $\lambda \in \bbR$},    \lb{2.12}
\end{equation}
choosing the branch of $\ln({\det}_{\cH}(\cdot))$ on $\bbC_+$ such that 
\begin{equation}
\lim_{|\Im(z)| \to \infty} \ln\Big({\det}_{\cH}\Big(I_{\cH} 
+ \ol{W_2 (A - z I_{\cH})^{-1} W_1^*}\Big)\Big) = 0.    \lb{2.13} 
\end{equation}

For applications to multi-dimensional Schr\"odinger operators the framework in 
Hypothesis \ref{h2.1} is not sufficiently general and the trace class assumption, 
\eqref{2.3}, needs to be replaced by a weaker Hilbert--Schmidt-type  
hypothesis as detailed next: 

\begin{hypothesis} \lb{h2.2}
Suppose the assumptions made in Hypothesis \ref{h2.1} with the exception of the trace class hypothesis \eqref{2.3}. \\ 
$(iv)$ Assume that for some $($and hence for all\,$)$ $z \in \rho(A)$,
\begin{equation}
\ol{W_2 (A - z I_{\cH})^{-1} W_1^*} \in \cB_2(\cH).   \lb{2.19}
\end{equation}
and that 
\begin{equation}
\lim_{z \downarrow -\infty} \big\|\ol{W_2 (A - z I_{\cH})^{-1} W_1^*}\big\|_{\cB_2(\cH)} = 0. 
\lb{2.20}
\end{equation}
$(v)$ Suppose that
\begin{equation}
{\tr}_{\cH} \big(\ol{(A - z I_{\cH})^{-1} W (A - z I_{\cH})^{-1}}\big) = 
\eta' (z), \quad z \in \rho(A),    \lb{2.21}
\end{equation} 
where $\eta(\cdot)$ has normal limits, denoted by $\eta(\lambda + i0)$, 
for a.e.\ $\lambda \in \bbR$.
\end{hypothesis}

Then \eqref{2.5}, \eqref{2.6}, \eqref{2.8}--\eqref{2.11c} remain valid, but \eqref{2.7}, 
\eqref{2.12}, and \eqref{2.13} need to be amended as follows:

\begin{theorem} \lb{t2.3}
Assume Hypothesis \ref{h2.2}. Then
\begin{align}
\begin{split} 
\xi(\lambda; B, A) &= \pi^{-1} \Im\big(\ln\big(
{\det}_{2,\cH}\big(I_{\cH} + \ol{W_2 (A - (\lambda + i 0) I_{\cH})^{-1} W_1^*}\big)\big)\big)   \lb{2.21a} \\
& \quad + \pi^{-1} \Im(\eta(\lambda + i0)) + c \, \text{ for a.e.\ $\lambda \in \bbR$.} 
\end{split} 
\end{align}
Here $c\in\bbR$ has to be chosen in accordance with the normalization \eqref{2.9}. 
\end{theorem}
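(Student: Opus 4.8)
The plan is to \emph{amend} the Fredholm-determinant trace formula \eqref{2.7} to the Hilbert--Schmidt setting of Hypothesis \ref{h2.2} and then compare the result with the (still valid) spectral shift representation \eqref{2.8}; taking imaginary parts of normal boundary values will produce \eqref{2.21a}. Throughout, abbreviate $K(z) := \ol{W_2 (A - z I_{\cH})^{-1} W_1^*} \in \cB_2(\cH)$, $z \in \rho(A)$. The principal step is to prove the Hilbert--Schmidt analogue of \eqref{2.7},
\begin{equation}
{\tr}_{\cH}\big((B - z I_{\cH})^{-1} - (A - z I_{\cH})^{-1}\big)
= - \f{d}{dz}\big[\ln\big({\det}_{2,\cH}(I_{\cH} + K(z))\big) + \eta(z)\big], \quad z \in \rho(B) \cap \rho(A).
\lb{Pone}
\end{equation}
Heuristically, \eqref{Pone} is dictated by \eqref{2.7} together with the relation ${\det}_{\cH}(I_{\cH} + K) = {\det}_{2,\cH}(I_{\cH} + K)\, e^{-{\tr}_{\cH}(K)}$ and by the fact that, in view of \eqref{2.21}, $\eta'(z) = {\tr}_{\cH}\big(\ol{(A - z I_{\cH})^{-1} W (A - z I_{\cH})^{-1}}\big)$ plays the role of the (otherwise meaningless, since $K(z)$ is only Hilbert--Schmidt) quantity $\f{d}{dz}{\tr}_{\cH}(K(z))$.

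To establish \eqref{Pone}, start from the resolvent identity \eqref{2.6}; the left-hand side is trace class by \eqref{2.4}, and cyclicity of the trace brings it into the form $- {\tr}_{\cH}\big((I_{\cH} + K(z))^{-1} K'(z)\big)$, where $K'(z) := \ol{W_2 (A - z I_{\cH})^{-2} W_1^*}$. Writing $(I_{\cH} + K(z))^{-1} = I_{\cH} - (I_{\cH} + K(z))^{-1} K(z)$ splits this into $- {\tr}_{\cH}(K'(z)) = -\eta'(z)$ (again using \eqref{2.21} and cyclicity) plus a remainder equal to $- \f{d}{dz}\ln\big({\det}_{2,\cH}(I_{\cH} + K(z))\big)$ by the standard differentiation formula for the $2$-modified Fredholm determinant. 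I expect this to be the main obstacle: one must carefully justify that $z \mapsto K(z)$ is differentiable as a $\cB_2(\cH)$-valued map with $K'(z) \in \cB_2(\cH)$, that the cyclic rearrangements of the trace are legitimate even though only Hilbert--Schmidt (not trace class) factors appear individually, and that the intermediate operators that arise are genuinely trace class. It is precisely Hypothesis \ref{h2.2}\,$(v)$, i.e.\ \eqref{2.21}, providing the trace class operator $\ol{(A - z I_{\cH})^{-1} W (A - z I_{\cH})^{-1}}$, that makes this bookkeeping work and that forces the correction term $\eta$ into \eqref{2.21a}.

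Granting \eqref{Pone}, compare it with \eqref{2.8}; since both express ${\tr}_{\cH}\big((B - z I_{\cH})^{-1} - (A - z I_{\cH})^{-1}\big)$, one obtains
\[
\f{d}{dz}\big[\ln\big({\det}_{2,\cH}(I_{\cH} + K(z))\big) + \eta(z)\big] = \int_{\bbR} \f{\xi(\lambda; B,A) \, d\lambda}{(\lambda - z)^2}, \quad z \in \rho(B) \cap \rho(A).
\]
Because $K(\cdot)$ is $\cB_2(\cH)$-analytic on $\rho(A) \supseteq \bbC_+$, $(I_{\cH} + K(z))$ is boundedly invertible on $\rho(B) \cap \rho(A) \supseteq \bbC_+$ by \eqref{2.6}, and $\bbC_+$ is simply connected, the function $F(z) := \ln\big({\det}_{2,\cH}(I_{\cH} + K(z))\big) + \eta(z)$ (with a fixed continuous branch of $\ln$ normalized via \eqref{2.20}) is well defined and analytic on $\bbC_+$; fixing any $z_0 \in \bbC_+$ and integrating the displayed identity yields
\[
F(z) = F(z_0) + \int_{\bbR} \xi(\lambda; B,A) \Big[ \f{1}{\lambda - z} - \f{1}{\lambda - z_0}\Big] d\lambda, \quad z \in \bbC_+,
\]
the integral converging absolutely by \eqref{2.10}. (Alternatively one may first work on the ray $\big(-\infty, \min(\inf\sigma(A), \inf\sigma(B))\big)$, where $A - z I_{\cH}$ and $B - z I_{\cH}$ are positive and all relevant square roots exist, and then continue analytically; \eqref{2.20} then helps to evaluate part of the constant.)

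Finally, take imaginary parts and pass to normal boundary values $z = \lambda + i\e \to \lambda$, $\e \downarrow 0$. The normal limit $\eta(\lambda + i0)$ exists for a.e.\ $\lambda \in \bbR$ by Hypothesis \ref{h2.2}\,$(v)$; the quantity $\Im\big(\int_{\bbR} \xi(\lambda'; B,A) (\lambda' - z_0)^{-1} d\lambda'\big)$ is a $\lambda$-independent real constant; and since $\xi(\cdot; B,A) \in L^1\big(\bbR; (\lambda^2 + 1)^{-1} d\lambda\big)$ by \eqref{2.10}, the Fatou / Poisson-kernel theorem gives $\lim_{\e \downarrow 0} \Im\big(\int_{\bbR} \xi(\lambda'; B,A) (\lambda' - \lambda - i\e)^{-1} d\lambda'\big) = \pi\, \xi(\lambda; B,A)$ for a.e.\ $\lambda \in \bbR$. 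Hence the normal limit of $\Im\big(\ln\big({\det}_{2,\cH}(I_{\cH} + K(\lambda + i\e))\big)\big)$ exists a.e.\ and
\[
\xi(\lambda; B,A) = \pi^{-1} \Im\big(\ln\big({\det}_{2,\cH}(I_{\cH} + K(\lambda + i 0))\big)\big) + \pi^{-1} \Im(\eta(\lambda + i 0)) + c \quad \text{for a.e.\ } \lambda \in \bbR,
\]
where $c \in \bbR$ is the resulting constant, to be fixed by the normalization \eqref{2.9}; this is \eqref{2.21a}.
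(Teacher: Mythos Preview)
Your proposal is correct and follows essentially the same route as the paper. Both arguments establish the key identity
\[
{\tr}_{\cH}\big((B - z I_{\cH})^{-1} - (A - z I_{\cH})^{-1}\big)
= - \eta'(z) - \f{d}{dz}\ln\big({\det}_{2,\cH}(I_{\cH} + K(z))\big)
\]
(your \eqref{Pone}, the paper's (2.25)), integrate against the spectral shift representation \eqref{2.8}, and pass to normal boundary values; the only cosmetic differences are that the paper integrates with the Herglotz kernel $(\lambda - z)^{-1} - \lambda(\lambda^2+1)^{-1}$ and invokes the Stieltjes inversion formula, whereas you integrate from a base point $z_0 \in \bbC_+$ and invoke the Poisson/Fatou theorem, and the paper explicitly checks $C \in \bbR$ via the asymptotics \eqref{2.20}, \eqref{2.22} as $z \downarrow -\infty$ while you absorb this into the normalization constant $c$.
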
 
\begin{proof}
First, one notes that 
\begin{align}
& \big|{\tr}_{\cH} \big(\ol{(A - z I_{\cH})^{-1} W (A - z I_{\cH})^{-1}}\big)\big| 
= \big|{\tr}_{\cH} \big(\ol{W_2 (A - z I_{\cH})^{-2} W_1^*}\big)\big|   \no \\
& \quad \leq \big\|\big(\ol{(A - z I_{\cH})^{-1} W (A - z I_{\cH})^{-1}}
\big\|_{\cB_1(\cH)}   \no \\
& \quad \leq |z|^{-1} \big\|W_1 (A - {\ol z} I_{\cH})^{-1/2}\big\|_{\cB(\cH)}
\big\|W_2 (A - z I_{\cH})^{-1/2}\big\|_{\cB(\cH)}    \no \\
& \; \underset{z \downarrow - \infty}{=} C |z|^{-1}.     \lb{2.22}
\end{align}
Next, one recalls that 
\begin{align}
& (B -z I_{\cH})^{-1} - (A - z I_{\cH})^{-1} + 
(A - z I_{\cH})^{-1} W (A - z I_{\cH})^{-1}     \no \\ 
& \quad = (A - z I_{\cH})^{-1} W_1^* 
\big[I_{\cH} + \ol{W_2 (A - z I_{\cH})^{-1} W_1^*}\big]^{-1}  \no \\
& \qquad \times \ol{W_2 (A - z I_{\cH})^{-1} W_1^*} W_2 (A - z I_{\cH})^{-1}, \quad 
z \in \rho(B) \cap \rho(A),      \lb{2.23}
\end{align}
and hence (cf.\ \cite[Sect.\ 1.7]{Ya92}) that 
\begin{align}
& {\tr}_{\cH} \big((B -z I_{\cH})^{-1} - (A - z I_{\cH})^{-1} + 
\ol{(A - z I_{\cH})^{-1} W (A - z I_{\cH})^{-1}}\big)     \no \\ 
& \quad = 
{\tr}_{\cH} \Big(\big[I_{\cH} + \ol{W_2 (A - z I_{\cH})^{-1} W_1^*}\big]^{-1} 
 \ol{W_2 (A - z I_{\cH})^{-1} W_1^*}   \no \\
& \qquad \times \big[(d/dz)W_2 (A - z I_{\cH})^{-1}\big]\Big),   \no \\
 & \quad = - \f{d}{dz} 
 \ln\big({\det}_{2,\cH}\big(I_{\cH} + \ol{W_2 (A - z I_{\cH})^{-1} W_1^*}\big)\big), 
\quad   z \in \rho(B) \cap \rho(A).     \lb{2.24} 
\end{align} 
Consequently, 
\begin{align}
& {\tr}_{\cH} \big((B - z I_{\cH})^{-1} - (A - z I_{\cH})^{-1}\big) 
= - \int_{\bbR} \f{\xi(\lambda; B, A) d\lambda}{(\lambda - z)^2} \no \\
& \quad = - \f{d}{dz}  \int_{\bbR} \xi(\lambda; B, A) d\lambda 
\bigg(\f{1}{\lambda - z} - \f{\lambda}{\lambda^2 + 1}\bigg)    \no \\
& \quad = - {\tr}_{\cH} \big(\ol{(A - z I_{\cH})^{-1} W (A - z I_{\cH})^{-1}}\big)\big)  
\no \\
& \qquad 
- \f{d}{dz} \ln\big({\det}_{2,\cH}\big(I_{\cH} + \ol{W_2 (A - z I_{\cH})^{-1} W_1^*}\big),   
\no \\
& \quad = - \eta'(z) 
- \f{d}{dz} \ln\big({\det}_{2,\cH}\big(I_{\cH} + \ol{W_2 (A - z I_{\cH})^{-1} W_1^*}\big), 
\quad   z \in \rho(B) \cap \rho(A),     \lb{2.25} 
\end{align} 
and hence,
\begin{align}
& \int_{\bbR} \xi(\lambda; B, A) d\lambda 
\bigg(\f{1}{\lambda - z} - \f{\lambda}{\lambda^2 + 1}\bigg)    \no \\
& \quad = \eta (z) 
+ \ln\big({\det}_{2,\cH}\big(I_{\cH} + \ol{W_2 (A - z I_{\cH})^{-1} W_1^*}\big)\big) + C, 
\quad z \in \rho(B) \cap \rho(A),    \lb{2.26} 
\end{align} 
for some $C\in\bbC$. Taking $z<0$, $|z|$ sufficiently large, \eqref{2.20} and 
\eqref{2.22} actually yield 
\begin{equation}
C \in\bbR.
\end{equation}
Moreover, \eqref{2.26} demonstrates that 
${\det}_{2,\cH}\big(I_{\cH} + \ol{W_2 (A - z I_{\cH})^{-1} W_1^*}\big)$ has normal limits 
$z \to \lambda + i0$ for a.e.\ $\lambda \in \bbR$. 
The Stieltjes inversion formula (cf., e.g., \cite{AD56}) then yields \eqref{2.21a}. 
\end{proof} 

We note that the analog of \eqref{2.21a} was discussed in \cite[Theorems 1.59 and 1.61]{Ko99} in the concrete context of multi-dimensional Schr\"odinger operators (an additional sign-definiteness of potentials was assumed for $n \geq 4$).

For subsequent purpose we record the analog of \eqref{2.11A} (cf.\ \eqref{2.25})
\begin{align}
& {\tr}_{\cH} \big((B - z I_{\cH})^{-1} - (A - z I_{\cH})^{-1}\big) 
= - \int_{\bbR} \f{\xi(\lambda; B, A) d\lambda}{(\lambda - z)^2} \no \\ 
& \quad = - {\tr}_{\cH} \big(\ol{(A - z I_{\cH})^{-1} W (A - z I_{\cH})^{-1}}\big)\big)  
\no \\
& \qquad 
- \f{d}{dz} \ln\big({\det}_{2,\cH}\big(I_{\cH} + \ol{W_2 (A - z I_{\cH})^{-1} W_1^*}\big),   
\quad   z \in \rho(B) \cap \rho(A).      \lb{2.28} 
\end{align} 

\medskip

For pertinent literature on (modified) Fredholm determinants and associated trace 
formulas we refer, for instance, to \cite{AS68}, \cite[Ch.\ IV]{GK69}, \cite[Sect.\ 1.6]{Ko99}, 
\cite{Si77}, \cite[Ch.\ 9]{Si05}, \cite[Sect.\ 1.7]{Ya92}, \cite{Ya07}, \cite[Chs.\ 3, 9]{Ya10}.

We conclude this preparatory section by analyzing the high-energy limiting assumptions \eqref{2.3a} and \eqref{2.20}. We start by recalling the following standard convergence property for trace ideals:

\begin{lemma} \lb{l2.4}
Let $p\in[1,\infty)$ and assume that $R,R_n,T,T_n\in\cB(\cH)$, 
$n\in\bbN$, satisfy
$\slim_{n\to\infty}R_n = R$  and $\slim_{n\to \infty}T_n = T$ and that
$S,S_n\in\cB_p(\cH)$, $n\in\bbN$, satisfy 
$\lim_{n\to\infty}\|S_n-S\|_{\cB_p(\cH)}=0$.
Then $\lim_{n\to\infty}\|R_n S_n T_n^\ast - R S T^\ast\|_{\cB_p(\cH)}=0$.
\end{lemma}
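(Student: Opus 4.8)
The plan is to reduce the convergence of the triple product to two elementary facts: first, that multiplication by a strongly convergent bounded sequence is continuous from $\cB_p(\cH)$ into itself, and second, a standard "double-product'' version of this. Concretely, I would write the difference as a telescoping sum,
\begin{align}
& R_n S_n T_n^\ast - R S T^\ast    \no \\
& \quad = R_n (S_n - S) T_n^\ast + R_n S T_n^\ast - R S T^\ast,   \no \\
& \quad = R_n (S_n - S) T_n^\ast + \big(R_n S T_n^\ast - R S T_n^\ast\big) + \big(R S T_n^\ast - R S T^\ast\big),   \no
\end{align}
and then estimate the three terms separately in the $\cB_p(\cH)$ norm.

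For the first term, I would use the standard two-sided ideal bound $\|X Y Z\|_{\cB_p(\cH)} \le \|X\|_{\cB(\cH)} \|Y\|_{\cB_p(\cH)} \|Z\|_{\cB(\cH)}$ together with the uniform boundedness of $\{R_n\}$ and $\{T_n\}$ (a consequence of strong convergence via the uniform boundedness principle) to get $\|R_n (S_n - S) T_n^\ast\|_{\cB_p(\cH)} \le \big(\sup_n \|R_n\|_{\cB(\cH)}\big)\big(\sup_n \|T_n\|_{\cB(\cH)}\big) \|S_n - S\|_{\cB_p(\cH)} \to 0$. The third term, $\|R S (T_n^\ast - T^\ast)\|_{\cB_p(\cH)}$, requires the key lemma that if $K \in \cB_p(\cH)$ and $U_n \to U$ strongly then $\|K U_n^\ast - K U^\ast\|_{\cB_p(\cH)} \to 0$; equivalently, $\slim_n (T_n^\ast - T^\ast) = 0$ only weakly in general, so one cannot simply pull out an operator norm. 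Instead one argues by approximating $K = RS$ in $\cB_p$-norm by finite-rank operators $F_\varepsilon$ with $\|K - F_\varepsilon\|_{\cB_p(\cH)} < \varepsilon$; for a finite-rank $F_\varepsilon = \sum_{k=1}^{m} (\cdot, g_k)_{\cH} h_k$ one has $F_\varepsilon (U_n^\ast - U^\ast) = \sum_k (\cdot, (U_n - U) g_k)_{\cH} h_k$, whose $\cB_p$-norm is controlled by $\sum_k \|(U_n - U) g_k\|_{\cH} \|h_k\|_{\cH} \to 0$ since $U_n g_k \to U g_k$ in $\cH$ by strong convergence; combining with the uniform bound on $\{U_n\}$ handles the tail. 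The middle term $\|(R_n - R) S T_n^\ast\|_{\cB_p(\cH)}$ is handled the same way: $S T_n^\ast$ is bounded in $\cB_p$-norm uniformly in $n$ (again by the ideal inequality and uniform boundedness of $\{T_n\}$), and one applies the finite-rank approximation argument to $(R_n - R) K$ with $K \in \cB_p(\cH)$ ranging over a fixed bounded set — here one needs a slightly more careful version since $K = S T_n^\ast$ varies with $n$, so I would instead approximate $S$ itself by a finite-rank operator and absorb $R_n - R$ and $T_n^\ast$ against the two fixed bounded sequences, using $(R_n - R) \to 0$ strongly on the (finitely many) range vectors of the approximant.

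The main obstacle is the handling of strong (rather than norm) convergence of $R_n$ and $T_n$: one cannot factor the operator norm of $R_n - R$ or $T_n - T$ out of a $\cB_p$-norm, so the finite-rank approximation of the "fixed'' $\cB_p$ factor is essential, and one must be careful that in the middle term the fixed factor $S$ is sandwiched between two varying sequences. I would therefore organize the proof so that $S$ is always the object approximated in $\cB_p$-norm by finite-rank operators, reducing everything to the statement that $R_n \to R$ and $T_n \to T$ strongly, hence $R_n^{} v \to R v$ and $T_n^{} w \to T w$ in $\cH$ for the finitely many vectors $v, w$ appearing in the approximant, with the errors controlled uniformly via $\sup_n \|R_n\|_{\cB(\cH)} < \infty$ and $\sup_n \|T_n\|_{\cB(\cH)} < \infty$. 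Everything else is a routine $\varepsilon/3$ assembly of the three telescoped terms.
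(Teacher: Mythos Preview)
Your proposal is correct. The paper does not give a proof at all but simply cites Gr\"umm's theorem \cite[Theorem~1]{Gr73}, Simon \cite[pp.~28--29]{Si05}, and Yafaev \cite[Lemma~6.1.3]{Ya92}, remarking that the statement follows from these ``with a minor additional effort (taking adjoints, etc.)''. The cited results are essentially the one-sided version (if $R_n \to R$ strongly and $S_n \to S$ in $\cB_p(\cH)$, then $R_n S_n \to RS$ in $\cB_p(\cH)$), and the ``adjoints'' remark is precisely the observation you make explicit: since $(K T_n^\ast)^\ast = T_n K^\ast$ and $T_n \to T$ strongly, one gets $K T_n^\ast \to K T^\ast$ in $\cB_p(\cH)$ from the one-sided result applied to $T_n K^\ast$.

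Your argument is therefore a self-contained unpacking of what the paper invokes as a black box. The telescoping and finite-rank approximation you describe are exactly the standard proof of Gr\"umm's theorem itself, so the two routes coincide once one looks inside the citation. Your handling of the middle term is correct: the key is to approximate the fixed operator $S$ (not the $n$-dependent $S T_n^\ast$) by a finite-rank operator, so that strong convergence of $R_n$ acts on finitely many fixed vectors while the uniformly bounded $T_n^\ast$ is absorbed via the ideal inequality.
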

This follows, for instance, from \cite[Theorem 1]{Gr73}, \cite[p.\ 28--29]{Si05}, or
\cite[Lemma 6.1.3]{Ya92} with a minor additional effort (taking adjoints, etc.).

\begin{lemma} \lb{l2.5}
Let $p \in [1,\infty)$ and assume that $A$ is self-adjoint and bounded from below 
in $\cH$. Let $W_j$, $j=1,2$, be densely defined linear operators in $\cH$ satisfying
\begin{equation}
\dom(W_j) \supseteq \dom\big(|A|^{1/2}\big), \quad j =1,2,   \lb{2.29}
\end{equation}
and for some $z_0 \in \rho(A)$,
\begin{equation}
\ol{W_2 (A - z_0 I_{\cH})^{-1} W_1^*} \in \cB_p(\cH).    \lb{2.30} 
\end{equation}
In addition, suppose one of the following three conditions holds: 
\begin{align} 
& W_1 (A - {\ol z_0} I_{\cH})^{-1/2} \in \cB_q(\cH), 
\quad W_2 (A - z_0 I_{\cH})^{-1/2} \in \cB_r(\cH),     \lb{2.31} \\ 
& \hspace*{5.25cm}  \f{1}{q} + \f{1}{r} = \f{1}{p}, \; q, r \in [1,\infty),   \no \\     
& W_1 (A - {\ol z_0} I_{\cH})^{-1/2} \in \cB_p(\cH), 
\quad W_2 (A - z_0 I_{\cH})^{-1/2} \in \cB(\cH),    \lb{2.32} \\
& W_1 (A - {\ol z_0} I_{\cH})^{-1/2} \in \cB(\cH), 
\quad W_2 (A - z_0 I_{\cH})^{-1/2} \in \cB_p(\cH).   \lb{2.33}  
\end{align}
Then
\begin{equation}
\lim_{z \downarrow -\infty} \big\|\ol{W_2 (A - z I_{\cH})^{-1} W_1^*}\big\|_{\cB_p(\cH)} 
= \lim_{|\Im(z)|\to\infty} \big\|\ol{W_2 (A - z I_{\cH})^{-1} W_1^*}\big\|_{\cB_p(\cH)} = 0,  
\lb{2.34}
\end{equation}
and hence  
\begin{align}
\begin{split} 
& \lim_{z \downarrow -\infty} {\det}_{p,\cH}\big(I_{\cH} + \ol{W_2 (A - z I_{\cH})^{-1} W_1^*}\big)   \\
& \quad = \lim_{|\Im(z)|\to\infty}  {\det}_{p,\cH}\big(I_{\cH} + 
\ol{W_2 (A - z I_{\cH})^{-1} W_1^*}\big) = 1.     \lb{2.35}
\end{split} 
\end{align}
\end{lemma}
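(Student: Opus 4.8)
The plan is to establish the norm-convergence statement \eqref{2.34} first, since \eqref{2.35} follows from it by continuity of the modified Fredholm determinant on $\cB_p(\cH)$ together with the identity ${\det}_{p,\cH}(I_\cH) = 1$. For \eqref{2.34}, the key idea is the factorization
\[
\ol{W_2 (A - z I_{\cH})^{-1} W_1^*}
= \big[W_2 (A - z I_{\cH})^{-1/2}\big] \big[(A - z I_{\cH})^{-1/2} (A - {\ol z_0} I_{\cH})^{1/2}\big]
\cdot \big[\text{analogous factor on the } W_1 \text{ side}\big]^*,
\]
or, more precisely, writing $R_z = (A - z I_{\cH})^{-1/2}(A - z_0 I_{\cH})^{1/2}$, one has $\ol{W_2(A-zI_\cH)^{-1}W_1^*} = \big[\ol{W_2(A-z_0 I_\cH)^{-1/2}}\,R_z^{\phantom{*}}\big]\big[\ol{W_1(A-\ol{z_0}I_\cH)^{-1/2}}\,R_{\ol z}^{\phantom{*}}\big]^* \cdot(\text{correction})$ — the cleanest bookkeeping is to insert $(A - z_0 I_\cH)^{1/2}(A - z_0 I_\cH)^{-1/2}$ on each side and absorb the $z$-dependence into a bounded, strongly convergent multiplier. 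I would then apply Lemma \ref{l2.4} with the roles of $R_n, T_n$ played by the bounded operators $R_z$ (which converge strongly to $0$ as $z\downarrow-\infty$ or $|\Im(z)|\to\infty$, by the spectral theorem, since $\|(A-z)^{-1/2}(A-z_0)^{1/2}f\|_\cH \to 0$ for each fixed $f$) and with the fixed $\cB_q$, $\cB_r$ (or $\cB_p$, $\cB$) factors playing the role of $S$.

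The three cases \eqref{2.31}--\eqref{2.33} are handled uniformly by this scheme: in case \eqref{2.31} one writes the product as $R_z$ (bounded, $\slim \to 0$) times $[W_2(A-z_0)^{-1/2}]$ in $\cB_r$ times $R_{\ol z}^*$ (bounded, $\slim\to 0$) times $[W_1(A-\ol{z_0})^{-1/2}]^*$ in $\cB_q$ — but Lemma \ref{l2.4} as stated applies to a triple product $R_n S_n T_n^*$ with a single $\cB_p$ factor, so I would first combine the two fixed trace-ideal factors using Hölder's inequality for trace ideals ($\cB_q \cdot \cB_r \subseteq \cB_p$ with $1/q+1/r=1/p$), obtaining a single fixed $S \in \cB_p(\cH)$, and then apply Lemma \ref{l2.4} with $R_n = R_{z_n}$, $T_n = R_{\ol{z_n}}$, noting both converge strongly to $0$ along any sequence $z_n \downarrow -\infty$ (or $|\Im(z_n)| \to \infty$). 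Cases \eqref{2.32} and \eqref{2.33} are the same argument with one of $q,r$ equal to $\infty$, so no Hölder step is needed — one of the outer bounded factors simply absorbs the $\cB(\cH)$ factor.

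The main obstacle — really the only substantive point — is verifying the strong convergence $\slim_{z\downarrow-\infty} R_z = \slim_{|\Im(z)|\to\infty} R_z = 0$ and, more importantly, the uniform boundedness $\sup \|R_z\|_{\cB(\cH)} < \infty$ along the relevant approach regions, since Lemma \ref{l2.4} requires $R_n, T_n \in \cB(\cH)$ with (implicitly) bounded norms for the strong-limit argument to go through. For $z = -t$, $t \to +\infty$, with $A \geq c_0 I_\cH$, the spectral theorem gives $\|R_{-t}\|_{\cB(\cH)} = \sup_{\lambda \geq c_0} \big[(\lambda + |z_0|^{\text{-ish}})/(\lambda+t)\big]^{1/2}$-type expression, which is bounded in $t$ and tends to $0$ pointwise on vectors (dominated convergence against the spectral measure); for $|\Im(z)| \to \infty$ one uses $|(\lambda - z_0)/(\lambda - z)| \leq$ const uniformly. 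Once this is in hand, everything else is a direct appeal to Lemma \ref{l2.4} (applied along an arbitrary sequence, which suffices since the limits in \eqref{2.34} are over a parametrized family and a subsequence argument reduces to sequences), and then \eqref{2.35} follows from $|{\det}_{p,\cH}(I_\cH + K) - 1| \leq \|K\|_{\cB_p(\cH)}\,\exp\big(\Gamma_p(\|K\|_{\cB_p(\cH)} + 1)\big)$-type Lipschitz bounds for the modified determinant (cf.\ \cite[Ch.\ 9]{Si05}), together with \eqref{2.34}.
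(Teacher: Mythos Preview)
Your approach is essentially the paper's, but the bookkeeping is cleaner there and one step in your plan is not quite right.

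The paper avoids square roots entirely by using the first resolvent identity to obtain the single-factor decomposition
\[
\ol{W_2 (A - z I_{\cH})^{-1} W_1^*}
= \big[W_2 (A - z_0 I_{\cH})^{-1/2}\big]\,\big[I_{\cH} + (z-z_0)(A - z I_{\cH})^{-1}\big]\,
\big[W_1 (A - \ol{z_0} I_{\cH})^{-1/2}\big]^*,
\]
so the entire $z$-dependence sits in one bounded middle factor $M(z) := I_{\cH} + (z-z_0)(A - z I_{\cH})^{-1}$, with $\slim M(z) = \slim M(z)^* = 0$ along both limits. This is exactly your $R_z^2$ (up to ordering, since everything commutes with $A$), but it sidesteps any branch issues with $(A-z_0)^{1/2}$ for complex $z_0$ and halves the number of moving parts.

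The actual gap in your plan is the sentence ``first combine the two fixed trace-ideal factors using H\"older's inequality \dots\ obtaining a single fixed $S \in \cB_p(\cH)$.'' You cannot do this: the $\cB_r$ and $\cB_q$ factors are not adjacent---the $z$-dependent factor(s) sit between them---so there is no single fixed $S$ to which Lemma \ref{l2.4} applies in one shot. The correct order of operations (and what the paper does) is: apply Lemma \ref{l2.4} to the pair $M(z)\,[W_1(A-\ol{z_0})^{-1/2}]^*$ to get $\cB_q$-convergence to $0$, and \emph{then} multiply on the left by the fixed $\cB_r$ factor $W_2(A-z_0)^{-1/2}$, using the H\"older inequality $\cB_r \cdot \cB_q \subseteq \cB_p$ for the final $\cB_p$-convergence. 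Your square-root version works the same way but needs one more step (absorb the second $R$-factor via uniform boundedness). Cases \eqref{2.32}--\eqref{2.33} are then indeed handled identically with one ideal exponent set to $\infty$. The determinant conclusion \eqref{2.35} is exactly as you say.
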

\begin{proof} We start with the identity
\begin{align}
& \ol{W_2 (A - z I_{\cH})^{-1} W_1^*} = \ol{W_2 \big[(A - z_0 I_{\cH})^{-1} + (z-z_0) 
(A - z_0 I_{\cH})^{-1} (A - z I_{\cH})^{-1}\big] W_1^*}    \no \\
& \quad = W_2 (A - z_0 I_{\cH})^{-1/2} \big[I_{\cH} + (z-z_0)(A - z I_{\cH})^{-1}\big]
\big[W_1 (A - \ol{z_0} I_{\cH})^{-1/2}\big]^*,     \lb{2.36} \\
& \hspace*{9.9cm} z \in \rho(A).     \no 
\end{align}
Combining \eqref{2.36} with the fact that 
\begin{align}
& \slim_{z\downarrow -\infty} \big[I_{\cH} + (z-z_0) (A - z I_{\cH})^{-1}\big] 
= \slim_{|\Im(z)| \to \infty} \big[I_{\cH} + (z-z_0) (A - z I_{\cH})^{-1}\big] = 0,  \\
& \slim_{z\downarrow -\infty} \big[I_{\cH} + (z-z_0) (A - z I_{\cH})^{-1}\big]^* 
= \slim_{|\Im(z)| \to \infty} \big[I_{\cH} + (z-z_0) (A - z I_{\cH})^{-1}\big]^* = 0,
\end{align}
then permits the application of Lemma \ref{l2.4} to conclude that 
\begin{equation}
\big\|\big[I_{\cH} + (z-z_0)(A - z I_{\cH})^{-1}\big]
\big[W_1 (A - \ol{z_0} I_{\cH})^{-1/2}\big]^*\big\|_{\cB_r(\cH)} \longrightarrow 0
\end{equation}
as $z\downarrow -\infty$ and also as $|\Im(z)|\to\infty$. This implies \eqref{2.32} in 
the case \eqref{2.31} is assumed. The cases where \eqref{2.32} or \eqref{2.33} are assumed are analogous. Continuity of ${\det}_{p,\cH}(I+T)$ as a function of $T$ with respect to the $\|\cdot\|_{\cB_p(\cH)}$-norm, $p \in [1,\infty)$, yields \eqref{2.33}.
\end{proof} 

The argument in the proof of Lemma \ref{l2.5} is analogous to  
the proof of \cite[Lemma\ 8.1.1]{Ya92} where the stronger relative trace class 
assumption $W_1^* W_2 (A - z_0)^{-1} \in \cB_1(\cH)$ is made.

\section{An Abstract Approach to Convergence of \\ Spectral Shift Functions}  \lb{s3}

In this section we prove our principal abstract result, the convergence of a sequence of spectral shift functions $\xi(\cdot;H_j,H_{0,j})$ associated with pairs of self-adjoint operators 
$(H_j, H_{0,j})$ in Hilbert spaces $\cH_j$, $j\in\bbN$, to the limiting 
spectral shift function $\xi(\cdot;H,H_0)$ associated with the pair $(H,H_0)$ in a limiting 
Hilbert space $\cH$ as $j \to\infty$ (mimicking the infinite volume limit in concrete 
situations).

We start with a precise list of our assumptions employed throughout this section:

\begin{hypothesis} \lb{h3.1} Let $\cH$ be a complex, separable Hilbert space. \\
$(i)$ Assume that $\{P_j\}_{j\in\bbN}$ is a sequence of orthogonal projections in $\cH$, strongly converging to the identity in $\cH$,
\begin{equation}
\slim_{j\to \infty} P_j = I_{\cH},     \lb{3.1} 
\end{equation}
and introduce the sequence of closed subspaces $\cH_j = P_j \cH$, $j\in\bbN$, of $\cH$. \\ 
$(ii)$ Let $H_0$ be a self-adjoint operator in $\cH$, and for each $j \in \bbN$, 
let $H_{0,j}$ be self-adjoint operators in $\cH_j$. In addition, suppose that 
$H_0$ is bounded from below in $\cH$, and for each $j\in\bbN$, $H_{0,j}$ are bounded 
from below in $\cH_j$. \\
$(iii)$ Suppose that $V_1$, and $V_2$ are closed operators in $\cH$, and for each 
$j \in\bbN$, assume that $V_{1,j}$, and $V_{2,j}$ are closed operators in $\cH_j$ such that 
\begin{align}
& \dom(V_1) \cap \dom(V_2) \supseteq \dom(H_0),    \lb{3.2} \\
& \dom(V_{1,j}) \cap \dom(V_{2,j}) \supseteq \dom(H_{0,j}), \quad j\in\bbN,   \lb{3.3}
\end{align} 
and  
\begin{align}
& \ol{V_2(H_0 - z I_{\cH})^{-1}V_1^*}, \, \ol{V_{2,j}(H_{0,j} - z I_{\cH_j})^{-1}V_{1,j}^*} 
\oplus 0 \in \cB_1(\cH), \quad j \in \bbN,    \lb{3.4} \\
& V_2(H_0 - z I_{\cH})^{-1}, \, V_{2,j}(H_{0,j} -z I_{\cH_j})^{-1}\oplus 0 \in \cB_{2}(\cH), 
\quad j\in\bbN,    \lb {3.5} \\
& \ol{(H_0 - z I_{\cH})^{-1}V_1^*}, \, \ol{(H_{0,j} - z I_{\cH_j})^{-1}V_{1,j}^*}\oplus 0 
\in \cB_{2}(\cH), \quad j\in\bbN,    \lb{3.6}
\end{align}
for some $($and hence for all\,$)$ $z\in \C\backslash \R$. In addition, assume that 
\begin{align}
\begin{split} 
& \lim_{z \downarrow - \infty} \big\|\big[\ol{V_2 (H_0 - z I_{\cH})^{-1}V_1^*}\big\|_{\cB_1(\cH)} =0, 
\lb{3.6a} \\ 
& \lim_{z \downarrow - \infty} \big\|\big[\ol{V_{2,j}(H_{0,j} - z I_{\cH_j})^{-1}V_{1,j}^*} 
\oplus 0\big]\big\|_{\cB_1(\cH)} =0, \quad j \in\bbN.  
\end{split}
\end{align}
Here we used the orthogonal decomposition of $\cH$ into
\begin{equation}
\cH=\cH_j\oplus \cH_j^{\perp}, \quad j\in\bbN.   \lb{3.7}
\end{equation}
$(iv)$ Assume that for some $($and hence for all\,$)$ $z\in \C\backslash \R$, 
\begin{equation}
\slim_{j \to \infty}\bigg[(H_{0,j} - z I_{\cH_j})^{-1}\oplus \frac{-1}{z} I_{\cH_j^{\perp}}\bigg] 
= (H_0 - z I_{\cH})^{-1}.    \lb{3.8}
\end{equation}
$(v)$ Suppose that for some $($and hence for all\,$)$ $z\in \C\backslash \R$, 
\begin{align}
& \lim_{j \to \infty}\big\|\big[\ol{V_{2,j}(H_{0,j} - z I_{\cH_j})^{-1}V_{1,j}^*}\oplus 0\big] 
- \ol{V_2 (H_0 - z I_{\cH})^{-1}V_1^*}\big\|_{\cB_1(\cH)} =0,    \lb{3.9} \\ 
& \lim_{j \to \infty} \big\|\big[V_{2,j}(H_{0,j} - z I_{\cH_j})^{-1}\oplus 0\big] - 
V_2 (H_{0} - z I_{\cH})^{-1}\big\|_{\cB_{2}(\cH)} =0,     \lb{3.10} \\ 
& \lim_{j \to \infty} \big\|\big[\ol{(H_{0,j} - z I_{\cH_j})^{-1}V_{1,j}^*}\oplus 0\big]
- \ol{(H_{0}-z)^{-1}V_1^*}\big\|_{\cB_{2}(\cH)} =0.    \lb{3.11}
\end{align}
$(vi)$ suppose that 
\begin{align}
\begin{split} 
& (V_2f,V_1g)_{\cH}=(V_1f,V_2g)_{\cH}, \quad f,g\in\dom(V_1)\cap\dom(V_2),   \lb{3.12} \\
& (V_{2,j}f,V_{1,j}g)_{\cH}=(V_{1,j}f,V_{2,j}g)_{\cH}, \quad f,g\in\dom(V_{1,j})
\cap\dom(V_{2,j}), \quad j \in\bbN. 
\end{split}
\end{align}
\end{hypothesis}

Following Kato \cite{Ka66}, Hypothesis \ref{h3.1}, permits one to define the 
self-adjoint operator $H$ in $\cH$, and for each $j\in\bbN$, the self-adjoint operators 
$H_j$ in $\cH_j$ via their resolvents (for $z\in\bbC\backslash\bbR$) by 
\begin{align}
& (H - z I_{\cH})^{-1} = (H_0 - z I_{\cH})^{-1}    \no \\ 
& \quad - \ol{(H_0 - z I_{\cH})^{-1}V_1^*} 
\big[I_{\cH} + \ol{V_2 (H_0 - z I_{\cH})^{-1} V_1^*}\big]^{-1} 
V_2(H_0 - z I_{\cH})^{-1},  \lb{3.13} \\
& (H_j - z I_{\cH_j})^{-1} = (H_{0,j} - z I_{\cH_j})^{-1}    \no \\ 
& \quad - \ol{(H_{0,j} - z I_{\cH_j})^{-1}V_{1,j}^*} 
\big[I_{\cH_j} + \ol{V_{2,j} (H_{0,j} - z I_{\cH_j})^{-1} V_{1,j}^*}\big]^{-1} 
V_{2,j}(H_0 - z I_{\cH_j})^{-1},   \no \\
& \hspace*{10cm}  j\in\bbN.   \lb{3.14}
\end{align}
Of course, both resolvent equations \eqref{3.13} and \eqref{3.14} extend by continuity 
to $\rho(H)\cap\rho(H_0)$ and $\rho(H_j)\cap\rho(H_{0,j})$, $j\in\bbN$, respectively. 

\begin{lemma} \lb{l3.2}
Assume Hypothesis \ref{h3.1}. Then
\begin{equation} 
\slim_{j \to \infty}\bigg[(H_j - z I_{\cH_j})^{-1}\oplus \frac{-1}{z} I_{\cH_j^{\perp}}\bigg] 
= (H - z I_{\cH})^{-1},  \quad z \in \bbC\backslash\bbR,  \lb{3.15} \\
\end{equation}
and
\begin{align}
& \big[(H - z I_{\cH})^{-1} - (H_0 - z I_{\cH})^{-1}\big] \in \cB_1(\cH),  \quad 
z \in \rho(H)\cap\rho(H_0),   \lb{3.16} \\
& \big[(H_j - z I_{\cH_j})^{-1} - (H_{0,j} - z I_{\cH_j})^{-1}\big] \in \cB_1(\cH_j), 
\quad z \in \rho(H_j)\cap\rho(H_{0,j}), \; j\in\bbN.  \lb{3.17} 
\end{align}
\end{lemma}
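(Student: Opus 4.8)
The plan is to prove \eqref{3.15} first and then deduce the trace-class statements \eqref{3.16}, \eqref{3.17} from the resolvent formulas \eqref{3.13}, \eqref{3.14} together with the trace-ideal hypotheses in Hypothesis \ref{h3.1}. For \eqref{3.15}, I would substitute the second resolvent formula \eqref{3.14} and compare term by term with \eqref{3.13}. The leading terms match in the strong limit by assumption \eqref{3.8}. For the correction term, the key algebraic observation is that
\begin{equation*}
\ol{(H_{0,j} - z I_{\cH_j})^{-1}V_{1,j}^*}\big[I_{\cH_j} + \ol{V_{2,j}(H_{0,j} - z I_{\cH_j})^{-1}V_{1,j}^*}\big]^{-1} V_{2,j}(H_{0,j} - z I_{\cH_j})^{-1},
\end{equation*}
when padded by $0$ on $\cH_j^\perp$, factors as the product of three pieces: $\ol{(H_{0,j}-zI_{\cH_j})^{-1}V_{1,j}^*}\oplus 0$, the bounded middle inverse (extended by $I$ on $\cH_j^\perp$, say), and $V_{2,j}(H_{0,j}-zI_{\cH_j})^{-1}\oplus 0$. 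The outer factors converge in $\cB_2(\cH)$-norm to $\ol{(H_0-zI_{\cH})^{-1}V_1^*}$ and $V_2(H_0-zI_{\cH})^{-1}$ respectively, by \eqref{3.10}, \eqref{3.11}; in particular they converge strongly, and their adjoints converge strongly as well (norm convergence passes to adjoints).

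For the middle factor, I would use \eqref{3.9}: since $\ol{V_{2,j}(H_{0,j}-zI_{\cH_j})^{-1}V_{1,j}^*}\oplus 0 \to \ol{V_2(H_0-zI_{\cH})^{-1}V_1^*}$ in $\cB_1(\cH)$, hence in operator norm, and since $\big[I_{\cH}+\ol{V_2(H_0-zI_{\cH})^{-1}V_1^*}\big]^{-1}$ exists for $z\in\bbC\setminus\bbR$ (the operator $H$ being well-defined), a standard Neumann-series/perturbation argument shows $\big[I_{\cH_j}+\ol{V_{2,j}(H_{0,j}-zI_{\cH_j})^{-1}V_{1,j}^*}\big]^{-1}\oplus I_{\cH_j^\perp}$ converges in operator norm, a fortiori strongly, to $\big[I_{\cH}+\ol{V_2(H_0-zI_{\cH})^{-1}V_1^*}\big]^{-1}$. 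Then Lemma \ref{l2.4} (with $p$ irrelevant here — really just the product of strongly convergent bounded operators, one could invoke that a product of strongly convergent nets of uniformly bounded operators converges strongly, or simply apply Lemma \ref{l2.4} in the form where the middle $\cB_p$-factor is replaced by a norm-convergent one) gives strong convergence of the whole correction term. Combining, \eqref{3.15} follows. It is worth noting one only needs the conclusion for a single $z_0\in\bbC\setminus\bbR$ and then the resolvent identity propagates it to all $z\in\bbC\setminus\bbR$, but since \eqref{3.8}--\eqref{3.11} are assumed for all such $z$, this is automatic.

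The trace-class inclusions \eqref{3.16}, \eqref{3.17} are more direct: by \eqref{3.13}, the difference $(H-zI_{\cH})^{-1}-(H_0-zI_{\cH})^{-1}$ equals the correction term, which is a product of $\ol{(H_0-zI_{\cH})^{-1}V_1^*}\in\cB_2(\cH)$ (by \eqref{3.6}), a bounded operator, and $V_2(H_0-zI_{\cH})^{-1}\in\cB_2(\cH)$ (by \eqref{3.5}); the product of two Hilbert--Schmidt operators lies in $\cB_1(\cH)$, so the difference is trace class for $z\in\bbC\setminus\bbR$, and it extends to $z\in\rho(H)\cap\rho(H_0)$ by the resolvent identity as remarked after \eqref{3.14}. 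The same computation applied to \eqref{3.14}, using \eqref{3.5}, \eqref{3.6} for the $j$-indexed operators, yields \eqref{3.17}. I expect the only genuine subtlety to be the bookkeeping around the orthogonal padding $\oplus 0$ and $\oplus I_{\cH_j^\perp}$ — making sure the factorization of the correction term respects the decomposition \eqref{3.7} so that Lemma \ref{l2.4} applies verbatim — but this is routine once one writes $(H_j-zI_{\cH_j})^{-1}\oplus(-1/z)I_{\cH_j^\perp}$ and tracks which summand each factor acts on.
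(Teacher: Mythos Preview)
Your proposal is correct and follows essentially the same approach as the paper: the paper's proof is a two-sentence sketch (``combine \eqref{3.8}--\eqref{3.11} with \eqref{3.13}, \eqref{3.14}'' for \eqref{3.15}, and ``combine \eqref{3.4}--\eqref{3.6} with \eqref{3.13}, \eqref{3.14}'' for \eqref{3.16}, \eqref{3.17}), and you have spelled out precisely the details the authors omit. Your treatment of the middle inverse via norm convergence from \eqref{3.9}, and of the outer Hilbert--Schmidt factors via \eqref{3.10}, \eqref{3.11}, matches how the paper itself fills in the analogous argument later in the proof of Lemma \ref{l3.3} (cf.\ \eqref{3.39}--\eqref{3.42}).
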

\begin{proof} 
To prove \eqref{3.15} it suffices to combine \eqref{3.8}--\eqref{3.11}, and \eqref{3.13} and \eqref{3.14}. Similarly, \eqref{3.16} and \eqref{3.17} follow upon combining 
\eqref{3.4}--\eqref{3.6} and \eqref{3.13} and \eqref{3.14}. 
 \end{proof}

One notes that the trace class and Hilbert-Schmidt assumptions in \eqref{3.4}--\eqref{3.6} and \eqref{3.9}--\eqref{3.11} are 
by no means necessary for the proof of \eqref{3.15}. In particular, $\cB_1(\cH)$ or $\cB_2(\cH)$ 
could be replaced by $\cB(\cH)$ in all these places (and we will use a Hilbert--Schmidt assumption later in the context of Theorem \ref{t3.17}).

\begin{remark} \lb{r3.1a}
We also note that if $H_{0,j} \geq c_0 I_{\cH_j}$ for some $c_0 \in\bbR$ independent 
of $j\in\bbN$, and if 
\begin{equation}
\lim_{z \downarrow - \infty} \big\|\big[\ol{V_{2,j}(H_{0,j} - z I_{\cH_j})^{-1}V_{1,j}^*} 
\oplus 0\big]\big\|_{\cB(\cH)} =0,  
\end{equation}
uniformly with respect to $j \in\bbN$, then \eqref{3.14} shows that also $H_{j}$ are bounded from below, uniformly with respect to $j\in\bbN$, that is, $H_{j} \geq c I_{\cH_j}$ for some $c \in\bbR$ independent of $j\in\bbN$. 
\end{remark}

Assuming Hypothesis \ref{h3.1}, we now abbreviate by 
\begin{equation}
\xi(\cdot) = \xi(\cdot;H,H_0), \quad \xi_j(\cdot) = \xi(\cdot;H_j,H_{0,j}), \quad j\in\bbN, 
\lb{3.18}
\end{equation}
the Krein spectral shift functions corresponding to the pairs $(H,H_0)$ in $\cH$ and 
$(H_j,H_{0,j})$ in $\cH_j$,  respectively. Thus, for $z\in \C\backslash \R$,
\begin{align}
\begin{split} 
{\tr}_{\cH} \big((H - z I_{\cH})^{-1} - (H_{0} - z I_{\cH})^{-1}\big) 
& = -\int_{\R}\frac{\xi(\lambda) d\lambda}{(\lambda-z)^2},    \\
{\tr}_{\cH_j} \big((H_j - z I_{\cH_j})^{-1} - (H_{0,j} - z I_{\cH_j})^{-1}\big) 
& = - \int_{\R}\frac{\xi_j (\lambda) d\lambda}{(\lambda-z)^2},    \lb{3.20} 
\end{split} 
\end{align}
with, 
\begin{equation}\lb{3.21}
\xi , \, \xi_j\in L^1\big(\R; (1+\lambda^2)^{-1}d\lambda\big), \quad j\in\bbN. 
\end{equation}
In addition, we introduce the perturbation determinants
\begin{align}
D(z)& = {\det}_{\cH}\big(I_{\cH}+\ol{V_2 (H_{0} - z I_{\cH})^{-1}V_1^*}\big), 
\quad z\in \rho(H) \cap \rho(H_0),   \no \\
D_j(z)& = {\det}_{\cH_j}\big(I_{\cH_j} + \ol{V_{2,j}(H_{0,j} - z I_{\cH_j})^{-1}V_{1,j}^*}\big), 
\quad z\in \rho(H_j) \cap \rho(H_{0,j}), \; j\in\bbN.   \lb{3.23}  
\end{align}

We start with the following preliminary results.

\begin{lemma} \lb{l3.3}
Assume Hypothesis \ref{h3.1} and let $a, z \in \bbC\backslash \bbR$. Then
\begin{align}
& \lim_{j \to \infty}\ln \big(D_j(z)/D_j(a)\big)=\ln \big(D(z)/D(a)\big),  
\lb{3.24} \\
& \lim_{j \to \infty}\int_{\R}\frac{\xi_j(\lambda) d\lambda}{(\lambda -a )(\lambda-z)^n}
=\int_{\R}\frac{\xi(\lambda) d\lambda}{(\lambda-a)(\lambda-z)^n}, \quad n\in \N.  
\lb{3.25}
\end{align}
\end{lemma}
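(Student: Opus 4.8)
The plan is to establish the two limits in Lemma~\ref{l3.3} by passing them through the Fredholm determinant machinery assembled in Section~\ref{s2}. First I would prove \eqref{3.24}. The key identity is that for $z\in\bbC\backslash\bbR$ one has the resolvent-type expansion
\begin{align}
& \ol{V_{2,j}(H_{0,j} - z I_{\cH_j})^{-1}V_{1,j}^*} \oplus 0    \no \\
& \quad = \big[V_{2,j}(H_{0,j} - a I_{\cH_j})^{-1/2} \oplus 0\big]
\big[I_{\cH} + (z-a) R_j(z)\big]
\big[\ol{(H_{0,j} - \ol a I_{\cH_j})^{-1/2}V_{1,j}^*}\oplus 0\big]^*,   \no
\end{align}
where $R_j(z) = (H_{0,j}-zI_{\cH_j})^{-1}\oplus(-1/z)I_{\cH_j^\perp}$; combining \eqref{3.8} and \eqref{3.10}, \eqref{3.11} with Lemma~\ref{l2.4} then gives convergence in $\cB_1(\cH)$ of the finite-rank-perturbed operators, so that $D_j(z)\to D(z)$ and $D_j(a)\to D(a)$ by continuity of the (unmodified) Fredholm determinant with respect to $\|\cdot\|_{\cB_1(\cH)}$. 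Since $D(a), D(z)\ne 0$ (both $a,z$ lie in the resolvent set of $H$ and $H_0$), the quotients and their logarithms converge, giving \eqref{3.24}. A subtlety worth spelling out is that one must pass from the trace ideal convergence \eqref{3.9} (which is about the operators directly) to convergence of the determinants $D_j(z)$ at a \emph{general} $z$; but \eqref{3.9} is assumed for all $z\in\bbC\backslash\bbR$ by the "for all" clause in Hypothesis~\ref{h3.1}$(v)$, so no interpolation in $z$ is actually needed and the determinants converge pointwise in $z$.

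Next I would prove \eqref{3.25}. By \eqref{2.11A}/\eqref{2.7} applied to each pair,
\begin{equation}
\f{d}{dz}\ln\big(D_j(z)/D_j(a)\big)
= \f{d}{dz}\ln D_j(z)
= - {\tr}_{\cH_j}\big((H_j - z I_{\cH_j})^{-1} - (H_{0,j} - z I_{\cH_j})^{-1}\big)
= \int_{\R}\f{\xi_j(\lambda)\,d\lambda}{(\lambda-z)^2},   \no
\end{equation}
and similarly without the subscript $j$. Integrating \eqref{3.24} in $z$ (and differentiating back where needed) thus already yields \eqref{3.25} for $n=2$ up to a check that the convergence $\ln(D_j(z)/D_j(a))\to\ln(D(z)/D(a))$ can be differentiated in $z$; this follows from the Vitali/Montel-type theorem for holomorphic functions on $\bbC\backslash\bbR$ once one has a local uniform bound, which in turn comes from the uniform (in $j$) high-energy decay \eqref{3.6a} together with \eqref{3.8}. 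For general $n\in\bbN$, I would differentiate $n-2$ further times in $z$ (the functions $z\mapsto\int_\R \xi_j(\lambda)(\lambda-z)^{-2}\,d\lambda$ are holomorphic off $\bbR$, and $\tfrac{d^{\,m}}{dz^m}(\lambda-z)^{-2} = (m+1)!\,(\lambda-z)^{-(m+2)}$), and then account for the extra factor $(\lambda-a)^{-1}$ by writing
$\frac{1}{(\lambda-a)(\lambda-z)^n} = \frac{1}{z-a}\big[\frac{1}{(\lambda-a)(\lambda-z)^{n-1}} - \frac{1}{(\lambda-z)^n}\big]$ and inducting on $n$, reducing everything to the already-established case of pure powers $(\lambda-z)^{-n}$. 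The integrability \eqref{3.21} guarantees all these integrals converge absolutely and the manipulations are legitimate.

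The main obstacle I anticipate is the justification that convergence of the holomorphic functions $z\mapsto\ln D_j(z)$ (equivalently of $\int\xi_j(\lambda)(\lambda-z)^{-2}d\lambda$) is strong enough to be differentiated in $z$ and to transfer to the weighted integrals in \eqref{3.25}: pointwise convergence of holomorphic functions does not automatically give convergence of derivatives, so one needs a locally uniform bound on $\{\ln D_j(\cdot)\}_{j}$ on compact subsets of $\bbC\backslash\bbR$. Such a bound should come from estimating $\|\ol{V_{2,j}(H_{0,j}-zI_{\cH_j})^{-1}V_{1,j}^*}\oplus 0\|_{\cB_1(\cH)}$ uniformly in $j$ on compacta — using the identity \eqref{2.36}-type factorization, the uniform high-energy decay in \eqref{3.6a}, and the uniform strong-resolvent convergence \eqref{3.8} (which yields uniform boundedness of the resolvents by the uniform boundedness principle) — and then invoking $|\ln{\det}_\cH(I+T)| \le \|T\|_{\cB_1(\cH)}$. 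With that bound in hand, Vitali's theorem upgrades pointwise to locally uniform convergence, hence convergence of all $z$-derivatives, and the remaining passages to \eqref{3.25} are the routine algebraic reductions sketched above.
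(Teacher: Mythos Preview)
Your treatment of \eqref{3.24} matches the paper's: both simply invoke \eqref{3.9} for each fixed $z$, pass to determinants via continuity of ${\det}_{\cH}(I+\cdot)$ in $\|\cdot\|_{\cB_1(\cH)}$, and take quotients and logarithms. (As you correctly observe, the resolvent factorization you wrote is unnecessary since \eqref{3.9} already holds for all $z\in\bbC\backslash\bbR$.)

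For \eqref{3.25} your route is genuinely different from the paper's. The paper avoids complex analysis entirely: it proves \eqref{3.43}, that is, $\int\xi_j(\lambda)(\lambda-z)^{-n-1}\,d\lambda\to\int\xi(\lambda)(\lambda-z)^{-n-1}\,d\lambda$, by a direct $\cB_1(\cH)$-convergence argument for the operators ${(H_{0,j}-zI_{\cH_j})^{-n}-(H_j-zI_{\cH_j})^{-n}}$, expanding $A^n-B^n=\sum A^{n-k}(A-B)B^{k-1}$ and applying Gr\"umm's theorem (Lemma~\ref{l2.4}) termwise using \eqref{3.8}, \eqref{3.10}, \eqref{3.11}, and the resolvent identities \eqref{3.39}, \eqref{3.40}. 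Then it combines \eqref{3.43} with the same partial-fraction recursion you wrote (their \eqref{3.44}) and the base case $n=1$ coming from \eqref{3.24} via \eqref{3.28}, \eqref{3.29}. Your approach instead obtains \eqref{3.43} by differentiating the pointwise limit \eqref{3.24} after upgrading it via Vitali. Both arguments are valid; the paper's is more self-contained within the operator-theoretic framework and reuses the same Gr\"umm-type step that appears later, while yours is shorter once the normal-families machinery is granted and makes the role of analyticity in $z$ more transparent.

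One correctable slip: the inequality $|\ln{\det}_{\cH}(I+T)|\le\|T\|_{\cB_1(\cH)}$ is false in general (consider $T$ with an eigenvalue near $-1$), so you cannot bound $\ln D_j$ directly. Instead, bound $|D_j(z)|\le e^{\|T_j(z)\|_{\cB_1(\cH)}}$ uniformly on compacta (your uniform $\cB_1$-bound via \eqref{3.9}--\eqref{3.11} and the resolvent identity does give this), apply Vitali to the $D_j$ themselves to get locally uniform convergence $D_j\to D$, and then, since $D(z)\ne 0$ on $\bbC\backslash\bbR$, pass to logarithms and their derivatives.
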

\begin{proof}
The identities
\begin{align}
& {\det}_{\cH_j}\big(I_{\cH_j}+\ol{V_{2,j}(H_{0,j} - z I_{\cH_j})^{-1}V_{1,j}^*} \big)   \no \\
& \quad = {\det}_{\cH}\big(\big(I_{\cH_j}+\ol{V_{2,j}(H_{0,j} - z I_{\cH_j})^{-1}V_{1,j}^*}
\big)\oplus 
I_{\cH_j^{\perp}} \big)\no\\
& \quad = {\det}_{\cH}\big(I_\cH + \big(\ol{V_{2,j}(H_{0,j} - z I_{\cH_j})^{-1}V_{1,j}^*}
\oplus 0\big)\big),  \quad j\in\bbN, \lb{3.26}
\end{align}
together with \eqref{3.9} and continuity of ${\det}_{\cH}(I+A)$ as a function of $A$ with respect to the trace norm $\|\cdot\|_{\cB_1(\cH)}$, immediately yield
\begin{equation}
\lim_{j \to \infty}D_j(z)=D(z), \quad z\in \C\backslash \R,     \lb{3.27}
\end{equation}
and hence the convergence in \eqref{3.24}.

Applying \eqref{2.7}, one verifies that for any $a, z \in \bbC\backslash \bbR$, 
\begin{align}
\ln\big(D(z)/D(a)\big)&=(z-a)\int_{\R}\frac{\xi(\lambda) d\lambda}
{(\lambda-a)(\lambda-z)},     \lb{3.28} \\
\ln\big(D_j(z)/D_j(a)\big)&=(z-a)\int_{\R}\frac{\xi_j(\lambda) d\lambda}
{(\lambda-a)(\lambda-z)}, \quad j \in \bbN.     \lb{3.29}
\end{align}

To verify \eqref{3.25}, we start with the basic identities (see, e.g., \cite[Ch.\ 8]{Ya92}) 
\begin{align}
\tr_{\cH}\big((H_{0}-zI_{\cH})^{-n}-(H-zI_{\cH})^{-n}\big) 
&=n\int_{\R}\frac{\xi(\lambda) d\lambda}{(\lambda-z)^{n+1}},    \lb{3.30} \\
\tr_{\cH_j}\big ((H_{0,j}-zI_{\cH_j})^{-n}-(H_j-zI_{\cH_j})^{-n}\big) 
&=n\int_{\R}\frac{\xi_j(\lambda) d\lambda}{(\lambda-z)^{n+1}}, \quad  
j \in \bbN,   \lb{3.31} \\
& \hspace*{1.6cm} n\in \N, \quad z\in \C\backslash \R.\no
\end{align}
Next we claim that 
\begin{align}
\begin{split} 
& \lim_{j \to \infty} \tr_{\cH_j}\big((H_{0,j}-zI_{\cH_j})^{-n}-(H_j-zI_{\cH_j})^{-n} \big)   \\
& \quad =\tr_{\cH}\big((H_{0}-zI_{\cH})^{-n}-(H-zI_{\cH})^{-n}\big),   \quad 
n\in \N, \quad z\in \C\backslash \R.    \lb{3.32}
\end{split} 
\end{align}
To see this, one notes that
\begin{align}
& \tr_{\cH_j}\big((H_{0,j}-zI_{\cH_j})^{-n}-(H_j-zI_{\cH_j})^{-n} \big)    \no \\
& \quad = \tr_{\cH}\big(((H_{0,j}-zI_{\cH_j})^{-n}-(H_j-zI_{\cH_j})^{-n})\oplus 0 \big)\no\\
 &\quad = \tr_{\cH}\bigg[\bigg((H_{0,j}-zI_{\cH_j})^{-1}
 \oplus \frac{-1}{z} I_{\cH_j^\perp}\bigg)^n 
 - \bigg((H_j-zI_{\cH_j})^{-1}\oplus \frac{-1}{z} I_{\cH_j^\perp}\bigg)^n  \bigg],   \no \\
& \hspace*{8.3cm} n\in \N, \; z\in \C\backslash \R.    \lb{3.33}
\end{align}
Since the trace functional is continuous with respect to the 
$\cB_1(\cH)$-norm, to verify \eqref{3.32}, it suffices to prove that 
\begin{align}   \lb{3.34}
& \sum_{k=1}^n \bigg((H_{0,j}-zI_{\cH_j})^{-1}\oplus\frac{-1}{z} I_{\cH_j^\perp}\bigg)^{n-k}
\bigg[(H_{0,j}-zI_{\cH_j})^{-1}-(H_j-zI_{\cH_j})^{-1}\oplus 0 \bigg]    \no \\
& \quad \times \bigg((H_j-zI_{\cH_j})^{-1} 
\oplus \frac{-1}{z} I_{\cH_j^\perp}\bigg)^{k-1}
\end{align}
converges to 
\begin{equation}\lb{3.35}
\sum_{k=1}^n (H_0-zI_{\cH})^{k-n}\big[(H_0-zI_{\cH})^{-1}
-(H-zI_{\cH})^{-1} \big](H-zI_{\cH})^{1-k}
\end{equation}
in $\cB_1(\cH)$ as $j\to \infty$, since \eqref{3.34} is the operator under the trace on 
the r.h.s. of \eqref{3.33} and \eqref{3.35} is the operator under the trace on the 
r.h.s. of \eqref{3.32}.\footnote{Here we have made use of the identity 
$A^n-B^n=\sum_{k=1}^nA^{n-k}(A-B)B^{k-1}$.}

By \eqref{3.8}, one concludes that 
\begin{align}
\slim_{j \to \infty} \bigg((H_{0,j}-zI_{\cH_j})^{-1}\oplus\frac{-1}{z} I_{\cH_j^\perp}\bigg)^{n-k}
&=(H_{0}-zI_{\cH})^{k-n},  
\lb{3.36} \\
\slim_{j \to \infty} \bigg((H_j-zI_{\cH_j})^{-1}\oplus\frac{-1}{z} I_{\cH_j^\perp}\bigg)^{k-1}
&=(H-zI_{\cH})^{1-k}.  \lb{3.37}
\end{align}
Thus, convergence of \eqref{3.34} to \eqref{3.35}, will follow from Gr\"{u}mm's 
Theorem \cite{Gr73} (see also Lemma \ref{l2.4} and the discussion in \cite[Ch.\ 2]{Si05}) 
if we can show that \begin{align} \lb{3.38}
\begin{split} 
& \lim_{j \to \infty} \big\|\big[\big((H_{0,j}-zI_{\cH_j})^{-1}-(H_j-zI_{\cH_j})^{-1}\big)
\oplus 0\big]     \\
& \hspace*{1cm} - (H_0-zI_{\cH})^{-1}-(H-zI_{\cH})^{-1}\big\|_{\cB_1(\cH)} = 0. 
\end{split} 
\end{align}
 The convergence in \eqref{3.38} follows readily from the identities 
\begin{align}
&\big((H_j-zI_{\cH_j})^{-1}-(H_{0,j}-zI_{\cH_j})^{-1}\big)\oplus 0
=\big(\ol{(H_{0,j}-zI_{\cH_j})^{-1}V_{1,j}^*}\oplus0\big)  \lb{3.39} \\
&\quad \times 
\Big(\big[I_{\cH}+\big(\ol{V_{2,j}(H_{0,j}-zI_{\cH_j})^{-1}V_{1,j}^*}\oplus 0\big) \big]^{-1}
-\big(0\oplus I_{\cH_j^{\perp}} \big)\Big)    \no \\
& \quad \times \big(V_{2,j}(H_{0,j}-zI_{\cH_j})^{-1}
\oplus 0\big),     \no \\
&(H_{0}-zI_{\cH})^{-1}-(H-zI_{\cH})^{-1}    \no\\
&\quad = \ol{(H_{0}-zI_{\cH})^{-1}V_1^*}\big[I_{\cH}
+\ol{V_2(H_{0}-zI_{\cH})^{-1}V_1^*} \big]^{-1}
V_2(H_{0}-zI_{\cH})^{-1}.\lb{3.40}
\end{align}
Hypothesis \eqref{3.1} and relation \eqref{3.9} yield the strong convergence
\begin{align}\lb{3.41}
\begin{split} 
& \slim_{j\to \infty}\big(\big[I_{\cH}+\big(\ol{V_{2,j}(H_{0,j}-zI_{\cH_j})^{-1}V_{1,j}^*}
\oplus 0\big) \big]^{-1} - \big(0 \oplus I_{\cH_j^{\perp}} \big)\big)    \\
& \quad = \big[I+\ol{V_2(H_{0}-zI_{\cH})^{-1}V_1^*}\big]^{-1}.
\end{split} 
\end{align}
Therefore, \eqref{3.41} and \eqref{3.10} together with Gr\"{u}mm's Theorem \cite{Gr73} 
yield 
\begin{align}
&\lim_{j\to \infty}\Big(\big[I_{\cH}+\big(\ol{V_{2,j}(H_{0,j}-zI_{\cH_j})^{-1}V_{1,j}^*}
\oplus 0\big) \big]^{-1}   \no \\ 
& \qquad -\big(0 \oplus I_{\cH_j^{\perp}} \big)\Big)\big(V_{2,j}(H_{0,j}-zI_{\cH_j})^{-1}
\oplus 0\big)     \no \\
&\quad =\Big[I+\ol{V_2(H_{0}-zI_{\cH})^{-1}V_1^*}\Big]^{-1} 
V_2 (H_0-z I_{\cH})^{-1} \, \text{ in $\cB_2(\cH)$.}   \lb{3.42}
\end{align}
The convergence in \eqref{3.42} and \eqref{3.11}  
yields convergence of the r.h.s. of \eqref{3.39} to \eqref{3.40} in $\cB_1(\cH)$, implying \eqref{3.38}.

Employing \eqref{3.31}, \eqref{3.30}, and \eqref{3.38}, we have shown that
\begin{equation}\lb{3.43}
\lim_{j\to \infty}\int_{\R}\frac{\xi_j(\lambda) d\lambda}{(\lambda-z)^{n+1}} 
=\int_{\R}\frac{\xi(\lambda) d\lambda}{(\lambda-z)^{n+1}}, \quad n\in \N
\end{equation}
so that \eqref{3.25} holds in the special case $a=z$. Thus, it remains to settle 
the case $z\neq a$.

For $z\neq a$ one notes that 
\begin{align}
\int_{\R}\frac{\xi_j(\lambda) d\lambda}{(\lambda-a)(\lambda-z)^{n+1}}
&=\frac{1}{z-a}\bigg[\int_{\R}\frac{\xi_j(\lambda) d\lambda}{(\lambda - z)^{n+1}} 
- \int_{\R}\frac{\xi_j(\lambda) d\lambda}{(\lambda-a)(\lambda-z)^n}\bigg],   \no \\
\int_{\R}\frac{\xi(\lambda) d\lambda}{(\lambda-a)(\lambda-z)^{n+1}} 
&=\frac{1}{z-a}\bigg[\int_{\R}\frac{\xi(\lambda) d\lambda}{(\lambda - z)^{n+1}} 
-\int_{\R}\frac{\xi(\lambda) d\lambda}{(\lambda-a)(\lambda-z)^n}\bigg],    \lb{3.44} \\
& \hspace*{6.4cm}   n \in \bbN.  \no 
\end{align}
Convergence in \eqref{3.25} now follows from \eqref{3.43} and the two identities 
in \eqref{3.44} via a simple induction on $n$.  We emphasize that \eqref{3.24} 
yields the crucial first induction step, $n=1$, since \eqref{3.24} implies, 
via \eqref{3.29} and \eqref{3.28}, that
\begin{equation}
\lim_{j\to \infty}\int_{\R}\frac{\xi_j(\lambda) d\lambda}{(\lambda-a)(\lambda-z)} 
= \int_{\R}\frac{\xi(\lambda) d\lambda}{(\lambda-a)(\lambda-z)}, 
\quad z\neq a.  \lb{3.45}
\end{equation}
\end{proof}

In the following we denote by $C_{\infty}(\R)$ the space of continuous functions on $\bbR$ vanishing at infinity.

\begin{lemma} \lb{l3.4}
Let $f, f_j\in L^1(\bbR; d\lambda)$ and suppose that for some fixed $M>0$, 
$\|f_j\|_{L^1(\bbR;d\lambda)}\leq M$, $j\in\bbN$.  If
\begin{equation} \lb{3.46}
\lim_{j\rightarrow \infty} \int_{\bbR} f_j(\lambda) d\lambda \, 
P((\lambda+i)^{-1},(\lambda-i)^{-1}) = \int_{\bbR}  
f(\lambda) d\lambda \, P((\lambda+i)^{-1},(\lambda-i)^{-1})
\end{equation}
for all polynomials $P(\cdot,\cdot)$ in two variables, then
\begin{equation}
\lim_{j\rightarrow \infty}\int_{\bbR} 
f_j(\lambda) d\lambda \, g(\lambda)=\int_{\bbR}  
f(\lambda) d\lambda \,g(\lambda), \quad g\in C_{\infty}(\bbR). 
\end{equation}
\end{lemma}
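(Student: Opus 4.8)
The plan is to exploit the Stone--Weierstrass theorem applied to the one-point compactification $\R\cup\{\infty\}$ of the real line. First I would observe that the two functions $\lambda\mapsto(\lambda+i)^{-1}$ and $\lambda\mapsto(\lambda-i)^{-1}$ are continuous on $\R$, vanish at infinity, and are complex conjugates of one another. Hence the set of all polynomial expressions $P((\lambda+i)^{-1},(\lambda-i)^{-1})$ with no constant term forms a subalgebra $\mathcal{A}_0$ of $C_\infty(\R)$ that is closed under complex conjugation; adjoining constants produces a subalgebra $\mathcal{A}$ of $C(\R\cup\{\infty\})$ which separates points of $\R\cup\{\infty\}$ (since $\lambda\mapsto(\lambda+i)^{-1}$ is injective on $\R$ and tends to $0$ at $\infty$) and contains the constants. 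By the complex Stone--Weierstrass theorem, $\mathcal{A}$ is dense in $C(\R\cup\{\infty\})$ in the sup norm, and consequently $\mathcal{A}_0$ is dense in $C_\infty(\R)$ in the sup norm.

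Next I would turn the hypothesis \eqref{3.46}, which is stated only for individual monomials/polynomials, into a statement for every $h\in C_\infty(\R)$ by an $\varepsilon/3$ argument using the uniform $L^1$-bound $\|f_j\|_{L^1(\R;d\lambda)}\leq M$. Given $g\in C_\infty(\R)$ and $\varepsilon>0$, choose by the density just established a function $h=P((\cdot+i)^{-1},(\cdot-i)^{-1})$ with $\|g-h\|_{L^\infty(\R)}<\varepsilon$. Then
\begin{align}
\begin{split}
& \left|\int_\R f_j(\lambda)\,g(\lambda)\,d\lambda - \int_\R f(\lambda)\,g(\lambda)\,d\lambda\right|
\leq \left|\int_\R f_j(\lambda)\,[g(\lambda)-h(\lambda)]\,d\lambda\right| \\
& \quad + \left|\int_\R f_j(\lambda)\,h(\lambda)\,d\lambda - \int_\R f(\lambda)\,h(\lambda)\,d\lambda\right|
+ \left|\int_\R f(\lambda)\,[h(\lambda)-g(\lambda)]\,d\lambda\right|,
\end{split}
\end{align}
and the first and third terms are bounded by $M\varepsilon$ and $\|f\|_{L^1(\R;d\lambda)}\varepsilon$ respectively (the latter finite since $f\in L^1(\R;d\lambda)$), while the middle term tends to $0$ as $j\to\infty$ by \eqref{3.46}. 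Letting first $j\to\infty$ and then $\varepsilon\downarrow 0$ finishes the argument.

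The one genuinely delicate point is making sure the algebra $\mathcal{A}_0$ of polynomials in $(\lambda\pm i)^{-1}$ \emph{without constant term} is still dense in $C_\infty(\R)$: Stone--Weierstrass on the compact space $\R\cup\{\infty\}$ produces density of $\mathcal{A}$ (with constants) in $C(\R\cup\{\infty\})$, and one must pass back to functions vanishing at $\infty$. This is handled by the standard observation that $C_\infty(\R)$ is precisely the maximal ideal of $C(\R\cup\{\infty\})$ consisting of functions vanishing at the point $\infty$, and that $\mathcal{A}_0=\mathcal{A}\cap C_\infty(\R)$ is dense in that ideal because $\mathcal{A}$ is dense in $C(\R\cup\{\infty\})$ and separates $\infty$ from the rest (concretely, given $g\in C_\infty(\R)$ and an approximant $p\in\mathcal{A}$ with $\|g-p\|_\infty<\varepsilon$, the constant term of $p$ is $p(\infty)$, which satisfies $|p(\infty)|=|p(\infty)-g(\infty)|<\varepsilon$, so $p-p(\infty)\in\mathcal{A}_0$ approximates $g$ within $2\varepsilon$). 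Everything else is the routine three-term splitting above; no further input beyond the stated uniform $L^1$ bound and the integrability $f\in L^1(\R;d\lambda)$ is needed.
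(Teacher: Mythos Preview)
Your proof is correct and follows essentially the same approach as the paper's own proof: both invoke the density of polynomials in $(\lambda\pm i)^{-1}$ in $C_\infty(\bbR)$ via Stone--Weierstrass and then use the uniform $L^1$ bound in an $\varepsilon/3$-type estimate. Your treatment is in fact more careful than the paper's, which simply asserts the density ``by a Stone--Weierstrass argument'' without spelling out the passage through the one-point compactification or the handling of constant terms.
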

\begin{proof}
Let $\varepsilon>0$ and $g\in C_{\infty}(\bbR)$.  Since by a 
Stone--Weierstrass argument, polynomials in 
$(\lambda \pm i)^{-1}$ are dense in $C_{\infty}(\bbR)$, there is a polynomial $P(\cdot,\cdot)$ in two variables such that writing 
\begin{equation}
\cP(\lambda)=P((\lambda+i)^{-1},(\lambda-i)^{-1}), 
\quad \lambda \in\bbR,
\end{equation}
one concludes that  
\begin{equation}
\|g-\cP\|_{L^\infty(\bbR; d\lambda)} \leq 
\frac{\varepsilon}{2[M+\|f\|_{L^1(\bbR;d\lambda)}]}. 
\end{equation}
By \eqref{3.46}, there exists an $N(\varepsilon) \in \bbN$ such that
\begin{equation}
\bigg|\int_{\bbR} f_j(\lambda) d\lambda \, \cP(\lambda) -\int_{\bbR} 
f(\lambda) d\lambda \, \cP(\lambda) \bigg| \leq \frac{\varepsilon}{2}
\, \text{ for all $j \geq N(\varepsilon)$.} 
\end{equation}
Therefore, if $j \geq N(\varepsilon)$, 
\begin{align}
& \bigg| \int_{\bbR} f_j(\lambda)d\lambda \, g(\lambda)  
- \int_{\bbR} f(\lambda) d\lambda \, g(\lambda) \bigg| 
\leq \big[\|f_j\|_{L^1(\bbR;d\lambda)}+\|f\|_{L^1(\bbR;d\lambda)}\big] 
\|g-\cP\|_{L^\infty(\bbR; d\lambda)}   \no \\
& \quad + \bigg|\int_{\bbR} f_j(\lambda) d\lambda \, \cP(\lambda) 
-\int_{\bbR} f(\lambda) d\lambda \, \cP(\lambda)\bigg| 
\leq \varepsilon.  
\end{align}
\end{proof}

Next, we continue with some preparations needed to prove the principal results of this section. We start by recalling some basic notions regarding the convergence of positive measures (essentially following Bauer 
\cite[\&\ 30]{Ba01}). Denoting by $\mathscr{M}_+(E)$ the set of all {\it positive Radon measures} on a locally compact space $E$, and by 
\begin{equation}
\mathscr{M}_+^b(E)=\{\mu \in \mathscr{M}_+(E)\,|\, \mu(E)<+\infty\}, 
\end{equation}
the set of all \textit{finite} positive Radon measures on $E$, we note that in the special case $E=\R^n$, $n\in\bbN$, $\mathscr{M}_+^b(\R^n)$ represents the set of all finite positive Borel measures on $\R^n$.  

If $\mu$ is a Radon measure, a point $x\in E$ is called an \textit{atom} of $\mu$ if $\mu(\{x\})>0$.  

In the following, $C_0(E)$ denotes the continuous functions on $E$ with compact support, and $C_b(E)$ represents the bounded continuous functions on $E$.

\begin{definition} \lb{d3.5}
Let $E$ be a locally compact space. \\
$(i)$ A sequence $\{\mu_j\}_{j\in\bbN} \subset \mathscr{M}_+(E)$ is said to be \textit{vaguely convergent} to a Radon measure $\mu \in \mathscr{M}_+(E)$ if
\begin{equation}
\lim_{j\to \infty}\int_E d\mu_j \, g = \int_E d\mu \, g, \quad g \in C_0(E). 
\end{equation}
$(ii)$ A sequence $\{\mu_j\}_{j\in\bbN} \subset \mathscr{M}_+^b(E)$ is said to be \textit{weakly convergent} to $\mu \in \mathscr{M}_+^b(E)$ if 
\begin{equation}
\lim_{j \to \infty}\int_E d\mu_j \, f=\int_Ed\mu \, f, \quad f\in C_b(E). 
\end{equation}
$(iii)$ A Borel set $B \subset E$ is called \textit{boundaryless} with respect to the measure $\mu \in \mathscr{M}_+^b(E)$  
$($in short, $\mu$-\textit{boundaryless}$)$, if the boundary $\partial B$ of $B$ has $\mu$-measure equal to zero, $\mu(\partial B) = 0$.
\end{definition} 

\begin{theorem}[\cite{Ba01}, Theorem 30.8] \lb{t3.7}
Suppose that the sequence $\{\mu_j\}_{j\in\bbN} \subset \mathscr{M}_+^b(E)$ converges vaguely to the measure $\mu \in \mathscr{M}_+^b(E)$.  Then the following statements are equivalent: \\
$(i)$ The sequence $\mu_j$ converges weakly to $\mu$ as $j\to\infty$.  \\
$(ii)$ $\lim_{j \rightarrow \infty} \mu_j (E) = \mu (E)$.   \\
$(iii)$ For every $\varepsilon>0$ there exists a compact set $K_{\varepsilon}$  of $E$ such that 
\begin{equation}
\mu_j (E \backslash K_{\varepsilon}) \leq \varepsilon, \quad j \in \N.
\end{equation}
\end{theorem}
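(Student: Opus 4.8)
The plan is to establish the cyclic chain of implications $(i)\Rightarrow(ii)\Rightarrow(iii)\Rightarrow(i)$, using throughout only the standard regularity machinery available on a locally compact space (inner regularity of finite positive Radon measures, and Urysohn's lemma producing cutoff functions in $C_0(E)$). The first implication is immediate: the constant function $\mathbf{1}$ lies in $C_b(E)$, so testing weak convergence against it gives $\mu_j(E) = \int_E d\mu_j\,\mathbf{1} \to \int_E d\mu\,\mathbf{1} = \mu(E)$.

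For $(ii)\Rightarrow(iii)$, fix $\varepsilon>0$. By inner regularity of the finite Radon measure $\mu$, pick a compact $K_0\subset E$ with $\mu(E\backslash K_0)<\varepsilon/3$, and then $g\in C_0(E)$ with $0\le g\le 1$ and $g\equiv 1$ on $K_0$; put $L=\supp(g)$. Since $\mathbf{1}_{E\backslash L}\le 1-g$ and $1-g$ vanishes on $L$, we have $\mu_j(E\backslash L)\le \int_E(1-g)\,d\mu_j = \mu_j(E) - \int_E g\,d\mu_j$. Vague convergence yields $\int_E g\,d\mu_j\to\int_E g\,d\mu\ge\mu(K_0)$, and $(ii)$ yields $\mu_j(E)\to\mu(E)$, so $\limsup_{j\to\infty}\mu_j(E\backslash L)\le \mu(E)-\mu(K_0)<\varepsilon/3$. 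Hence $\mu_j(E\backslash L)<\varepsilon$ for all $j$ beyond some $N$; for the finitely many indices $j\le N$, inner regularity of each $\mu_j$ supplies a compact $L_j$ with $\mu_j(E\backslash L_j)<\varepsilon$, and $K_\varepsilon:=L\cup\bigcup_{j\le N}L_j$ is the required compact set.

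For $(iii)\Rightarrow(i)$, fix $f\in C_b(E)$, set $M=\sup_E|f|$, and fix $\varepsilon>0$. Choose by $(iii)$ a compact $K$ with $\sup_{j}\mu_j(E\backslash K)\le\varepsilon$; testing against functions in $C_0(E)$ supported in the open set $E\backslash K$ and then invoking inner regularity of $\mu$ on that set shows also $\mu(E\backslash K)\le\varepsilon$. Pick $h\in C_0(E)$ with $0\le h\le 1$ and $h\equiv 1$ on $K$. Then $fh\in C_0(E)$, so vague convergence gives $\int_E fh\,d\mu_j\to\int_E fh\,d\mu$, while for $\nu\in\{\mu_j,\mu\}$ one has $\big|\int_E f(1-h)\,d\nu\big|\le M\,\nu(E\backslash K)\le M\varepsilon$ because $1-h$ vanishes on $K$. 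Combining these three bounds gives $\limsup_{j\to\infty}\big|\int_E f\,d\mu_j-\int_E f\,d\mu\big|\le 2M\varepsilon$, and letting $\varepsilon\downarrow 0$ completes the argument.

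I expect the main obstacle to be the bookkeeping in the step $(ii)\Rightarrow(iii)$: vague convergence controls the tails of the $\mu_j$ only asymptotically, so one must separately absorb the finitely many initial measures through their individual inner regularity, and one must be careful that all the cutoff functions produced by Urysohn's lemma genuinely lie in $C_0(E)$ — not merely in $C_b(E)$ — so that the hypothesis of vague convergence is actually applicable to them. The other two implications are then routine.
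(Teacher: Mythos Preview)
Your proof is correct and follows the standard cyclic argument for this classical measure-theoretic fact. Note, however, that the paper does not actually supply a proof of this theorem: it is stated there purely as a quotation from Bauer's textbook \cite{Ba01} (Theorem~30.8), with no argument given. So there is no ``paper's own proof'' to compare against; you have simply filled in the details of a cited external result, and your treatment matches the approach one finds in standard references such as Bauer.
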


\begin{theorem}[\cite{Ba01}, Theorem 30.12]\lb{t3.8}
Suppose that the sequence $\{\mu_j\}_{j\in\bbN} \subset \mathscr{M}_+^b(E)$ converges weakly to $\mu \in \mathscr{M}_+^b(E)$. Then
\begin{equation}
\lim_{j \rightarrow \infty}\int_E d\mu_j \, f = \int_E d\mu \, f
\end{equation}
holds for every bounded Borel measurable function $f$ that is $\mu$-almost everywhere continuous on $E$.  In particular, 
\begin{equation}
\lim_{j \rightarrow \infty}\mu_j (B)=\mu(B)
\end{equation}
holds for every $\mu$-boundaryless Borel set $B$.
\end{theorem}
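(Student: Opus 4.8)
The plan is to establish the displayed convergence for a general bounded, $\mu$-a.e.\ continuous Borel function $f$ by a lower/upper semicontinuous envelope (``portmanteau''-type) argument, and then to recover the statement about $\mu$-boundaryless sets as the special case $f=\chi_B$. For that reduction, one observes that for a Borel set $B\subseteq E$ the indicator $\chi_B$ is bounded and Borel measurable, and the set of points at which $\chi_B$ fails to be continuous is exactly the topological boundary $\partial B$; hence $B$ being $\mu$-boundaryless means precisely that $\chi_B$ is continuous $\mu$-a.e., and then $\mu_j(B)=\int_E d\mu_j\,\chi_B\to\int_E d\mu\,\chi_B=\mu(B)$ as soon as the general assertion is proved.

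So let $f$ be bounded, Borel measurable, and continuous at $\mu$-a.e.\ point of $E$, and let $D$ be its discontinuity set, which is $F_\sigma$ (hence Borel), being the set on which the oscillation of $f$ is positive, with $\mu(D)=0$ by hypothesis. First I would normalize: applying weak convergence to the constant function $1\in C_b(E)$ gives $\mu_j(E)\to\mu(E)$ (cf.\ also Theorem \ref{t3.7}), so after an affine substitution $f\mapsto (f+c)/c'$ with suitable constants $c,c'>0$ the claim for $f$ is equivalent to the claim for a function satisfying $0\leq f\leq 1$; assume this. Next introduce the envelopes $f_*(x)=\liminf_{y\to x}f(y)$ and $f^*(x)=\limsup_{y\to x}f(y)$, which are bounded, respectively lower and upper semicontinuous, satisfy $f_*\leq f\leq f^*$, and coincide with $f$ at every continuity point, hence off $D$; since $\mu(D)=0$ this yields $\int_E f_*\,d\mu=\int_E f\,d\mu=\int_E f^*\,d\mu$.

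The core step is to sandwich $f$ between continuous functions. Since $E$ --- being a locally compact space with a countable base throughout this development, in particular $E=\bbR$ in the applications --- is metrizable, a bounded lower semicontinuous function is the pointwise increasing limit of bounded continuous functions; choose $g_n\in C_b(E)$ with $0\leq g_n\uparrow f_*$ and, applying the same fact to $1-f^*$, choose $h_n\in C_b(E)$ with $1\geq h_n\downarrow f^*$. From $g_n\leq f\leq h_n$ and weak convergence applied to $g_n$ and to $h_n$ one obtains, for each $n$,
\begin{align*}
\int_E g_n\,d\mu=\lim_{j\to\infty}\int_E g_n\,d\mu_j&\leq\liminf_{j\to\infty}\int_E f\,d\mu_j \\
&\leq\limsup_{j\to\infty}\int_E f\,d\mu_j\leq\lim_{j\to\infty}\int_E h_n\,d\mu_j=\int_E h_n\,d\mu,
\end{align*}
and letting $n\to\infty$ (using $\mu(E)<\infty$ together with monotone/dominated convergence) the two outer integrals tend to $\int_E f_*\,d\mu$ and $\int_E f^*\,d\mu$, both equal to $\int_E f\,d\mu$. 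Hence $\lim_{j\to\infty}\int_E f\,d\mu_j=\int_E f\,d\mu$, which completes the argument.

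The only genuinely nontrivial point --- and thus the step I would watch most carefully --- is the approximation of bounded semicontinuous functions by bounded continuous ones, which on a general locally compact space requires the countable-base assumption carried throughout the present development and is automatic for $E=\bbR$. If one preferred to circumvent it, one could note that for all but countably many $t\in(0,1)$ the level set $\{f>t\}$ is $\mu$-boundaryless, because $\partial\{f>t\}\subseteq D\cup\{f=t\}$ and the sets $\{f=t\}$ are pairwise disjoint with finite total $\mu$-mass, and then combine the layer-cake identity $\int_E f\,d\nu=\int_0^1\nu(\{f>t\})\,dt$ with dominated convergence in $t$; this route is essentially equivalent and still uses Urysohn functions to handle open and closed level sets. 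The remaining items --- the affine normalization, the envelope identities, and the interchange of limits --- are routine.
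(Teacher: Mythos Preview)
The paper does not prove this statement at all; it is quoted verbatim as Theorem~30.12 from Bauer's textbook \cite{Ba01} and used as a black box in the proof of Theorem~\ref{t3.13}. So there is no ``paper's own proof'' to compare against, and your task reduces to whether your argument is an acceptable self-contained substitute for the citation.

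Your portmanteau-style argument is essentially correct and is indeed one of the standard proofs of this result. Two small points are worth tightening. First, be careful with the definition of the envelopes: if you take $f_*(x)=\liminf_{y\to x}f(y)$ over \emph{deleted} neighborhoods, then $f_*\le f$ can fail at discontinuity points, which would break the inequality $g_n\le f$ needed pointwise (not just $\mu$-a.e.) for the $\mu_j$-integrals. The fix is to use the standard lower/upper envelopes $f_*(x)=\sup_{U\ni x}\inf_{y\in U}f(y)$ and $f^*(x)=\inf_{U\ni x}\sup_{y\in U}f(y)$, which always satisfy $f_*\le f\le f^*$ and are lower/upper semicontinuous. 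Second, the approximation of a bounded lower semicontinuous function by an increasing \emph{sequence} of continuous functions does require a countability hypothesis (metrizability or second countability), which the paper does not impose on $E$. On a general locally compact Hausdorff space one instead uses that for a nonnegative lower semicontinuous $g$ one has $\int g\,d\mu=\sup\{\int h\,d\mu:h\in C_c(E),\,0\le h\le g\}$ by inner regularity of the Radon measure $\mu$; the sandwich then runs over this directed family rather than a sequence, and no countability is needed. Your layer-cake alternative also works and sidesteps this issue. With either of these adjustments your proof goes through and is a perfectly good replacement for the bare citation.
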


As usual, finite signed Radon measures are viewed as differences of finite positive Radon measures in the following. 

Next, we slightly strengthen our assumptions a bit. 

\begin{hypothesis} \lb{h3.9}
In addition to Hypothesis \ref{h3.1} we now assume the following conditions: \\
$(vii)$ Suppose that $V_1$, and $V_2$ are closed operators in $\cH$, and for each 
$j \in\bbN$, assume that $V_{1,j}$, and $V_{2,j}$ are closed operators in $\cH_j$ such that 
\begin{align}
& \dom(V_1) \cap \dom(V_2) \supseteq \dom\big(|H_0|^{1/2}\big),    \lb{3.60} \\
& \dom(V_{1,j}) \cap \dom(V_{2,j}) \supseteq \dom\big(|H_{0,j}|^{1/2}\big), \quad j\in\bbN,   
\lb{3.61}
\end{align} 
and  
\begin{equation}
V = V_1^* V_2 \, \text{ is a self-adjoint operator in $\cH$},    \lb{3.62}
\end{equation}
and for each $j\in\bbN$,
\begin{equation}
V_j = V_{1,j}^* V_{2,j} \, \text{ is a self-adjoint operator in $\cH_j$}.    \lb{3.63}
\end{equation}
$(viii)$ Decomposing $V, V_j$, $j\in\bbN$, into their positive and negative parts, 
\begin{equation}
V_{\pm} = (1/2) [|V| \pm V], \quad V_{j,\pm} = (1/2) [|V_j| \pm V_j], \; j\in\bbN,  
\lb{3.64}
\end{equation}
$V_{\pm}$ are assumed to be infinitesimally form bounded with respect to $H_0$, 
and for each $j\in\bbN$, $V_{j,\pm}$ are assumed to be infinitesimally form bounded 
with respect to $H_{0,j}$.
\end{hypothesis}

Hypothesis \ref{h3.9} permits us to identify $H$ and $H_j$ with the form sums,
\begin{equation}
H = H_0 +_q V, \quad H_j = H_{0,j} +_q V_j, \; j\in\bbN.    \lb{3.65}
\end{equation}
It also permits one to introduce the positive and negative parts of $V$ and $V_j$ step by step, and in either order, that is,
\begin{align}
H &= (H_0 +_q V_+) +_q (-V_-) = H_0 +_q V_+ -_q V_-,   \lb{3.66} \\ 
H_j &= (H_{0,j} +_q V_{j,+}) +_q (-V_{j,-}) 
= H_{0,j} +_q V_{j,+} -_q V_{j,-}, \quad j\in\bbN,   \lb{3.67}
\end{align}
with resolvent equations of the type \eqref{3.13} and \eqref{3.14} valid in each case 
(replacing $H_0$, $H_{0,j}$ by $H_0 +_q V_+$, $H_{0,j} +_q V_{j,+}$, etc.). 

In this context we now decompose
\begin{align}
\xi(\cdot) &= \xi(\cdot;H,H_0) = \xi(\cdot;H_0 +_q V_+ -_q V_- ,H_0)   \no \\
& = \xi(\cdot;H_0 +_q V_+ -_q V_- ,H_0 +_q V_+) 
+ \xi(\cdot;H_0 +_q V_+,H_0),    \no \\
& = \xi_+(\cdot) - \xi_-(\cdot),    \lb{3.68} \\
\xi_j(\cdot) &= \xi(\cdot;H_j,H_{0,j}) 
= \xi(\cdot;H_{0,j} +_q V_{j,+} -_q V_{j,-} ,H_{0,j})   \no \\
& = \xi(\cdot;H_{0,j} +_q V_{j,+} -_q V_{j,-} ,H_{0,j} +_q V_{j,+}) 
+ \xi(\cdot;H_{0,j} +_q V_{+,j},H_{0,j})     \no \\
& = \xi_{j,+}(\cdot) - \xi_{j,-}(\cdot),    \quad j \in \bbN,    \lb{3.69}
\end{align}
where
\begin{align}
& \xi_+(\cdot) = \xi(\cdot;H_0 +_q V_+,H_0) \geq 0,  \lb{3.70} \\
& \xi_-(\cdot) = - \xi(\cdot;H_0 +_q V_+ -_q V_- ,H_0 +_q V_+) \geq 0, 
\lb{3.71} \\
& \xi_{j,+}(\cdot) = \xi(\cdot;H_{0,j} +_q V_{+,j},H_{0,j}) \geq 0, \quad j\in\bbN, 
\lb{3.72} \\
& \xi_{j,-}(\cdot) 
= - \xi(\cdot;H_{0,j} +_q V_{j,+} -_q V_{j,-} ,H_{0,j} +_q V_{j,+}) \geq 0, 
\quad j \in \bbN.   \lb{3.73}
\end{align}

\begin{theorem}\lb{t3.10}
Assume Hypothesis \ref{h3.9} and $g \in C_\infty(\bbR)$. Then 
\begin{equation}\lb{3.74}
\lim_{j\to \infty} \int_{\R} \frac{\xi_j(\lambda) d\lambda}{\lambda^2 + 1} \, g(\lambda) 
= \int_{\R} \frac{\xi(\lambda) d\lambda}{\lambda^2 + 1} \, g(\lambda). 
\end{equation} 
\end{theorem}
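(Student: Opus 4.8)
The plan is to deduce \eqref{3.74} from Lemma \ref{l3.4} applied to the functions $f_j(\lambda) = \xi_j(\lambda)(\lambda^2+1)^{-1}$ and $f(\lambda) = \xi(\lambda)(\lambda^2+1)^{-1}$, which lie in $L^1(\bbR;d\lambda)$ by \eqref{3.21}. Two hypotheses of Lemma \ref{l3.4} then remain to be verified: the polynomial convergence condition \eqref{3.46}, and the uniform bound $\sup_{j\in\bbN}\|f_j\|_{L^1(\bbR;d\lambda)}<\infty$. Granting both, Lemma \ref{l3.4} yields $\lim_{j\to\infty}\int_{\bbR}f_j(\lambda)\,d\lambda\,g(\lambda) = \int_{\bbR}f(\lambda)\,d\lambda\,g(\lambda)$ for all $g\in C_\infty(\bbR)$, which is precisely \eqref{3.74}.

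To verify \eqref{3.46} I would first write $(\lambda^2+1)^{-1} = (\lambda+i)^{-1}(\lambda-i)^{-1}$, so that for any polynomial $P$ in two variables the integrand $f_j(\lambda)P((\lambda+i)^{-1},(\lambda-i)^{-1})$ is a finite linear combination of terms $\xi_j(\lambda)(\lambda+i)^{-m}(\lambda-i)^{-n}$ with $m,n\ge 1$ (the extra factor $(\lambda^2+1)^{-1}$ guaranteeing at least one power of each). Since $|(\lambda+i)^{-m}(\lambda-i)^{-n}| = (\lambda^2+1)^{-(m+n)/2}\le(\lambda^2+1)^{-1}$, each such integral converges absolutely by \eqref{3.21}, and I would show
\[
\lim_{j\to\infty}\int_{\bbR}\frac{\xi_j(\lambda)\,d\lambda}{(\lambda+i)^m(\lambda-i)^n} = \int_{\bbR}\frac{\xi(\lambda)\,d\lambda}{(\lambda+i)^m(\lambda-i)^n}, \quad m,n\in\bbN,
\]
directly from \eqref{3.25}: when $m=1$ or $n=1$ this is a special case of \eqref{3.25} (with $a=-i$, $z=i$, resp.\ $a=i$, $z=-i$), and the general case follows by induction on $m+n$ via the elementary identity $\big[(\lambda+i)^m(\lambda-i)^n\big]^{-1} = \tfrac{i}{2}\big[\big((\lambda+i)^m(\lambda-i)^{n-1}\big)^{-1} - \big((\lambda+i)^{m-1}(\lambda-i)^n\big)^{-1}\big]$. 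Summing the finitely many resulting terms then gives \eqref{3.46}.

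For the uniform bound I would pass to the decomposition \eqref{3.68}--\eqref{3.73}, $\xi_j = \xi_{j,+} - \xi_{j,-}$ with $\xi_{j,\pm}\ge 0$, so that $\|f_j\|_{L^1(\bbR;d\lambda)}\le\int_{\bbR}\xi_{j,+}(\lambda)(\lambda^2+1)^{-1}d\lambda + \int_{\bbR}\xi_{j,-}(\lambda)(\lambda^2+1)^{-1}d\lambda$. Applying the analogs of \eqref{3.24} and \eqref{3.28}--\eqref{3.29} with $a=-i$, $z=i$ to the pairs $(H_{0,j}+_q V_{j,+},H_{0,j})$ and $(H_j,H_{0,j}+_q V_{j,+})$ (whose perturbation determinants I denote $\cD_{j,\pm}(\cdot)$) expresses each of $\int_{\bbR}\xi_{j,\pm}(\lambda)(\lambda^2+1)^{-1}d\lambda$ as $\pm\tfrac{1}{2i}\ln\!\big(\cD_{j,\pm}(i)/\cD_{j,\pm}(-i)\big)$, the branch being fixed by the integral representation. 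Running the argument of Lemma \ref{l3.3} for these two pairs (in particular \eqref{3.24}/\eqref{3.27}) shows that $\ln\!\big(\cD_{j,\pm}(i)/\cD_{j,\pm}(-i)\big)$ converge as $j\to\infty$, hence $\int_{\bbR}\xi_{j,\pm}(\lambda)(\lambda^2+1)^{-1}d\lambda$ converge, and are in particular bounded uniformly in $j$; this gives the required $M$.

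The main obstacle is precisely this last step: to invoke Lemma \ref{l3.3} for the pairs $(H_{0,j}+_q V_{j,+},H_{0,j})$ and $(H_j,H_{0,j}+_q V_{j,+})$ one must first supply factorizations $V_{j,\pm} = V_{1,j,\pm}^* V_{2,j,\pm}$ (and likewise $V_\pm = V_{1,\pm}^* V_{2,\pm}$) obeying the trace class and Hilbert--Schmidt conditions \eqref{3.4}--\eqref{3.6} and \eqref{3.9}--\eqref{3.11}. Under Hypothesis \ref{h3.9} these can be manufactured, e.g., by taking $V_{1,j,\pm} = V_{2,j,\pm}$ equal to $|V_j|^{1/2}$ composed with the spectral projection of $V_j$ onto $(0,\infty)$, resp.\ $(-\infty,0)$, and then dominating the resulting sandwiched resolvents by those associated with the given factorization $V_j = V_{1,j}^* V_{2,j}$. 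Carrying out this verification — together with the fact that the second pair involves the $V_{j,+}$-perturbed operator, so its resolvent must itself be controlled via the resolvent identity \eqref{3.14} — is the technical heart of the argument.
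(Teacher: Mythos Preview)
Your proposal is correct and close in spirit to the paper's proof, but organized a bit differently. You apply Lemma~\ref{l3.4} directly to $f_j=\xi_j(\lambda^2+1)^{-1}$, so that the polynomial convergence \eqref{3.46} follows at once from \eqref{3.25} for the original pair $(H_j,H_{0,j})$ (already proved in Lemma~\ref{l3.3}), and you bring in the decomposition $\xi_j=\xi_{j,+}-\xi_{j,-}$ only to secure the uniform $L^1$ bound. The paper instead applies Lemma~\ref{l3.4} to $\xi_{j,\pm}(\lambda^2+1)^{-1}$ separately from the outset: it reduces \eqref{3.77} to \eqref{3.78} via a full partial-fraction expansion into pure poles at $\pm i$, reads off \eqref{3.78} from \eqref{3.25} applied to the $\pm$ pairs, and then gets the uniform bound simply by taking $n=0$ in \eqref{3.78}. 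This buys the paper the stronger intermediate statement \eqref{3.76} (convergence for each sign individually), which is not needed for Theorem~\ref{t3.10} itself but is exactly what drives the vague-to-weak upgrade in Theorem~\ref{t3.13}. Both routes ultimately hinge on Lemma~\ref{l3.3} (or at least the determinant convergence \eqref{3.27}) for the $\pm$ pairs; you correctly flag this as the main obstacle and sketch the verification, whereas the paper treats it as understood. One minor slip: your recursive identity has a sign error (the right-hand side as written equals $-[(\lambda+i)^m(\lambda-i)^n]^{-1}$), which is of course harmless for the induction.
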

\begin{proof}
The basic idea of proof  consists of verifying that
\begin{equation}\lb{3.75}
\lim_{j\to \infty}\int_{\R}\frac{\xi_{j,\pm}(\lambda) d\lambda}{\lambda^2 + 1} \, 
P((\lambda+i)^{-1},(\lambda-i)^{-1})
=\int_{\R}\frac{\xi_{\pm}(\lambda) d\lambda}{\lambda^2 + 1} \, 
P((\lambda+i)^{-1},(\lambda-i)^{-1} )
\end{equation}
for all polynomials $P(\cdot,\cdot)$ in two variables, and then rely on the 
Stone--Weierstrass approximation in Lemma \ref{l3.4} to get  
\begin{equation} 
 \lim_{j\rightarrow \infty}\int_{\R}\frac{\xi_{j,\pm}(\lambda) d\lambda}{\lambda^2 + 1} \, g(\lambda)   
= \int_{\R}\frac{\xi_{\pm}(\lambda) d\lambda}{\lambda^2 + 1} \, g(\lambda), 
\quad g\in C_{\infty}(\R),    \lb{3.76}
\end{equation}
and hence \eqref{3.74}. To prove \eqref{3.75}, it suffices to verify
\begin{align}\lb{3.77}
\begin{split} 
\lim_{j\to \infty}\int_{\R}\frac{\xi_{j,\pm}(\lambda) d\lambda}{\lambda^2 + 1} 
\frac{1}{(\lambda+i)^m(\lambda-i)^n}
= \int_{\R}\frac{\xi_{\pm}(\lambda) d\lambda}{\lambda^2 + 1}
\frac{1}{(\lambda+i)^m(\lambda-i)^n},& \\   
m,n\in \N\cup \{0\},& 
\end{split} 
\end{align}
which, in turn, follows once one proves 
\begin{equation}\lb{3.78}
\lim_{j\to \infty}\int_{\R}\frac{\xi_{j,\pm}(\lambda) d\lambda}{\lambda^2 + 1} 
\frac{1}{(\lambda\pm i)^n}
= \int_{\R}\frac{\xi_{\pm}(\lambda) d\lambda}{\lambda^2 + 1} 
\frac{1}{(\lambda\pm i)^n}, \quad  n\in \N\cup \{0\}, 
\end{equation}
since
\begin{equation}
\frac{1}{(\lambda+i)^m(\lambda-i)^n}
=\sum_{j=1}^m\frac{c_j}{(\lambda+i)^j}+\sum_{j=1}^n\frac{\widehat{c}_j}{(\lambda-i)^j}
\end{equation}
for appropriate sets of constants $c_j$ and $\widehat{c}_j$.
Choosing $z=\pm i$ and $a=\mp i$ in \eqref{3.25} yields \eqref{3.78}, and therefore 
\eqref{3.75} for all polynomials $P$.  At this point, \eqref{3.74} follows from 
Lemma \ref{l3.4} once one shows the existence of an $M>0$ for which 
\begin{equation}
\int_{\R} \f{\xi_{j,\pm}(\lambda) d\lambda)}{\lambda^2 +1} \leq M 
\end{equation} 
for $j$ sufficiently large. Taking \eqref{3.78} with $n=0$, yields the convergence  
\begin{equation} 
\lim_{j\to\infty} \int_{\R} \f{\xi_{j,\pm}(\lambda) d\lambda}{\lambda^2 +1} 
= \int_{\R} \f{\xi_{\pm}(\lambda) d\lambda}{\lambda^2 +1}.  
\end{equation} 
As a result, $\int_{\R} \xi_{j,\pm}(\lambda) d\lambda \, (1+\lambda^2)^{-1}$ is uniformly bounded with respect to $j \in \bbN$.
\end{proof}

An immediate consequence of Theorem \ref{t3.10} is the following vague convergence result:

\begin{corollary}\lb{c3.11}
Assume Hypothesis \ref{h3.9} and let $g \in C_0(\bbR)$. Then 
\begin{equation}\lb{3.82}
\lim_{j\to \infty} \int_{\R}\xi_j(\lambda)d\lambda \, g(\lambda) 
= \int_{\R}\xi(\lambda) d\lambda \, g(\lambda). 
\end{equation} 
\end{corollary}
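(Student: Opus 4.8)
The plan is to deduce the claim directly from Theorem~\ref{t3.10} by absorbing the Poisson-type weight $(\lambda^2+1)^{-1}$ into the test function. Given $g \in C_0(\bbR)$, introduce $\widetilde g(\lambda) = (\lambda^2+1)\, g(\lambda)$, $\lambda \in \bbR$. Since $g$ is continuous with compact support and $\lambda \mapsto \lambda^2+1$ is a polynomial, $\widetilde g$ is again continuous with the same compact support; in particular $\widetilde g \in C_0(\bbR) \subseteq C_\infty(\bbR)$.

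I would then apply Theorem~\ref{t3.10} with the test function $\widetilde g$ in place of $g$, which gives
\[
\lim_{j\to\infty} \int_{\bbR} \frac{\xi_j(\lambda)\, d\lambda}{\lambda^2+1}\, \widetilde g(\lambda)
= \int_{\bbR} \frac{\xi(\lambda)\, d\lambda}{\lambda^2+1}\, \widetilde g(\lambda).
\]
Substituting $\widetilde g(\lambda) = (\lambda^2+1) g(\lambda)$ on both sides cancels the weight $(\lambda^2+1)^{-1}$ against the factor $(\lambda^2+1)$, and one is left with
\[
\lim_{j\to\infty} \int_{\bbR} \xi_j(\lambda)\, d\lambda\, g(\lambda)
= \int_{\bbR} \xi(\lambda)\, d\lambda\, g(\lambda),
\]
which is precisely \eqref{3.82}.

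The only auxiliary remarks needed are that all integrals above are well defined, which follows from $\xi, \xi_j \in L^1\big(\bbR; (1+\lambda^2)^{-1} d\lambda\big)$ (see \eqref{3.21}) together with the boundedness of $\widetilde g$, and the elementary inclusion $C_0(\bbR) \subseteq C_\infty(\bbR)$. There is essentially no obstacle here: the entire analytic content is contained in Theorem~\ref{t3.10}, and this corollary is merely the observation that the weight $(\lambda^2+1)^{-1}$ appearing there is harmless once one restricts attention to compactly supported test functions, since multiplication by the polynomial $\lambda^2+1$ maps $C_0(\bbR)$ into itself.
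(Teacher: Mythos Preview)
Your proof is correct and is exactly the argument the paper has in mind: the corollary is stated as ``an immediate consequence of Theorem~\ref{t3.10}'' with no further proof, and the trick of replacing $g$ by $(\lambda^2+1)g \in C_0(\bbR) \subseteq C_\infty(\bbR)$ is precisely what the authors use explicitly in the analogous Corollary~\ref{c3.15}.
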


\begin{remark} \lb{r3.12}
If the operators $H_{0,j}$ and $H_j$ are actually uniformly bounded from below 
with respect to $j\in\bbN$, and hence according to our convention \eqref{2.9}, 
$\xi$ and $\xi_j$, $j\in\bbN$, are chosen to be zero in a fixed (i.e., $j$-independent) neighborhood of $-\infty$, then no condition need be imposed on $g$ in a neighborhood 
of $-\infty$ in \eqref{3.74} and \eqref{3.82} (apart from measurability of $g$, of course).
\end{remark}

Given the decomposition \eqref{3.68}--\eqref{3.73}, and introducing the measures
\begin{align}
\begin{split}
\eta_{\xi_{\pm}} (A)=\int_A\frac{\xi_{\pm} 
(\lambda) d\lambda}{\lambda^2 + 1},
\quad \eta_{\xi_{j,\pm}} (A)=\int_A\frac{\xi_{j,\pm} 
(\lambda) d\lambda}{\lambda^2 + 1}, \; j\in\bbN,&    \lb{3.83} \\ 
\text{ $A \subseteq \bbR$ Lebesgue measurable,}&
\end{split}
\end{align}
we are now ready for the principal result of this paper.

\begin{theorem}\lb{t3.13}
Assume Hypothesis \ref{h3.9}. Then
\begin{equation}\lb{3.84}
\lim_{j\rightarrow \infty}\int_{\R} 
\frac{\xi(\lambda; H_j,H_{0,j}) d\lambda}{\lambda^2 + 1} \, f(\lambda) 
= \int_{\R} \frac{\xi(\lambda;H,H_0) d\lambda}{\lambda^2 + 1} \, f(\lambda),  
\quad f\in C_b(\R).
\end{equation} 
Moreover, \eqref{3.84} holds for every bounded Borel measurable function $f$ that is $\eta_{\xi_+}$ and $\eta_{\xi_-}$-almost everywhere continuous 
on $\R$.  In particular,
\begin{equation}\lb{3.85}
\lim_{j\rightarrow \infty} \int_S 
\frac{\xi(\lambda; H_j,H_{0,j}) d\lambda}{\lambda^2 + 1} 
= \int_S \frac{\xi(\lambda;H,H_0) d\lambda}{\lambda^2 + 1}
\end{equation}
for every set $S$ whose boundary has $\eta_{\xi_+}$ and 
$\eta_{\xi_-}$-measure equal to zero.
\end{theorem}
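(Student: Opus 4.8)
The plan is to deduce the weak convergence asserted in \eqref{3.84} from Theorems \ref{t3.7} and \ref{t3.8}, applied separately to the two nonnegative summands in the decomposition \eqref{3.68}--\eqref{3.73}. To set this up, I would first record that the measures $\eta_{\xi_{\pm}}$ and $\eta_{\xi_{j,\pm}}$ introduced in \eqref{3.83} all lie in $\mathscr{M}_+^b(\R)$: they are positive by the sign-definiteness \eqref{3.70}--\eqref{3.73} (itself an instance of the monotonicity property \eqref{2.11a}), and finite because $\xi_{\pm}, \xi_{j,\pm} \in L^1(\R;(1+\lambda^2)^{-1}d\lambda)$ by \eqref{3.21}. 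Note that the very existence of these pieces rests on Hypothesis \ref{h3.9}, which ensures that the intermediate pairs $(H_0 +_q V_+,H_0)$ and $(H_0 +_q V_+ -_q V_-, H_0 +_q V_+)$, together with their $j$-analogues, still satisfy the standing assumptions.

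Next I would invoke Theorem \ref{t3.10}, whose proof in fact establishes more than \eqref{3.74}: relation \eqref{3.76} states that $\int_\R \xi_{j,\pm}(\lambda)(1+\lambda^2)^{-1} g(\lambda)\,d\lambda \to \int_\R \xi_{\pm}(\lambda)(1+\lambda^2)^{-1} g(\lambda)\,d\lambda$ for all $g \in C_\infty(\R)$, hence in particular for $g \in C_0(\R)$, so that $\eta_{\xi_{j,\pm}} \to \eta_{\xi_{\pm}}$ \emph{vaguely}; and the $n=0$ instance of \eqref{3.78} supplies the convergence of total masses $\eta_{\xi_{j,\pm}}(\R) \to \eta_{\xi_{\pm}}(\R)$. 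The equivalence of (i) and (ii) in Theorem \ref{t3.7} then promotes this to \emph{weak} convergence, $\eta_{\xi_{j,\pm}} \to \eta_{\xi_{\pm}}$ in $\mathscr{M}_+^b(\R)$.

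Applying Theorem \ref{t3.8} to each of the weakly convergent sequences $\{\eta_{\xi_{j,+}}\}_{j\in\bbN}$ and $\{\eta_{\xi_{j,-}}\}_{j\in\bbN}$ yields $\int_\R f\,d\eta_{\xi_{j,\pm}} \to \int_\R f\,d\eta_{\xi_{\pm}}$ for every bounded Borel measurable $f$ that is $\eta_{\xi_{\pm}}$-a.e.\ continuous, in particular for every $f \in C_b(\R)$, and $\eta_{\xi_{j,\pm}}(S) \to \eta_{\xi_{\pm}}(S)$ whenever $\eta_{\xi_{\pm}}(\partial S)=0$. Subtracting, and using $\xi_j = \xi_{j,+}-\xi_{j,-}$ and $\xi = \xi_+-\xi_-$, gives \eqref{3.84} for all $f \in C_b(\R)$, and more generally for all bounded Borel $f$ that are simultaneously $\eta_{\xi_+}$- and $\eta_{\xi_-}$-a.e.\ continuous; taking $f = \chi_S$ with $\eta_{\xi_+}(\partial S) = \eta_{\xi_-}(\partial S) = 0$ produces \eqref{3.85}.

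The substantive step — and the only place where something beyond Theorem \ref{t3.10} is needed — is the vague-to-weak upgrade furnished by Theorem \ref{t3.7}, which is powered entirely by the total-mass convergence coming from the $n=0$ case of \eqref{3.78}. The one point of bookkeeping to be careful about is that the sign splitting \eqref{3.68}--\eqref{3.73} must be carried out first, since Theorems \ref{t3.7} and \ref{t3.8} are statements about positive measures while $\eta_{\xi_j}$ on its own is a signed measure with no well-defined "total mass"; granting Hypothesis \ref{h3.9}, this splitting is routine, and I do not anticipate any genuine obstacle.
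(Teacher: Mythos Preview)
Your proposal is correct and follows essentially the same route as the paper: decompose into the nonnegative pieces $\xi_{j,\pm}$, use \eqref{3.76} for vague convergence of $\eta_{\xi_{j,\pm}}$ and the $n=0$ case of \eqref{3.78} (equivalently, \eqref{3.77} with $m=n=0$) for convergence of total masses, then invoke Theorem \ref{t3.7} to upgrade to weak convergence and Theorem \ref{t3.8} to obtain \eqref{3.84} and \eqref{3.85}. Your additional remarks on why the positive decomposition is necessary and on the role of Hypothesis \ref{h3.9} in making the intermediate pairs admissible are accurate and add clarity without altering the argument.
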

\begin{proof}
Again we decompose 
$\xi$ and $\xi_j$, $j\in\bbN$, as in \eqref{3.68}--\eqref{3.73}. By \eqref{3.76} and 
Corollary \ref{c3.11}, the measure $\eta_{\xi_{j,\pm}}$ vaguely converges to the measure 
$\eta_{\xi_{\pm}}$ as $j\to\infty$, respectively. Moreover, by \eqref{3.77},
\begin{equation}\label{3.86a}
\lim_{j\to\infty} \eta_{\xi_{j,\pm}} (\R) = \eta_{\xi_{\pm}} (\R). 
\end{equation}
Thus, by Theorem \ref{t3.7}, one concludes weak convergence of the sequence of measures $\eta_{\xi_{j,\pm}}$ to the measure $\eta_{\xi_{\pm}}$ as $j\to\infty$.  That 
\eqref{3.84} holds for every bounded Borel measurable function that is 
$\eta_{\xi_{\pm}}$-almost everywhere continuous on $\R$ now follows directly from Theorem \ref{t3.8}. Finally, convergence in \eqref{3.85} is also a direct consequence of 
Theorem \ref{t3.8}.
\end{proof}

As immediate consequences of Theorem \ref{t3.13}, we have the following two results:

\begin{corollary} \lb{c3.14}
Assume Hypothesis \ref{h3.9}. Then convergence in \eqref{3.84} holds for any bounded Borel measurable function that is Lebesgue-almost everywhere continuous.  In particular, 
\eqref{3.85} holds for any set $S$ that is boundaryless with respect to Lebesgue 
measure $($i.e., any set $S$ for which the boundary of $S$ has Lebesgue measure 
equal to zero$)$.
\end{corollary}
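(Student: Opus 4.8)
The plan is to obtain Corollary \ref{c3.14} as an immediate consequence of Theorem \ref{t3.13}, the only extra input being the absolute continuity of the measures $\eta_{\xi_{\pm}}$ with respect to Lebesgue measure on $\bbR$. Indeed, once one knows $\eta_{\xi_{\pm}} \ll d\lambda$, every Lebesgue-null set is $\eta_{\xi_{\pm}}$-null, so the condition ``$f$ is Lebesgue-a.e.\ continuous'' is \emph{stronger} than ``$f$ is $\eta_{\xi_{+}}$- and $\eta_{\xi_{-}}$-a.e.\ continuous'', which is precisely the hypothesis under which Theorem \ref{t3.13} already delivers \eqref{3.84}.

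Concretely, I would first record that by the definition \eqref{3.83} together with the fact that $\xi_{\pm} \in L^1\big(\bbR; (1+\lambda^2)^{-1} d\lambda\big)$ (which holds because $\xi_{\pm}$ are themselves spectral shift functions of pairs of operators satisfying our hypotheses, cf.\ \eqref{3.21} and the step-by-step form-sum decomposition \eqref{3.66}, \eqref{3.67}), each $\eta_{\xi_{\pm}}$ is a finite positive Borel measure given by the Lebesgue-absolutely continuous density $\lambda \mapsto \xi_{\pm}(\lambda)(1+\lambda^2)^{-1}$. In particular $\eta_{\xi_{\pm}}(N) = 0$ whenever $N \subset \bbR$ is a Borel set of Lebesgue measure zero.

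Given this, the remaining argument is essentially bookkeeping. For a bounded Borel measurable $f$ that is continuous off a Lebesgue-null set $N_f$ (one may take $N_f$ to be its discontinuity set, which is automatically Borel), the previous step gives $\eta_{\xi_{\pm}}(N_f) = 0$, so $f$ is $\eta_{\xi_{+}}$- and $\eta_{\xi_{-}}$-a.e.\ continuous, and Theorem \ref{t3.13} applies to yield \eqref{3.84}. For the ``in particular'' statement, if $S \subset \bbR$ has $|\partial S| = 0$, then $\eta_{\xi_{\pm}}(\partial S) = 0$, so $S$ is $\eta_{\xi_{+}}$- and $\eta_{\xi_{-}}$-boundaryless and \eqref{3.85} follows from Theorem \ref{t3.13}; equivalently, apply the first part of the corollary to $f = \chi_S$.

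I do not expect any genuine obstacle: the substance of the corollary is entirely contained in the absolute continuity $\eta_{\xi_{\pm}} \ll d\lambda$, which is visible directly from the definition of the $\xi_{\pm}$ via Lebesgue-absolutely continuous densities. The only point to be careful about is to carry out the reduction through the positive/negative decomposition $\xi = \xi_+ - \xi_-$ and the two associated finite measures $\eta_{\xi_{+}}$, $\eta_{\xi_{-}}$ --- exactly as in the proof of Theorem \ref{t3.13} --- rather than attempting to work with a single signed spectral shift measure directly.
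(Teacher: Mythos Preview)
Your proposal is correct and follows essentially the same approach as the paper: the paper's proof consists of a single sentence noting that $\eta_{\xi_{\pm}}$ are absolutely continuous with respect to Lebesgue measure, whence the statements follow directly from Theorem~\ref{t3.13}. Your write-up is more detailed but the substance is identical.
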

\begin{proof}
Noting that $\eta_{\xi_{\pm}}$ are absolutely continuous with respect to Lebesgue measure, the statements follow directly from Theorem \ref{t3.13}.
\end{proof}

\begin{corollary}\lb{c3.15}
Assume Hypothesis \ref{h3.9}. If $g$ is a bounded Borel measurable function that is compactly supported and Lebesgue almost everywhere continuous on $\R$, then
\begin{equation}
\lim_{j\rightarrow \infty}\int_{\R} \xi(\lambda; H_j,H_{0,j}) d\lambda \, g(\lambda) 
= \int_{\R} \xi(\lambda;H,H_0) d\lambda \, g(\lambda).
\end{equation}
\end{corollary}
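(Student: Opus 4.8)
The plan is to deduce this from Corollary \ref{c3.14} by absorbing the weight $(1+\lambda^2)^{-1}$ into the test function. Concretely, I would set $f(\lambda) := (\lambda^2+1) g(\lambda)$, $\lambda\in\bbR$, so that $\int_{\bbR} \xi(\lambda;H_j,H_{0,j}) \, d\lambda \, g(\lambda) = \int_{\bbR} (1+\lambda^2)^{-1} \xi(\lambda;H_j,H_{0,j}) \, d\lambda \, f(\lambda)$ for every $j\in\bbN$, and likewise with $(H,H_0)$ in place of $(H_j,H_{0,j})$. All the integrals appearing here are absolutely convergent: by \eqref{3.21} one has $\xi, \xi_j \in L^1(\bbR;(1+\lambda^2)^{-1}d\lambda)$, and $f$ is bounded (shown below), so that $(1+\lambda^2)^{-1}\xi_j f = \xi_j g \in L^1(\bbR;d\lambda)$ and the rewriting is legitimate.

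Next I would verify that $f$ meets the hypotheses of Corollary \ref{c3.14}, i.e., that $f$ is bounded, Borel measurable, and Lebesgue-almost everywhere continuous on $\bbR$. Borel measurability is clear since $g$ is Borel measurable and $\lambda\mapsto\lambda^2+1$ is continuous. For boundedness, this is precisely where the compact support of $g$ enters: if $\supp(g)\subseteq K$ with $K\subset\bbR$ compact, then $\|f\|_{L^\infty(\bbR;d\lambda)} \le \big(\sup_{\lambda\in K}(\lambda^2+1)\big)\|g\|_{L^\infty(\bbR;d\lambda)} < \infty$. For Lebesgue-a.e.\ continuity, note that $f$ is the pointwise product of the everywhere-continuous function $\lambda\mapsto\lambda^2+1$ with $g$; a product of two functions is continuous at every point at which both factors are continuous, so the discontinuity set of $f$ is contained in that of $g$, which is Lebesgue-null by assumption.

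With $f$ in hand the conclusion is immediate: Corollary \ref{c3.14} applied to $f$ gives $\lim_{j\to\infty}\int_{\bbR}(1+\lambda^2)^{-1}\xi(\lambda;H_j,H_{0,j})\,d\lambda\,f(\lambda) = \int_{\bbR}(1+\lambda^2)^{-1}\xi(\lambda;H,H_0)\,d\lambda\,f(\lambda)$, which by the rewriting above is exactly the assertion of the corollary. I do not anticipate any genuine obstacle here; the only substantive point is that the compact support of $g$ cannot be dropped, since it is what keeps $f=(\lambda^2+1)g$ bounded and hence admissible in Corollary \ref{c3.14} (equivalently, in Theorem \ref{t3.13}, using that $\eta_{\xi_\pm}$ are absolutely continuous with respect to Lebesgue measure).
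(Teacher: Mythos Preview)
Your proof is correct and is essentially the same as the paper's: both set $f(\lambda)=(\lambda^2+1)g(\lambda)$ and invoke the weighted convergence result, with compact support of $g$ ensuring boundedness of $f$ and Lebesgue-a.e.\ continuity of $g$ (hence $\eta_{\xi_\pm}$-a.e.\ continuity) ensuring admissibility. The only cosmetic difference is that you route through Corollary~\ref{c3.14}, whereas the paper appeals directly to \eqref{3.84} in Theorem~\ref{t3.13} together with the absolute continuity of $\eta_{\xi_\pm}$.
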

\begin{proof}
If $g$ satisfies the hypotheses of Corollary \ref{c3.15}, then choosing 
$f(\lambda):=(\lambda^2 + 1)g(\lambda)$ in \eqref{3.84} yields the result, noting that 
$f$ is a bounded ($g$ has compact support) Borel measurable function and is continuous Lebesgue-almost everywhere (and thus $\eta_{\xi_{\pm}}$-almost everywhere) on $\R$.
\end{proof}

\begin{remark}
We briefly summarize the instrumental role of convergence of the determinants, 
\eqref{3.27}, in this work.  The proof of \eqref{3.25} in Lemma \ref{l3.3} goes by simple induction on $n\in\bbN$ in \eqref{3.44}, and it is precisely \eqref{3.27} that yields the first induction step, $n=1$, in \eqref{3.44}.  Convergence of the determinants, 
\eqref{3.27}, also proves indispensable in the proof of {\it weak} convergence of the spectral shift functions, that is, \eqref{3.84}. To go from {\it vague} (cf.\ \eqref{3.74}) to \textit{weak} (cf.\ \eqref{3.84}) convergence, we simply apply the abstract Theorem 
\ref{t3.7} together with convergence of the total masses, \eqref{3.86a}.  However, it is \eqref{3.77}, and therefore \eqref{3.27}, that guarantees the requisite convergence of total masses, \eqref{3.86a}.
\end{remark}

For applications to multi-dimensional Schr\"odinger operators we need to extend the assumptions in Hypothesis \ref{h3.9} a bit:

\begin{hypothesis} \lb{h3.16}
Suppose the assumptions made in Hypothesis \ref{h3.9} with the exception of the trace class assumptions \eqref{3.4}, \eqref{3.6a}, and \eqref{3.9}. \\ 
$(ix)$ Assume that for some $($and hence for all\,$)$ $z \in \rho(H_0)$,
\begin{equation}
\ol{V_2 (H_0 - z I_{\cH})^{-1} V_1^*}, \, \ol{V_{2,j} (H_{0,j} - z I_{\cH})^{-1} V_{j,1}^*}\oplus 0, 
 \in \cB_2(\cH),   \quad j\in\bbN,   \lb{3.89}
\end{equation}
and that 
\begin{align}
\begin{split} 
& \lim_{z \downarrow -\infty} \big\|\ol{V_2 (H_0 - z I_{\cH})^{-1} V_1^*}\big\|_{\cB_2(\cH)} 
= 0.     \lb{3.90} \\
& \lim_{z \downarrow -\infty} \big\|\ol{V_{2,j} (H_{0,j} - z I_{\cH})^{-1} V_{1,j}^*}\oplus 0
\big\|_{\cB_2(\cH)} 
= 0, \quad j\in\bbN.     
\end{split} 
\end{align}
$(x)$ Suppose that for some $($and hence for all\,$)$ $z\in \C\backslash \R$, 
\begin{equation}
\lim_{j \to \infty}\big\|\big[\ol{V_{2,j}(H_{0,j} - z I_{\cH_j})^{-1}V_{1,j}^*}\oplus 0\big] 
- \ol{V_2 (H_0 - z I_{\cH})^{-1}V_1^*}\big\|_{\cB_2(\cH)} =0.     \lb{3.91} 
\end{equation} 
\end{hypothesis}

\begin{theorem} \lb{t3.17}
Assume Hypothesis \ref{h3.16}. Then the assertions of Theorem \ref{t3.10}, 
Corollary \ref{c3.11}, Theorem \ref{t3.13}, and Corollaries \ref{c3.14} and 
\ref{c3.15} hold. 
\end{theorem}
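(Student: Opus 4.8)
The plan is to follow the proofs of Theorem~\ref{t3.10}, Corollary~\ref{c3.11}, Theorem~\ref{t3.13}, and Corollaries~\ref{c3.14}, \ref{c3.15} essentially line by line, tracking the finitely many spots where the now-dropped trace class hypotheses \eqref{3.4}, \eqref{3.6a}, \eqref{3.9} entered and replacing them by their Hilbert--Schmidt counterparts \eqref{3.89}--\eqref{3.91}. The key point is that the whole chain funnels through Lemma~\ref{l3.3}: once the conclusions \eqref{3.24}, \eqref{3.25} are re-established under Hypothesis~\ref{h3.16}, the remaining proofs invoke only \eqref{3.25}, the Stone--Weierstrass device Lemma~\ref{l3.4}, and the purely measure-theoretic Theorems~\ref{t3.7} and \ref{t3.8}, none of which distinguishes the trace class from the Hilbert--Schmidt setting, so they carry over verbatim (applied, as in the proof of Theorem~\ref{t3.10}, to the intermediate pairs in the decomposition \eqref{3.68}--\eqref{3.73}, whose defining factorizations inherit the requisite Hilbert--Schmidt and convergence properties from those of $V$, $V_j$ via the resolvent identities). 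One also checks that Lemma~\ref{l3.2} survives unchanged: \eqref{3.13}, \eqref{3.14} display $(H-zI_\cH)^{-1}-(H_0-zI_\cH)^{-1}$ and its $j$-analog as a product of the two Hilbert--Schmidt factors furnished by \eqref{3.5}, \eqref{3.6} with a bounded one, so the trace class relations \eqref{3.16}, \eqref{3.17} still hold, the spectral shift functions together with the trace formulas \eqref{3.20}, \eqref{3.21}, \eqref{3.30}, \eqref{3.31} remain available, and the strong convergence \eqref{3.15} is unaffected.

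The perturbation determinants $D(z)$, $D_j(z)$ of \eqref{3.23} are now read as the modified Fredholm determinants ${\det}_{2,\cH}(\cdot)$, ${\det}_{2,\cH_j}(\cdot)$ of Theorem~\ref{t2.3}. The analog of \eqref{3.27} is then immediate from \eqref{3.91}, the ${\det}_{2,\cH}$-analog of the identity \eqref{3.26}, and continuity of ${\det}_{2,\cH}(I_\cH+\cdot)$ in $\|\cdot\|_{\cB_2(\cH)}$, which gives \eqref{3.24}. The proof of \eqref{3.38} (trace-norm convergence of the resolvent differences), and hence of \eqref{3.43}, needs only cosmetic changes: \eqref{3.91} still yields the strong convergence \eqref{3.41} (Hilbert--Schmidt convergence forces norm, hence strong, convergence), \eqref{3.10} together with Lemma~\ref{l2.4} still yields the $\cB_2$-convergence \eqref{3.42}, and \eqref{3.11} then promotes the product in \eqref{3.39} to $\cB_1$-convergence exactly as before.

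The one genuinely new point is the base case \eqref{3.45} ($n=1$) of the induction in \eqref{3.44}. In the Hilbert--Schmidt framework relations \eqref{3.28}, \eqref{3.29} must be replaced, using \eqref{2.28} in place of \eqref{2.7}, by
\[
\ln\big(D(z)/D(a)\big)=(z-a)\int_{\R}\frac{\xi(\lambda)\,d\lambda}{(\lambda-a)(\lambda-z)}-[\eta(z)-\eta(a)],
\]
and its $j$-analog with $\eta_j$, where $\eta'(z)={\tr}_\cH\big(\ol{(H_0-zI_\cH)^{-1}V(H_0-zI_\cH)^{-1}}\big)$ and similarly for $\eta_j'$; since $H_0$ is bounded from below, $\rho(H_0)$ is connected, so $\eta(z)-\eta(a)$ (and likewise $\eta_j(z)-\eta_j(a)$) is unambiguous. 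Thus \eqref{3.45} will follow from \eqref{3.24} once one proves $\lim_{j\to\infty}[\eta_j(z)-\eta_j(a)]=\eta(z)-\eta(a)$ for the parameters $a=\pm i$, $z=\mp i$ used in the proof of Theorem~\ref{t3.10}. I would establish this by integrating $\eta_j'=\tau_j$ along a fixed contour $\gamma$ from $a$ to $z$ lying in $\rho(H_0)\cap\bigcap_{j\in\bbN}\rho(H_{0,j})$ (which necessarily meets $\bbR$, to the left of all spectra), writing $\tau_j(w)={\tr}_\cH\big(\big[\ol{(H_{0,j}-wI_{\cH_j})^{-1}V_{1,j}^*}\oplus 0\big]\big[V_{2,j}(H_{0,j}-wI_{\cH_j})^{-1}\oplus 0\big]\big)$, using \eqref{3.10}, \eqref{3.11} and Lemma~\ref{l2.4} for $\cB_1$-convergence of the integrand at each $w\in\gamma$, and using the resolvent identity to obtain a $j$-uniform bound for $|\tau_j(\cdot)|$ on the compact set $\gamma$; dominated convergence along $\gamma$ then finishes the argument, and with \eqref{3.45} restored the induction in \eqref{3.44} delivers \eqref{3.25}, so Lemma~\ref{l3.3} and all the asserted consequences follow. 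The main obstacle I anticipate is precisely this control of the $\eta$-correction: the two parameters lie in opposite half-planes, so the contour must meet $\bbR$, and it has to avoid $\sigma(H_{0,j})$ for every $j$ at once while carrying a $j$-uniform integrable bound on $\tau_j$ --- both unproblematic when $\{H_{0,j}\}_{j\in\bbN}$ is uniformly bounded from below, as in the concrete Schr\"odinger applications (cf.\ Remark~\ref{r3.12}), but requiring more care in general, e.g.\ by propagating the estimates from a single base point $z_0$ to a neighborhood of $\gamma$ uniformly in $j$ via the resolvent identity and \eqref{3.10}, \eqref{3.11}.
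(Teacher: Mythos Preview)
Your overall plan is correct and matches the paper's: reduce everything to re-proving Lemma~\ref{l3.3} under Hypothesis~\ref{h3.16}, replacing Fredholm determinants by ${\det}_{2}$ and accounting for the extra $\eta$-term from \eqref{2.28}; you also correctly observe that \eqref{3.38}, \eqref{3.43}, and Lemma~\ref{l3.2} go through unchanged using only \eqref{3.5}, \eqref{3.6}, \eqref{3.10}, \eqref{3.11}, \eqref{3.91}.

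Where you diverge from the paper is in handling the $\eta$-correction for the base case \eqref{3.45}. You propose to write $\eta_j(z)-\eta_j(a)=\int_\gamma \eta_j'(w)\,dw$ along a contour $\gamma$ joining $a$ and $z$, and then rightly flag the obstacle: for $a=\pm i$, $z=\mp i$ the contour must cross $\bbR$, so it has to sit in $\bigcap_{j}\rho(H_{0,j})$ with a $j$-uniform bound on $\tau_j$, which is not guaranteed by Hypothesis~\ref{h3.16} (the $H_{0,j}$ are only individually, not uniformly, bounded from below). The paper sidesteps this entirely by noting that the antiderivative has the closed form
\[
\eta_j(z)-\eta_j(a)=(z-a)\,{\tr}_{\cH_j}\Big(V_{2,j}(H_{0,j}-zI_{\cH_j})^{-1}\,\ol{(H_{0,j}-aI_{\cH_j})^{-1}V_{1,j}^*}\Big),
\]
which follows from the first resolvent identity (differentiate in $z$ and check both sides vanish at $z=a$). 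This is precisely the extra trace term the paper records in \eqref{3.96}, \eqref{3.97}. The point is that the operator under the trace is a product of the two $\cB_2(\cH)$ factors from \eqref{3.5}, \eqref{3.6}, so its $j\to\infty$ convergence (that is, \eqref{3.99}) follows immediately from \eqref{3.10}, \eqref{3.11}, with no contour, no real-axis crossing, and no uniform spectral lower bound needed. Combined with \eqref{3.98} this yields \eqref{3.100} directly. Your route would work under the extra uniformity you describe, but the resolvent-identity closed form is what dissolves the obstacle in general and is the one idea you are missing.
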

\begin{proof}
It suffices to delineate the necessary changes in the proofs due to the 
Hilbert--Schmidt hypotheses \eqref{3.89}, \eqref{3.90} as opposed to the trace class 
assumptions \eqref{3.4}, \eqref{3.6a}, and \eqref{3.9}. 

With \eqref{3.20} still valid, we abbreviate the modified perturbation determinants by 
\begin{align}
D_2(z)& = {\det}_{2,\cH}\big(I_{\cH}+\ol{V_2 (H_{0} - z I_{\cH})^{-1}V_1^*}\big), 
\quad z\in \rho(H) \cap \rho(H_0),     \no \\
D_{2,j}(z)& = {\det}_{2,\cH_j}\big(I_{\cH_j} + \ol{V_{2,j}(H_{0,j} - z I_{\cH_j})^{-1}V_{1,j}^*}\big), \quad z\in \rho(H_j) \cap \rho(H_{0,j}), \; j\in\bbN.   \lb{3.93}  
\end{align}

Consequently, by \eqref{2.28}, for $z\in\bbC\backslash\bbR$, 
\begin{align}
\frac{d}{dz}\ln D_2(z) & =\int_{\bbR}\frac{\xi(\lambda) d\lambda}{(\lambda-z)^2}  
- {\tr}_{\cH} \big((H_0 - z I_{\cH})^{-1}V(H_0 - z I_{\cH})^{-1}\big),    \no \\ 
\frac{d}{dz}\ln D_{2,j}(z) & =\int_{\bbR}\frac{\xi_j(\lambda) d\lambda}{(\lambda-z)^2}  
- {\tr}_{\cH_j} \big((H_0 - z I_{\cH_j})^{-1}V_j ({H_0} - z I_{\cH_j})^{-1}\big), \quad j\in\bbN.  
\lb{3.94}
\end{align}
In analogy to \eqref{3.28}, \eqref{3.29}, one then obtains ($a,z\in\bbC\backslash\bbR$)
\begin{align}
(z-a) \int_{\bbR}\frac{\xi(\lambda) d\lambda}{(\lambda-a)(\lambda-z)} 
& = (z-a) {\tr}_{\cH}\big(V_2 (H_0 - z I_{\cH})^{-1}\ol{(H_0 - a I_{\cH})^{-1}V_1^*} \big)\no\\
&\quad + \ln \bigg(\frac{D_2(z)}{D_2(a)} \bigg),     \lb{3.96} \\
(z-a) \int_{\bbR}\frac{\xi_j(\lambda) d\lambda}{(\lambda-a)(\lambda-z)} 
& = (z-a) \tr_{\cH_j}\big(V_{2,j} (H_{0,j} - z I_{\cH_j})^{-1} 
\ol{(H_{0,j} - a I_{\cH_j})^{-1}V_{1,j}^*} \big)\no\\
&\quad + \ln \bigg(\frac{D_{2,j}(z)}{D_{2,j}(a)}\bigg), \quad j\in\bbN.    \lb{3.97}
\end{align}
By  \eqref{3.91}, 
\begin{equation} 
\lim_{j\to\infty} D_{2,j}(z) = D_2(z),    \lb{3.98}
\end{equation}
and by \eqref{3.10}, \eqref{3.11}, 
\begin{align} 
\begin{split} 
& \lim_{j\to\infty} {\tr}_{\cH_j}\big(V_{2,j} (H_{0,j} - z)^{-1}\ol{({H_{0,j}}-a)^{-1}V_{1,j}^*} 
\big)    \lb{3.99} \\
& \quad = {\tr}_{\cH}\big(V_2 (H_0 - z I_{\cH})^{-1} \ol{(H_0 - a I_{\cH})^{-1} V_1^*} \big).
\end{split}
\end{align} 
At this point one can follow the proof of Lemma \ref{l3.3} step by step, implying the 
validity of \eqref{3.43}. In addition, \eqref{3.94}--\eqref{3.99} yield 
\begin{equation}
\lim_{j\to\infty} \int_{\bbR}\frac{\xi_j(\lambda) d\lambda}{(\lambda-a)(\lambda-z)}  
= \int_{\bbR}\frac{\xi(\lambda) d\lambda}{(\lambda-a)(\lambda-z)}, \quad 
a, z \in \bbC\backslash \bbR,    \lb{3.100} 
\end{equation}
and hence the first induction step \eqref{3.45} also holds under Hypothesis \ref{h3.16}, implying the assertions in Lemma \ref{l3.3}. The latter is the crucial input for the proof 
of Theorem \ref{t3.10} and hence for Corollary \ref{c3.11}, which both extend to the 
current Hypothesis \ref{h3.16}.

Finally, the proofs of Theorem \ref{t3.13} and Corollaries \ref{c3.14} and \ref{c3.15} 
extend without change under Hypothesis \ref{h3.16}. 
\end{proof}

\section{Applications to Schr\"odinger operators}  \lb{s4}

In our final section we briefly illustrate the applicability of Theorems \ref{t3.13} 
and \ref{t3.17} to (multi-dimensional) Schr\"odinger operators. 

\medskip 

\noindent 
$\mathbf{(I)}$ {\bf The one-dimensional case.} 

Assuming 
\begin{equation}
V \in L^1(\bbR; dx) \, \text{ real-valued},    \lb{4.1}
\end{equation}
and introducing the differential expression $\tau$ by
\begin{equation}
\tau = - \f{d^2}{dx^2} + V(x), \quad x \in J,    \lb{4.2}
\end{equation}
with $J \subseteq \bbR$ an appropriate open interval, we introduce the self-adjoint  
Schr\"odinger operator $H_{(a,b),\alpha,\beta}$ in $L^2((a,b); dx)$,  with Dirichlet boundary conditions at $x=a,b$ 
\begin{align}
& (H_{(a,b),D} f)(x) = (\tau f)(x), \quad x \in (a,b),   \no \\
& \, f \in \dom(H_{(a,b),D}) =\big\{g \in L^2((a,b); dx) \, \big|\, 
g, g' \in AC([a,b]);   \lb{4.3} \\ 
& \hspace*{3.55cm} g(a)=g(b)=0; \, \tau f \in L^2((a,b); dx)\big\},    \no 
\end{align}
and the self-adjoint Schr\"odinger operator $H$ in $L^2(\bbR; dx)$, 
\begin{align}
\begin{split} 
& (H f)(x) = (\tau f)(x), \quad x \in \bbR,    \\
& \, f \in \dom(H) =\big\{g \in L^2(\bbR; dx) \, \big| \, 
g, g' \in AC_{\loc}(\bbR);  \, \tau f \in L^2(\bbR; dx)\big\}.     \lb{4.4}
\end{split}
\end{align}
Here $AC([a,b])$ (resp., $AC_{\loc}(\bbR)$) abbreviates the set of absolutely 
continuous functions on $[a,b]$ (resp., the set of locally absolutely continuous 
functions on $\bbR$).  

In the special case where $V=0$ a.e.\ on $\bbR$, the operators in \eqref{4.3} and 
\eqref{4.4} are denoted by $H_{0,(a,b),D}$ and $H_0$, respectively. 

The Dirichlet Green's functions (i.e., the integral kernels of the resolvents) associated 
with  $H_{0,(a,b),D}$ and $H_0$ are then given by 
\begin{align}
& G_{0,(a,b),D}(z,x,x') = (H_{0,(a,b),D} - z I_{(a,b)})^{-1} (x,x')  \no \\
& \quad = \f{1}{z^{1/2}\sin(z^{1/2}(b-a))} \begin{cases} 
\sin(z^{1/2}(x-a)) \sin(z^{1/2}(b-x')), & a \leq x \leq x' \leq b, \\
\sin(z^{1/2}(x'-a)) \sin(z^{1/2}(b-x)), & a \leq x' \leq x \leq b,
\end{cases}    \no \\
& \hspace*{7cm} z \in\bbC \backslash\big\{n^2 \pi^2 (b-a)^{-2}\big\}_{n\in\bbN},   \lb{4.5} 
\end{align}
and 
\begin{align} 
\begin{split} 
G_{0}(z,x,x') = (H_0 - z I_{\bbR})^{-1} (x,x') 
= \f{i}{2 z^{1/2}} e^{i z^{1/2} |x-x'|}, \quad  x, x' \in \bbR,&   \lb{4.6} \\
z \in \bbC\backslash [0,\infty), \; \Im(z^{1/2}) \geq 0,&   
\end{split} 
\end{align}
respectively, where $I_J$ denotes the identity operator in $L^2(J; dx)$ for 
$J\subseteq \bbR$ an interval. 

Moreover, we also recall the integral kernels for the square root of resolvents 
(cf.\ \cite[p.\ 325]{GH03} and \cite{GN11}),  
\begin{align} \lb{4.7}
& R_{0,(a,b),D}^{1/2}(z,x,x') = (H_{0,(a,b),D} - z I_{(a,b)})^{-1/2} (x,x')     \no \\
& \quad = \f{1}{\pi} \int_0^\infty dt \, t^{-1/2} G_{0,(a,b),D}(z-t,x,x'),     \\ 
& \hspace*{.1cm} z \in\bbC \backslash\big\{n^2 \pi^2 (b-a)^{-2}\big\}_{n\in\bbN}, 
\; x, x' \in [a,b],   \no 
\end{align}
and  
\begin{align} \lb{4.8}
\begin{split} 
R_0^{1/2}(z,x,x') = (H_0 - z I_{\bbR})^{-1/2} (x,x') 
= \pi^{-1} H_0^{(1)} (z^{1/2}|x-x'|),&    \\
z \in \bbC\backslash [0,\infty), \; \Im(z^{1/2}) \geq 0, \; x, x' \in \bbR,& 
\end{split} 
\end{align}
where $H_0^{(1)}(\cdot)$ denotes the Hankel function of the first kind and order zero 
(cf.\ \cite[Sect.\ 9.1]{AS72}). Moreover, employing domain monotonicity for Dirichlet Green's functions (see, e.g., \cite[Sect.\ 1.VII.6]{Do84}), that is, 
\begin{align}
\begin{split}
0 \leq G_{0,(a,b),D}(-E,x,x') \leq G_{0,(a',b'),D}(-E,x,x') \leq
G_0 (-E,x,x'),&   \\
x, x' \in [a,b] \subseteq [a',b'], \; E>0,&     \lb{4.10}
\end{split} 
\end{align}
and inserting \eqref{4.10} into \eqref{4.7} yields
\begin{equation}\lb{4.11}
0 \leq R_{0,(a,b),D}^{1/2} (-E,x,x') \leq 
R_0^{1/2} (-E,x,x'), \quad x,x'\in [a,b], \; E>0.
\end{equation}
In fact, the upper bound in \eqref{4.10} can be made more explicit by noting
\begin{align}
0 & \leq G_{0,(a,b),D}(-E,x,x')   \no \\
& = \f{1}{2 E^{1/2}} 
\f{\big[e^{E^{1/2}(x-a)} - e^{-E^{1/2}(x-a)}\big]\big[e^{E^{1/2}(b-x')} 
- e^{-E^{1/2}(b-x')}\big]}{e^{E^{1/2}(b-a)} - e^{-E^{1/2}(b-a)}}   \no \\
& = G_0 (-E,x,x') - \f{1}{2 E^{1/2}}  
\f{\big[e^{E^{1/2}(x+x'-b-a)} - e^{E^{1/2}(a+x'-b-x)}\big]}
{e^{E^{1/2}(b-a)} - e^{-E^{1/2}(b-a)}}     \no \\
& \quad - \f{1}{2 E^{1/2}} 
\f{\big[e^{E^{1/2}(b+a-x-x')} - e^{E^{1/2}(x+a-b-x')}\big]}
{e^{E^{1/2}(b-a)} - e^{-E^{1/2}(b-a)}},    \no \\ 
& \leq G_0 (-E,x,x'),   \quad a \leq x \leq x' \leq b,\lb{4.12}
\end{align}
and analogously for $a \leq x'\leq x \leq b$. (One verifies that all square brackets in 
\eqref{4.11} are nonnegative.) The relations in \eqref{4.12} also show that 
\begin{equation}
\lim_{a \downarrow -\infty, \, b \uparrow \infty} G_{0,(a,b),D}(-E,x,x') = G_0 (-E,x,x') 
\, \text{ pointwise,}    \lb{4.13}
\end{equation}
that is, for fixed $E<0$ and $x, x' \in\bbR$. (The latter is easily seen to extend to all 
fixed $z\in\bbC\backslash\bbR$.)

Next, one factors $V$ as
\begin{equation}
V(x)=u(x)v(x),\quad v(x)=|V(x)|^{1/2}, \quad u(x)=v(x) \, \sgn(V(x)), \quad 
x \in \bbR,    \lb{4.14}
\end{equation}
and introduces  
\begin{equation}
V_{(a,b)}(x) = V(x)|_{(a,b)}, \quad v_{(a,b)}(x) = v(x)|_{(a,b)}, \quad 
u_{(a,b)}(x) = u(x)|_{(a,b)}, \quad x \in (a,b).    \lb{4.15}
\end{equation}
Then in the notation employed in Section \ref{s3}, 
\begin{align}
& \text{$u$ corresponds to $V_2$, \;\; $v$ corresponds to $V_1^*$} \\
& \text{$u_{(a,b)}$ corresponds to $V_{2,j}$, \;\; 
$v_{(a,b)}$ corresponds to $V_{1,j}^*$,} \\
& \text{$a\downarrow -\infty$, $b\uparrow \infty$ corresponds to $j \to \infty$, etc.} 
\end{align}
Moreover, the estimate 
\begin{equation}
0 \leq \big|H_0^{(1)}(x)\big| \leq C \, \ln\bigg(\frac{ex}{1+x}\bigg) 
\frac{e^{-x}}{2\pi x^{1/2}+1},  \quad  x>0,    \lb{4.17a}
\end{equation}
for a suitable constant $C>0$ (cf.\ \cite[Sect.\ 9.6]{AS72} for the proper asymptotic 
relations as $x\downarrow 0$ and $x \to\infty$, implying \eqref{4.17a}) then readily 
proves that
\begin{align} 
& u (H_0 + I_{\bbR})^{-1/2}, \, u (H_0 + I_{\bbR})^{-1} \in \cB_2\big(L^2(\bbR; dx)\big),  
\lb{4.17b} \\
& \ol{u (H_0 + I_{\bbR})^{-1}v} = \big[u (H_0 + I_{\bbR})^{-1/2}\big] 
\big[v (H_0 + I_{\bbR})^{-1/2}\big]^* \in \cB_1\big(L^2(\bbR; dx)\big),   \lb{4.17c} \\  
& \big[(H + I_{\bbR})^{-1}-(H_0 + I_{\bbR})^{-1}\big] \in \cB_1(L^2(\bbR;dx)).  \lb{4.17d}
\end{align} 

At this point it is possible to verify that each item in Hypothesis \ref{h3.9} applies 
and hence that Theorem \ref{t3.10}, Corollary \ref{c3.11}, Theorem \ref{t3.13}, and 
Corollaries \ref{c3.14} and \ref{c3.15} all hold in the context of Dirichlet boundary 
conditions at $x = a, b$. 

The case of a half-line with $\bbR$ replaced by $[0,\infty)$ and 
$(a,b)$ by $(0,R)$, $R>0$, has been dealt with in great detail in \cite{GN11} 
(in part, using techniques developed in \cite{GM03}). In 
\cite{GN11} also the case of all separated self-adjoint boundary conditions was discussed 
in depth, by invoking Krein-type resolvent equations that reduce general separated boundary conditions to the case of Dirichlet boundary conditions. This strategy 
also applies to all separated self-adjoint boundary conditions in the current case of 
$\bbR$ and $(a,b)$; we omit further details at this point. (See also 
\cite[Chs.\ 2,3,5,9]{Sc81} for a detailed treatment of the case $n=1$.) 

\medskip

\noindent 
$\mathbf{(II)}$ {\bf The two- and three-dimensional case (with Dirichlet boundary conditions).} 

Assuming 
\begin{align}
& V\in \cR_{2,\delta} \, \text{ for some $\delta>0$, real-valued, if $n=2$,}   \lb{4.18} \\
& V\in \cR_3 \cap L^1(\bbR^3;d^3x), \, \text{ real-valued, if $n=3$,}    \lb{4.19}
\end{align}
where 
\begin{align}
\cR_{2,\delta}&=\big\{V:\bbR^2\rightarrow \bbC, \, \text{measurable} 
\,\big|\, V^{1+\delta},(1+|\cdot|^{\delta})V\in L^1(\bbR^2;d^2 x)\big\},    \lb{4.20} \\
\cR_3&=\bigg\{V:\bbR^3\rightarrow \bbC, \, \text{measurable} \,\bigg|\,  
\int_{\bbR^6}d^3x \, d^3x' \, \f{|V(x)||V(x')|}{|x-x'|^2} <\infty\bigg\},  
\lb{4.21}
\end{align}
(with $\cR_3$ the set of {\it Rollnik} potentials), we introduce the differential expression
\begin{equation}
\tau = - \Delta + V(x), \quad x \in \Omega,    \lb{4.22}
\end{equation}
with $\Omega \subseteq \bbR^n$, $n=2,3$, an appropriate open set. 

Denoting by $B_R\subseteq \bbR^n$, $n=2,3$, the open ball of radius $R>0$, centered 
at the origin, we now introduce the self-adjoint Schr\"{o}dinger operator $H_{B_R,D}$ in 
$L^2(B_R; d^n x)$, $n=2,3$, with {\it Dirichlet} boundary conditions at $\partial B_R$, by
\begin{align}
\begin{split}
& (H_{B_R,D} f)(x) = (\tau f)(x), \quad x \in B_R,    \\
& \, f \in \dom(H_{B_R,D}) = \big\{g \in L^2(B_R; d^n x) \,\big| \, g \in 
H^1_0 (B_R); \, \tau g \in L^2(B_R; d^n x)\big\},  \lb{4.23} 
\end{split}
\end{align}
and  the self-adjoint Schr\"{o}dinger operator $H$ in $L^2(\bbR^n; d^n x)$, $n=2,3$, by
\begin{align}
\begin{split}
& (H f)(x) = (\tau f)(x), \quad x \in \bbR^n,    \\
& \, f \in \dom(H) = \big\{g \in L^2(\bbR^n; d^n x) \,\big| \, g \in 
H^1 (\bbR^n); \, \tau g \in L^2(\bbR^n; d^n x)\big\}.   \lb{4.24} 
\end{split}
\end{align} 
Here $\tau f = - \Delta f + V f$ is interpreted in the sense of distributions (i.e., in 
$\cD'(B_R)$, resp., $\cD'(\bbR^n)$, $n=2,3$).

In the special case where $V=0$ a.e.\ on $\bbR^n$, the operators in \eqref{4.23} and 
\eqref{4.24} are denoted by $H_{0,B_R,D}$ and $H_0$, respectively, and given by
\begin{align}
& H_{0,B_R,D} = - \Delta,  \quad 
\dom(H_{0,B_R,D}) = \big\{g \in L^2(B_R; d^n x) \,\big| \, g \in 
H^{1}_0 (B_R) \cap H^{2} (B_R);    \no \\
& \hspace*{7.4cm} - \Delta g \in L^2(B_R; d^n x)\big\},    \lb{4.25} \\
& H_0 = - \Delta, \quad  \dom(H_0) = H^2 (\bbR^n).       \lb{4.26} 
\end{align} 

The method of images (cf.\ \cite[p.\ 264]{CH89}) then permits one to explicitly compute the Dirichlet Green's function for the ball $B_R$ as 
\begin{align}
& G_{0,B_R,D} (z,x,x') = (H_{0,B_R,D} - z I_{B_R})^{-1}(x,x')  \no \\ 
& \quad = \psi_n(z,|x-x'|) - \psi_n\bigg(z,\frac{|x'|}{R}\bigg|x-\frac{R^2}{|x'|^2}x' 
\bigg|\bigg),  \lb{4.27} \\
& \hspace*{2.15cm}  z \in \bbC \backslash  [0,\infty), \; x \neq x', \; x, x' \in B_R,   \no 
\end{align}
where
\begin{equation}
\psi_n(z,r) = \begin{cases}
(i/4) H^{(1)}_0(z^{1/2} r), & n=2, \\
e^{i z^{1/2} r}/[4\pi r], & n=3, 
\end{cases}  \quad z \in \bbC \backslash  [0,\infty), \; \Im(z^{1/2}) \geq 0, \;  r > 0,   \no 
\end{equation}
with $H^{(1)}_0(\cdot)$ again the Hankel function of the 1st kind and order zero.  
Similarly, 
\begin{align}
G_0(z,x,x') &= (H_0 - z I_{\bbR^n})^{-1}(x,x') = \psi_n(z,|x-x'|)   \no \\ 
& = \begin{cases}
(i/4) H^{(1)}_0(z^{1/2} |x-x'|), & n=2, \\
e^{i z^{1/2} |x-x'|}/[4\pi |x-x'|], & n=3, 
\end{cases}   \lb{4.28} \\
& \hspace*{-1.6cm} z \in \bbC \backslash  [0,\infty), \; \Im(z^{1/2}) \geq 0, \; 
x \neq x', \; x, x' \in \bbR^n, \; n=2,3.  \no 
\end{align}
Here $I_{\Omega}$ denotes the identity operator in $L^2(\Omega; d^n x)$ for 
$\Omega \subseteq \bbR^n$ an open set. Consequently, 
\begin{equation}
\big|{ G_{0,B_R,D} (z,x,x')}\big| \leq C \psi_n (z,|x - x'|), \quad  
x \neq x', \; x, x' \in \bbR^n, \; n=2,3,    \lb{4.29}
\end{equation}
for some constant $C>0$. Moreover, one gets as a pointwise limit
\begin{align}
\lim_{R\to\infty} G_{0,B_R,D} (z,x,x') = G_0(z,x,x'), \quad 
z \in \bbC \backslash  [0,\infty), \; x \neq x', \; x, x' \in \bbR^n. 
\end{align} 

Next, one again factors $V$ as
\begin{equation}
V(x)=u(x)v(x),\quad v(x)=|V(x)|^{1/2}, \quad u(x)=v(x) \, \sgn(V(x)), \quad 
x \in \bbR^n,    \lb{4.35}
\end{equation}
and introduces  
\begin{equation}
V_{B_R}(x) = V(x)|_{B_R}, \quad v_{B_R}(x) = v(x)|_{B_R}, \quad 
u_{B_R}(x) = u(x)|_{B_R}, \quad x \in B_R.    \lb{4.36}
\end{equation}
Then in the notation employed in Section \ref{s3}, 
\begin{align}
& \text{$u$ corresponds to $V_2$, \;\; $v$ corresponds to $V_1^*$} \\
& \text{$u_{B_R}$ corresponds to $V_{2,j}$, \;\; 
$v_{B_R}$ corresponds to $V_{1,j}^*$,} \\
& \text{$B_R \to \bbR^n$ as $R\uparrow \infty$ corresponds to $j \to \infty$, etc.,} 
\end{align}
and using again the estimate \eqref{4.17a} in the case $n=2$, one concludes that 
for $n=2,3$, 
\begin{align}
& u (H_0 + I_{\bbR^n})^{-1/2} \in \cB_4\big(L^2(\bbR^n; d^n x)\big), \quad 
u (H_0 + I_{\bbR^n})^{-1} \in \cB_2\big(L^2(\bbR^n; d^n x)\big),   \lb{4.40} \\
& \ol{u (H_0 + I_{\bbR^n})^{-1}v} = \big[u (H_0 + I_{\bbR^n})^{-1/2}\big] 
\big[v (H_0 + I_{\bbR^n})^{-1/2}\big]^* \in \cB_2\big(L^2(\bbR^n; d^n x)\big),   \lb{4.41}  \\
& \big[(H + I_{\bbR^n})^{-1}-(H_0 + I_{\bbR^n})^{-1}\big] \in \cB_1(L^2(\bbR^n;d^n x)),  
\lb{4.42} \\
& \ol{(H_0 + I_{\bbR^n})^{-1}V(H_0 + I_{\bbR^n})^{-1}}    \no \\
& \quad = \big[u (H_0 + I_{\bbR^n})^{-1}\big]^* \big[v (H_0 + I_{\bbR^n})^{-1}\big] 
\in \cB_1(L^2(\bbR^n;d^n x)).    \lb{4.43}
\end{align} 
More precisely, the fact that 
$u (H_0 + I_{\bbR^n})^{-1} \in \cB_2\big(L^2(\bbR^n; d^n x)\big)$ follows from an 
application of \cite[Theorem\ 4.1]{Si05}, and thus  
$u (H_0 + I_{\bbR^n})^{-1/2} \in \cB_4\big(L^2(\bbR^n; d^n x)\big)$ follows from the 
fact that $T \in \cB_4(\cH)$ if and only if $T^*T \in \cB_2(\cH)$. Indeed, one chooses for 
$T$ the operator $u(H_0 + I_{\bbR^n})^{-1/2}$ and defines the self-adjoint and unitary 
operator $S$ of multiplication by $\sgn(V(x))$ with $\sgn(V(x)) =1$ for $V(x) \geq 0$ a.e. 
and $\sgn(V(x)) = -1$ for $V(x) < 0$ a.e. 

Moreover, an application of 
\cite[Proposition\ 4.4]{Si05} then also proves that 
\begin{align}
\begin{split} 
 & u (H_0 + I_{\bbR^n})^{-1/2} \notin \cB_2\big(L^2(\bbR^n; d^n x)\big), \\
& \ol{u (H_0 + I_{\bbR^n})^{-1}v} \notin \cB_1\big(L^2(\bbR^n; d^n x)\big), \quad 
 n=2,3.  
 \end{split}
\end{align} 
Indeed, since $T^* T \in \cB_1(\cH)$ if and only if $T \in \cB_2(\cH)$, again choosing 
for $T$ the operator $u(H_0 + I_{\bbR^n})^{-1/2}$ proves that 
$u(H_0 + I_{\bbR^n})^{-1/2} \notin \cB_2\big(L^2(\bbR; d^n x)\big)$ since 
$(|p|^2 + 1)^{-1} \notin L^2(\bbR; d^n p)$ for $n=2,3$. 

This illustrates that even though \eqref{4.43} holds in dimensions $n = 2,3$ (just like 
it holds for $n = 1$, cf.\ \eqref{4.17d}), the use of the $2$-modified Fredholm determinant 
${\det}_{2, L^2(\bbR^n; d^n x)}(\cdot)$ in connection with formulas of the type 
\eqref{3.94} is inevitable in dimensions $n = 2, 3$ (as opposed to the case $n=1$). Thus, 
Hypothesis \ref{h3.9} needed to be extended to Hypothesis \ref{h3.16} in order to be 
able to handle the multi-dimensional cases $n = 2,3$. 

We also note in connection with \eqref{2.21} that 
\begin{align}
\eta' (z) &= {\tr}_{L^2(\bbR^n; d^n x)} 
\big(\ol{(H_0 - z I_{\bbR^n})^{-1}V(H_0 - z I_{\bbR^n})^{-1}}\big)     \no \\  
&= \begin{cases}
- \f{1}{4 \pi z} \int_{\bbR^2} d^2 x \, V(x), & n=2, \\
\f{i}{8 \pi z^{1/2}} \int_{\bbR^3} d^3 x V(x), & n=3,  
\end{cases}   
\end{align}
and hence (cf.\ also \cite{GLMZ05}),
\begin{align}
\xi(\lambda; H, H_0) &= \f{1}{\pi} \Im\big(\ln\big({\det}_{2, L^2(\bbR^n; d^n x)} 
\big(I_{\bbR^n} + \overline{u (H_0 - (\lambda + i0) I_{\bbR^n})^{-1} v}\big)\big)\big)    \no \\
& \quad + \begin{cases}
\f{1}{4 \pi} \int_{\bbR^2} d^2 x \, V(x), & \lambda > 0, \; n=2, \\[1mm]
\f{\lambda^{1/2}}{4 \pi^2} \int_{\bbR^3} d^3 x \, V(x), & \lambda > 0, \; n=3, \\[1mm]
0, & \lambda < 0, \; n=2,3, 
\end{cases}
\end{align}
in accordance with the normalization \eqref{2.9}. 

At this point it is possible to verify that each item in Hypothesis \ref{h3.16} applies 
and hence that Theorem \ref{t3.10}, Corollary \ref{c3.11}, Theorem \ref{t3.13}, and 
Corollaries \ref{c3.14} and \ref{c3.15} all hold in the context of Dirichlet boundary 
conditions at $\partial B_R$ for $n=2,3$. (We also refer to \cite{BGD88}, \cite{Ch84}, 
and the references cited therein, in the case $n = 2$, and to the detailed treatment of 
the case $n = 3$ in \cite[Chs.\ I--IV]{Si71}.) 

\medskip

\noindent 
$\mathbf{(III)}$ {\bf Possible generalizations.} 

Without providing full details, we will hint at various possible extensions including more general regions and other boundary conditions. 

In this context we find it convenient to recall some facts on positivity preserving (resp., 
improving) operators. Suppose $(X, \cA, \mu)$ is a $\sigma$-finite measure space and 
$\cK = L^2(X; d\mu)$ a complex, separable Hilbert space. Then $A \in\cB(\cK)$ is called 
{\it positivity preserving} (resp., {\it positivity improving}) if
\begin{equation}
0 \neq f \in \cK, \, f \geq 0 \, \text{$\mu$-a.e.} \text{ implies } \, A f \geq 0 \, 
\text{ (resp.\ $Af > 0$) $\mu$-a.e.}
\end{equation}
(We refer, e.g., to \cite{BKR80}, \cite{Da73}, \cite[Ch.\ 7]{Da80}, \cite{Fa72}, 
\cite[Sect.\ 8]{Fa75}, \cite{FS75}, 
\cite{Go77}, \cite{HSU77}, \cite{KR80}, \cite{Kr64}, \cite{KR50}, \cite{MO86}, \cite{Pe81}, 
\cite[Sct.\ XIII.12]{RS78}, \cite{Si73}, \cite{Si77a}, \cite{Si79}, and the references cited 
therein for the basics of this subject.) Positivity preserving (resp., improving) of $A$ will be denoted by 
\begin{equation}
A \succcurlyeq 0 \, \text{ (resp., $A \succ 0$).}
\end{equation}
(or by $0 \preccurlyeq A$ (resp., $0 \prec A$)). Similarly, if $A, B \in \cB(\cK)$, then  
\begin{equation}
A \succcurlyeq B \succcurlyeq 0 \, \text{ (resp., $A \succ B \succ 0$)}
\end{equation}
(or $0 \preccurlyeq B \preccurlyeq A$ (resp., $0 \prec B \prec A$)) imply that 
$A$, $B$, and $A - B$ are positivity preserving (resp., positivity improving). 

Considering the contraction semigroup $T(t) = e^{-t H}$, $ t\geq 0$, with $H\geq 0$ 
self-adjoint in $\cK$, one uses well-known relations
\begin{align}
& (H + \lambda I_{\cK})^{-1} = \int_0^{\infty} dt \, e^{- t \lambda} e ^{- t H}, 
\quad \lambda > 0,   \\
& \, e^{-t H} = \slim_{n \to \infty} \big[(t/n) H + I_{\cK}\big]^{-n}, \quad t \geq 0,
\end{align} 
to prove that 
\begin{align}
\begin{split}
& e^{-tH} \, \text{ is positivity preserving for all $t \geq 0$ if and only if} \\
& \quad (H + \lambda I_{\cK})^{-1} \, \text{ is positivity preserving for all $\lambda > 0$.}
\end{split} 
\end{align}
Analogous statements hold if $H\geq 0$ is replaced by $H \geq c I_{\cK}$ for some 
$c \in \bbR$.

Moreover, we note that if $A$ is an integral operator with integral kernel $A(x,x')$ 
for $d\mu \otimes d\mu$-a.e.\ $x, x' \in X$, and assuming $A(\cdot,\cdot) \in L^1(X\times X; d\mu \otimes d\mu)$, then  
\begin{equation}
A \succcurlyeq 0  \, \text{ if and only if } \, A(x,x') \geq 0 \, 
\text{ for $d\mu \otimes d\mu$-a.e.\ $x, x' \in X$,}
\end{equation}
with $d\mu \otimes d\mu$ denoting the product measure on $X \times X$. Clearly, 
$A \succ 0$ if $A(\cdot,\cdot) > 0$ $d\mu \otimes d\mu$-a.e. 

We also remark that these notions of positivity preserving (resp., improving) naturally extend 
to a two-Hilbert space setting in which one deals with a second Hilbert space $L^2(Y; d\nu)$ with $Y \subset X$ and $\wti \mu = \mu|_{Y}$, see, for instance, \cite{BKR80}, \cite{KR80}. This is also frequently done in connection with (nondensely defined) quadratic forms (cf., e.g., \cite[p.\ 61--62]{Da89}), and we will employ this notation in the following without further comment.  

First, we turn to the case of Dirichlet boundary conditions for general nonempty, open,  bounded sets $\Omega\subset \bbR^n$, $n \in \bbN$, rather than just balls 
$B_R \subset \bbR^n$, $R > 0$. In this context one recalls that the Dirichlet Laplacian 
$- \Delta_{\Omega,D}$ in $L^2(\Omega; d^n x)$, by definition, is the uniquely associated self-adjoint and strictly positive operator with the closure of the sesquilinear form 
\begin{equation} 
\int_{\Omega} d^n x \, \ol{(\nabla u)(x)} \cdot (\nabla v)(x),  
\quad f, g \in C_0^\infty(\Omega), 
\end{equation} 
with domain the Sobolev space $H^1_0(\Omega)$ (see, e.g., \cite[Sect.\ 1.8]{Da89}, 
\cite[Ch.\ VII]{EE89}, \cite[Sect.\ XIII.15]{RS78}). Domain monotonicity for the Dirichlet Laplacian then takes on the form (cf., \cite{BAH01}, \cite[Sect.\ 2.1]{Da89}, 
\cite[App.\ 1 to Sect.\ XIII.12]{RS78}) 
\begin{equation}
0 \preccurlyeq e^{- t (- \Delta_{\Omega_1,D})} 
\preccurlyeq e^{- t (- \Delta_{\Omega_2,D})}, \quad t \geq 0,  \lb{4.55}
\end{equation} 
or equivalently,
\begin{equation}
0 \preccurlyeq (- \Delta_{\Omega_1,D} + \lambda I_{\Omega_1})^{-1} 
\preccurlyeq (- \Delta_{\Omega_2,D} + \lambda I_{\Omega_2})^{-1} ,    \lb{4.56}
\quad \lambda > 0,  
\end{equation} 
assuming $\Omega_1 \subseteq \Omega_2 \subset \bbR^n$, $\Omega_j$ nonempty, 
open, and bounded, $j=1,2$. (Positivity preserving in \eqref{4.55} and \eqref{4.56} actually extends to positivity improving if $\Omega_j$ are connected, $j=1,2$, and 
$\Omega_1$ is strictly contained in $\Omega_2$ such that 
$- \Delta_{\Omega_1,D} \neq - \Delta_{\Omega_2,D}$, cf.\ \cite{KR80}).  
In particular, this yields the domain monotonicity of heat kernels,
\begin{equation}
0 \leq e^{- t (- \Delta_{\Omega_1,D})} (x,x') 
\leq e^{- t (- \Delta_{\Omega_2,D})} (x,x'), \quad t \geq 0, \; x, x' \in \Omega_1 
\subseteq \Omega_2,    \lb{4.57}
\end{equation} 
or equivalently, the domain monotonicity of Green's functions for Dirichlet Laplacians, 
\begin{equation}
0 \leq G_{0, \Omega_1,D} (- \lambda,x,x') 
\leq G_{0, \Omega_2,D} (- \lambda, x,x'), \quad \lambda > 0, \; x, x' \in \Omega_1 
\subseteq \Omega_2, \; x \neq x',    \lb{4.58} 
\end{equation} 
where $G_{0,\Omega,D}(z,x,x') = (- \Delta_{\Omega,D} - z I_{\Omega})^{-1}(x,x')$, 
$x, x' \in \Omega$, denotes the Green's function of (i.e., the integral kernel of the resolvent of) the Dirichlet Laplacian $- \Delta_{\Omega,D}$ in $L^2(\Omega; d^n x)$, $n \in \bbN$. 
We note that domain monotonicity of heat kernels as in \eqref{4.57} also follows from 
their representation in terms of Wiener measure (see, e.g., \cite{FS75}, \cite{Si78}).  

Gaussian upper and lower bounds on heat kernels have been studied very extensively in the literature (see, e.g., \cite{AtE97}, \cite{Da87}, \cite[Ch.\ 3]{Da89}, \cite{vdB90}, and the references therein). Here we just mention the rough Green's function estimate based on domain monotonicity, that is, on $\Omega \subset \bbR^n$ (see also 
\cite[Sect.\ 1.2]{Ke94}, \cite{vdB90}),
\begin{align}
& G_{0,\Omega,D}(-\lambda,x,x') \leq G_0 (-\lambda, x,x')    \no \\
& \quad = \f{1}{2 \pi} \bigg(\f{2\pi |x - x'|}{\lambda^{1/2}}\bigg)^{(2-n)/2} 
K_{(n-2)/2} (\lambda^{1/2} |x - x'|)     \no \\
& \quad \leq \begin{cases} C_{\lambda,\Omega,n} |x - x'|^{2-n}, & n \geq 3, \\
C_{\lambda,\Omega} \big|\ln\big(1 + |x-x'|^{-1}\big)\big|, & n=2,  \end{cases} 
\quad \lambda >0, \; x, x' \in \Omega, \; x \neq x',     \lb{4.59}
\end{align}
with $K_{\alpha}(\cdot)$ the modified irregular Bessel function of order $\alpha$ 
(cf.\ \cite[Sect.\ 9.6]{AS72}) and 
\begin{equation}
G_0(z,x,x') = (H_0 -z I_{\bbR^n})^{-1}(x,x'), \quad z \in \bbC\backslash [0,\infty), 
\; x, x' \in\bbR^n, \; x \neq x', \; n \in \bbN,
\end{equation}
the Green's function of the self-adjoint realization of $-\Delta$ in $L^2(\bbR^n; d^n x)$, 
\begin{equation}
H_0 = - \Delta, \quad \dom(H_0) = H^2(\bbR^n).  
\end{equation}
The estimate \eqref{4.59} ignores all effects of the boundary $\partial\Omega$ of 
$\Omega$, but it suffices for the purpose at hand (cf.\ \eqref{4.17a}, \eqref{4.28}). 

It is worth noting that these considerations extend to Schr\"odinger operators defined as 
form sums $- \Delta_{\Omega,D} +_q V$ in $L^2(\Omega; d^n x)$, assuming 
$0 \leq V \in L^1_{\loc}(\Omega; d^n x)$, $n \in \bbN$ (and also to additional, appropriately relatively form bounded, potentials), see \cite[Chs.\ 1--3]{Da89}. In addition, a 
Feynman--Kac approach to semigroups can be applied as long as $V_\pm$ belong 
to appropriate Kato classes (cf.\ \cite{AS82}, \cite{CZ95}, \cite{DvC00}). 

The case of Neumann boundary conditions, and more generally, that of Robin boundary conditions, is a bit more involved as domain monotonicity does not hold even for general 
convex domains (cf.\ \cite{BB93}, \cite{Hs94}) and a certain regularity of the boundary 
$\partial\Omega$ of $\Omega$ needs to be assumed. In addition, reflecting Brownian 
motion only works for special and sufficiently regular domains $\Omega \subset \bbR^n$. Still, one can proceed along the following lines, assuming $\Omega$ to be bounded and smooth, for simplicity. (The case of minimally smooth, that is, bounded Lipschitz domains, will be considered elsewhere \cite{GMN11}.) 

The Neumann sesquilinear form in $L^2(\Omega; d^nx)$ is given by 
\begin{equation}
\int_{\Omega} d^n x \, \ol{(\nabla u)(x)} \cdot (\nabla v)(x), \quad u,v \in H^1(\Omega),
\end{equation}
and its uniquely associated self-adjoint and nonnegative operator in 
$L^2(\Omega; d^n x)$ represents the Neumann Laplacian $- \Delta_{\Omega,N}$. The coresponding Neumann boundary condition then reads 
$(\partial u/\partial \nu)|_{\partial \Omega} = 0$, $u \in \dom(- \Delta_{\Omega,N})$, 
with $\nu$ the normal unit vector to $\partial \Omega$ and $\partial/\partial \nu$ 
denoting the normal derivative. 

Similarly, for $\theta \in C(\partial\Omega)$, the Robin sesquilinear form is of the type 
\begin{equation} 
\int_{\Omega} d^n x \,\ol{(\nabla u)(x)}\cdot(\nabla v)(x)
+ \int_{\partial\Omega} d^{n-1} \omega (\xi) \, \theta (\xi) \ol{u(\xi)} v(\xi),
\quad u, v \in H^1(\Omega),
\end{equation}
with $d^{n-1} \omega$ the surface measure on $\partial \Omega$ (cf.\ \cite{GM09} for 
more details). The uniquely associated self-adjoint operator in $L^2(\Omega; d^n x)$ then represents the Robin Laplacian $- \Delta_{\Omega,\theta}$. The corresponding Robin boundary condition is of the form 
 $(\partial u/\partial \nu)|_{\partial \Omega} + \theta \, u|_{\partial \Omega} = 0$, 
 $u \in \dom(- \Delta_{\Omega,\theta})$. 

Assuming $0 \leq \theta_1(\xi) \leq \theta_2(\xi)$, $\xi \in \partial\Omega$ and 
$\Omega_1 \subseteq \Omega_2$ one then has the positivity preserving relations 
proved in \cite{BKR80}, \cite{KR80} (see also \cite[p.\ 22]{BS05})
\begin{align}
\begin{split} 
0 & \preccurlyeq e^{- t (- \Delta_{\Omega_1,D})} 
\preccurlyeq e^{- t (- \Delta_{\Omega_2,D})}    \\ 
& \preccurlyeq e^{- t (- \Delta_{\Omega_2,\theta_2})} 
\preccurlyeq e^{- t (- \Delta_{\Omega_2,\theta_1})}
\preccurlyeq e^{- t (- \Delta_{\Omega_2,N})}, 
\quad t \geq 0,    \lb{4.64}
\end{split} 
\end{align} 
or equivalently,
\begin{align}
0 & \preccurlyeq (- \Delta_{\Omega_1,D} + \lambda I_{\Omega_1})^{-1} 
\preccurlyeq (- \Delta_{\Omega_2,D} + \lambda I_{\Omega_2})^{-1}   \no \\
& \preccurlyeq (- \Delta_{\Omega_2,\theta_2} + \lambda I_{\Omega_2})^{-1} 
\preccurlyeq (- \Delta_{\Omega_2,\theta_1} + \lambda I_{\Omega_2})^{-1}
\preccurlyeq (- \Delta_{\Omega_2,N} + \lambda I_{\Omega_2})^{-1}, 
\quad \lambda > 0.     \lb{4.65}
\end{align}  
(Again, positivity preserving in \eqref{4.64} and \eqref{4.65} actually extends to positivity improving if $\Omega_j$ are connected, $j=1,2$, $\Omega_1$ is strictly contained in 
$\Omega_2$ such that $- \Delta_{\Omega_1,D} \neq - \Delta_{\Omega_2,D}$, and 
$\theta_1 \neq \theta_2$ such that 
$- \Delta_{\Omega_2,\theta_1} \neq - \Delta_{\Omega_2,\theta_2}$, cf.\ \cite{KR80}).

Relations \eqref{4.64} and \eqref{4.65} then yield analogous pointwise bounds on heat kernels and Green's functions to those in \eqref{4.57} and \eqref{4.58} and we note 
that Gaussian upper bounds for Neumann heat kernels are 
available in the literature (see, e.g., \cite{AtE97}, \cite[Theorem\ 3.2.9]{Da89}).

We conclude by noting that in dimensions $n \geq 3$, suitably higher modified Fredholm determinants ${\det}_{p, L^2(\bbR^n; d^n x)}(\cdot)$, $p = p(n) \in\bbN$, must be applied.
This is discussed in some detail in \cite{Ya07}, \cite[Ch.\ 9]{Ya10}, and under somewhat different assumptions on $V$ (sign definiteness of $V$, but otherwise more general 
local singularities of $V$ are permitted) in \cite[Sect.\ 1.6]{Ko99}. It is possible to remove 
the sign definiteness assumptions on $V$ (as discussed in parts $\mathbf{(I)}$ and 
$\mathbf{(II)}$, see also \cite[Theorem\ 1.61]{Ko99} for $1 \leq n \leq 3$). Moreover, one should establish the connection with higher-order spectral shift functions (the Koplienko spectral shift function for Hilbert--Schmidt class perturbations $\cB_2(L^2(\bbR^n; d^nx))$ 
and its recent extension to $\cB_p(L^2(\bbR^n; d^nx))$-perturbations, $p\in \bbN$, 
$p \geq 3$, derived in \cite{PSS09} (see also \cite{AP11}). This lies beyond the scope of 
this paper and will be taken up elsewhere.    

\medskip

\noindent {\bf Acknowledgments.}
We are indebted to Barry Simon for helpful discussions. 


\end{document}